\tikzstyle{bull}=[circle,draw=black,fill=black!80]
\tikzstyle{holl}=[circle,draw=black]
\def\MT@register@subst@font{\MT@exp@one@n\MT@in@clist\font@name\MT@font@list
   \ifMT@inlist@\else\xdef\MT@font@list{\MT@font@list\font@name,}\fi}
\def\@listi{\leftmargin\leftmargini
              \parsep 0\p@ \@plus2\p@
              \topsep 2\p@ \@plus1\p@
              \itemsep 2\p@ \@plus1\p@}
\let\@listI\@listi
\newtheorem{theorem}{Theorem}[section]
\newtheorem{lemma}[theorem]{Lemma}
\newtheorem{proposition}[theorem]{Proposition}
\newtheorem{corollary}[theorem]{Corollary}
\theoremstyle{definition}
\newtheorem{definition}[theorem]{Definition}
\newtheorem{example}[theorem]{Example}
\newtheorem{remark}[theorem]{Remark}
\newcommand{\la}{\langle}
\newcommand{\ra}{\rangle}
\newcommand{\Al}[1][A]{\ensuremath{\mathbf{#1}} }
\newcommand{\B}{\mathcal{B}}
\renewcommand{\phi}{\varphi}
\newcommand{\A}{\Al}
 \newcommand{\Fm}{\Al[Fm]}
 \newcommand{\K}{\mathsf{K}}
 \newcommand{\V}{\mathsf{V}}
 \newcommand{\Matr}{\mathsf{Matr}}
 \newcommand{\Alg}{\mathsf{Alg}}
 \newcommand{\imp}{\mathbin\to}
  \newcommand{\ISL}{\mathcal{IS}_{\leq}}
 \let\Omega=\varOmega
 \let\Gamma=\varGamma
 \let\Lambda=\varLambda
 \let\Sigma=\varSigma
 \let\Psi=\varPsi
 \let\Delta=\varDelta
 \let\Pi=\varPi
 \let\Theta=\varTheta
\newcommand{\Leibniz}{{\boldsymbol{\varOmega}}}
 \DeclareMathOperator{\Log}{Log}
\newcommand{\CMt}{\mathcal{M}}
\newcommand{\tuple}[1] {{\langle #1\rangle}}
\let \ders \vdash  
\let \Lo \vdash
\let \derm \vartriangleright
\newcommand{\sub}  {{\mathsf{sub}}}
\newcommand{\ISA}{\mathsf{IS}}
 \newcommand{\HO}{\mathbb{H}}
 \newcommand{\PR}{\mathbb{P}}
 \newcommand{\SU}{\mathbb{S}}
  \newcommand{\II}{\mathbb{I}}
  \newcommand{\VV}{\mathbb{V}}
  \newcommand{\QQ}{\mathbb{Q}}
\newcommand{\Mt}{\mathbb{M}}
\newcommand{\nnot}{\mathop{\sim}}                  
\let\imp\Rightarrow                                
\newcommand{\alg}[1]{\mathbf{#1}}                  
\let\epsilon=\Phi
\newcommand{\eval}[2][\right]{\relax
   \ifx#1\right\relax \left.\fi#2#1\rvert}
\newcommand{\ttop} {\hat{1}}
\newcommand{\bbot} {\hat{0}}
\newcommand{\Ax}             {\mathsf{Ax}}
\newcommand{\R}             {\mathsf{R}}
\newcommand{\Val}{\textrm{Val}}
\begin{document}

\title{Logics of involutive Stone algebras}

\author{S\'ergio Marcelino}
\address{SQIG - Instituto de Telecomunica\c{c}\~oes, Portugal\\
Departamento de Matem\'atica, Instituto Superior T\'ecnico, Universidade de Lisboa
}
\email{smarcel@math.tecnico.ulisboa.pt}
%

\author{ Umberto Rivieccio$^*$}
\address{Departamento de Inform\'atica e Matem\'atica Aplicada,
Universidade Federal do Rio Grande do Norte,
Natal (RN), Brasil}
\email{urivieccio@dimap.ufrn.br, phone: +55 84 3215 3814}
%

\date{\today}

\maketitle

\begin{abstract}
An involutive Stone algebra (IS-algebra) is a  structure that is simultaneously 
a De Morgan algebra and a Stone algebra (i.e.~a pseudo-complemented distributive lattice
satisfying the well-known Stone identity, $\nnot x \lor \nnot \nnot x \approx 1$).
IS-algebras have been studied algebraically and topologically since the 1980's, but a corresponding 
logic  (here denoted $\ISL$) has been  introduced only very recently. 
The logic $\ISL$ is the departing point for the present study, which we then extend to a wide family of previously unknown
logics
defined from IS-algebras.
We show that $\ISL$ is a conservative expansion of the Belnap-Dunn four-valued
logic (i.e.~the order-preserving logic of the variety of De Morgan algebras), and we give a finite 
Hilbert-style axiomatization for it. 
More generally, we introduce a 
method for expanding conservatively
every super-Belnap logic 
so as to obtain an extension of  $\ISL$.
We show that every logic thus defined can be axiomatized by adding a fixed
finite set of rule schemata to the corresponding super-Belnap base logic.
We also consider a few sample extensions of $\ISL$ that cannot be obtained in the above-described way,
but can nevertheless be axiomatized finitely by other methods.
Most of our axiomatization results are obtained in two steps:  through a multiple-conclusion
calculus first, which we then reduce to a traditional one. 
The multiple-conclusion axiomatizations introduced
in this process, being analytic, are  of independent interest from a proof-theoretic standpoint.
Our results entail that the lattice of super-Belnap logics (which is known to be uncountable)
embeds into the lattice of extensions of $\ISL$. Indeed, as in the super-Belnap case, we establish 
that  the \emph{finitary} extensions of $\ISL$ are already  uncountably many. 
\end{abstract}

\section{Introduction}
\label{sec:intro}

Involutive Stone algebras (from now on, IS-algebras) were first considered in the papers~\cite{CiGa81,CiGa83} 
within a study of finite-valued \L ukasiewicz logics and,
more specifically, in connection with the algebraic structures nowadays known as
\L ukasiewicz-Moisil algebras. The term `involutive' is due to the observation that every
IS-algebra 
has  a primitive negation operation $\nnot$ that satisfies the involutive law ($\nnot \nnot x \approx x$),
whereas 
`Stone' refers to the existence of a term-definable
pseudo-complement
 operation $\neg$ that 
satisfies the
well-known Stone identity $\neg x \lor \neg \neg x \approx 1 $.
From an algebraic point of view, 
IS-algebras
are a variety of 
De Morgan algebras endowed with an additional unary operation
(here denoted by $\nabla$);
alternatively,
 IS-algebras can  be viewed as the subclass of  De Morgan algebras that
satisfy certain structural properties  ensuring the definability of $\nabla$.
The structural   connection between De Morgan  
and IS-algebras 
 will indeed play a prominent role in the present paper.

De Morgan algebras form a variety  that is well known
in non-classical logic as the algebraic counterpart of~$\B$, the four-valued Belnap-Dunn logic~\cite{Be77,F97}.
From a logical point of view, $\B$ can be viewed as a weakening of classical two-valued logic designed to allow for both \emph{paraconsistency}
(in that $\mathcal{B}$ does not validate the rule $p \land \nnot p \vdash q$, known as  \emph{ex contradictione quodlibet})
and \emph{paracompleteness} (in that $\mathcal{B}$ does not validate the principle of excluded middle, $\vdash p \lor \nnot p$). 
De Morgan algebras 
can thus be viewed as a generalization of a Boolean algebras 
on which the operation $\nnot$ (that interprets the negation connective) need not be a Boolean complement,
i.e.~
the classical laws $x \land \nnot x \approx 0$  and $x \lor \nnot x \approx 1$ need not be satisfied.
The involutive  and the De Morgan laws   are however
valid on every De Morgan algebra (see Definition~\ref{def:dm}). 

As mentioned above,  involutive Stone algebras may be regarded as a  subclass of De Morgan algebras
characterized by certain structural properties. From this perspective,
an involutive Stone algebra may be viewed as a De Morgan algebra $\Al$ having an additional unary
operation $\nabla$ that receives an arbitrary element $a \in A$ as argument
and outputs a certain `classical' element $\nabla a$ (i.e.~an element that possesses a Boolean complement in $\Al$).
Just as not every distributive lattice can be equipped with a Boolean complement operation, so 
not every De Morgan algebra 
can be endowed with an operation $\nabla$  meeting the above requirement.
However, if such an operation 
is definable, 
then it is unique. 

One might say that,
on every
De Morgan algebra $\Al$,
the behaviour of $\nabla$ 
provides 
a measure of how far  $\Al$ is from being Boolean:
the limit cases being, at  one end of the spectrum, Boolean algebras themselves
 (on which $\nabla$ is the identity map) and, at the other, the algebras (such as those depicted in Figure~\ref{fig:1}) where the 
 only Boolean elements are the top and the bottom.
These structural requirements on  $\nabla$ can be completely captured by means of identities~\cite[Thm.~2.1]{CiGa83}.
Therefore, regardless of the preceding considerations, IS-algebras can be simply introduced as a variety of
De Morgan algebras
having an extra unary operation $\nabla$ that is required to satisfy four additional identities
(see Definition~\ref{def:isa}).

A logic associated to IS-algebras has been considered for the first time in~\cite{Ca-MSc,CaFi18}. 
In the present paper, we shall denote  this logic by $\ISL$, suggesting 
that
$\ISL$ is the order-preserving logic canonically associated to the variety of  IS-algebras (see Section~\ref{sec:prel} for the relevant definitions). As we will show, 
$\ISL$ is a conservative expansion of the Belnap-Dunn logic $\B$, 
which is itself the order-preserving logic of the variety of De Morgan lattices.
We are moreover going to prove that,
between the logics extending $\B$ (known as \emph{super-Belnap logics} since the paper~\cite{Ri12b}) and 
the extensions of $\ISL$,  a connection can be established and exploited
in order to obtain a number of non-trivial results.

The background facts we shall need on the Belnap-Dunn logic can be found in~\cite{F97}, including
a complete Hilbert-style axiomatization and a characterization of the reduced matrix models (see  Section~\ref{sec:prel}).
For further information on super-Belnap logics, we refer the reader to the papers~\cite{Ri12b,AlPrRi16,Adam}, from which
we shall also import a few results as needed. 

The rest of the paper is organized as follows.
Section~\ref{sec:prel} introduces the generic algebraic and logical notions that will be used in the following ones.
 Section~\ref{sec:isal} contains the basic algebraic results on De Morgan and involutive Stone algebras.
In Section~\ref{sec:sem} we look at the logic $\ISL$ of involutive Stone algebras from a semantical point of view. 
We observe that  $\ISL$ non-protoalgebraic (Proposition~\ref{prop:nonprot}) and
can be characterized by a single finite matrix (Proposition~\ref{prop:onemat}).
We further introduce a simple operation on logical matrices that allows us to associate, 
to any given super-Belnap logic, a logic
extending
$\ISL$ is such a way that the latter is a conservative expansion of the former
(Lemma~\ref{lem:consex}). This entails that the lattice of super-Belnap logics
is embeddable into the lattice of extensions of $\ISL$ (Corollary~\ref{cor:latlog}),
which in turn tells us that the latter must have at least the cardinality of the continuum (Corollary~\ref{cor:cardlat}).

In Section~\ref{sec:ax} we present a uniform method of axiomatizing all the extensions of
$\ISL$ that are defined from super-Belnap logics via the construction introduced in Section~\ref{sec:sem}
(Corollary~\ref{cor:wck}, Proposition~\ref{prop:axnablasuperbelnap});
a complete axiomatization for $\ISL$ is  thus obtained as a special application 
(Example~\ref{ex:logis}).
In order to achieve these results, we take a little detour through
the realm of multiple-conclusion logics and the calculi that correspond to them.
%
In Subsection~\ref{ss:addsb}
a number of logics extending $\ISL$ (corresponding to substructures of the
matrix that defines $\ISL$) are axiomatized by a uniform application of the general method;
these include logics obtained by adding the $\nabla$ connective
to well-known extensions of $\B$, such as G.~Priest's \emph{Logic of Paradox} 
and the strong and weak three-valued logics due to S.~C.~Kleene.
In Subsection~\ref{ss:oth} we axiomatize a few extensions of $\ISL$ are not obtained
in this way 
from a super-Belnap logic, among which we find 
the three-valued \L ukasiewicz(-Moisil) logic.
For the latter results we 
cannot apply the above-mentioned method, so we need to take a longer detour through
multiple-conclusion logics and analytical calculi (Subsection~\ref{ss:anal}). 
Finally, Section~\ref{sec:conc} contains a few concluding remarks and suggestions
for future research.

\section{Algebraic and logical preliminaries}
\label{sec:prel}

In this Section we recall the main algebraic and logical notions that will be needed in the following ones.
We assume familiarity with basic results of lattice theory~\cite{DaPr90}, universal algebra~\cite{BuSa00} and the general theory of 
logical calculi~\cite{W88,FJa09,F16}. 

We shall denote by $\A, \alg{B}$ etc.~algebras over a given algebraic similarity type $\Sigma$.
The set of $\Sigma$-homomorphisms between two algebras $\A$  and $\alg{B}$
will be denoted by  $\mathsf{Hom}(\A,\mathbf{B})$.
Given $\Sigma$-algebras $\A, \alg{B}$ and a sub-signature $\Sigma' \subseteq \Sigma$,
we denote by $\mathsf{Hom}_{\Sigma'}(\A,\mathbf{B})$ the set of functions
$h \colon A \to B $ that are only required to preserve the operations in $\Sigma'$.
The algebra of formulas over a signature $\Sigma$ will be denoted by $\alg{Fm}_{\Sigma}$
(or simply by $\alg{Fm}$, if $\Sigma$ is clear from the context), and its elements by $\phi, \psi$ etc.
Given a class $\K$ of similar algebras, we denote by $\II (\K)$, $\HO (\K)$, $\SU (\K)$, $\PR (\K)$,
$\PR_S (\K)$
 the classes formed by closing
$\K$ under (respectively), isomorphisms, homomorphisms, subalgebras, direct products and subdirect products. 
A \emph{variety}  is a class $\K$ of algebras that is closed under $\HO, \SU, \PR$, or, equivalently,
that is definable by means of algebraic identities.
A \emph{quasivariety} is a class $\K$ of algebras that is definable by means of quasi-identities, that is, implications having a finite
number of identities as premiss and a single identity as conclusion. 
The variety (resp.~quasivariety) generated by 
$\K$ will be denoted by
$\VV(\K)$ and $\QQ(\K)$. Every variety $\V$ is generated by the class $\V_{si}$ of its
subdirectly irreducible members, defined as follows: an algebra $\A$ is \emph{subdirectly irreducible} 
if $\A$ has a minimum congruence above the identity relation (as a special case, we say that $\A$ is  \emph{simple} if $\A$ has
exactly two congruences).
For every variety $\V$, we have 
$\V = \VV ( \V_{si}) = \II \PR_S (\V_{si})$.

We view a  \emph{(propositional, single-conclusion) logic} as a structural consequence relation on $\wp (Fm) \times Fm$ (see e.g.~\cite[Def.~1.5]{F16}).
The symbol $\Lo$ 
will be used to denote arbitrary logics.

We say that a logic $\Lo_2$ is an \emph{extension} of $\Lo_1$ when both logics share the same propositional language $\Sigma$
and ${\Lo_1} \, \subseteq \, {\Lo_2}$. The family of all extensions of a logic $\Lo$ forms a complete lattice (in which the meet is the intersection); in this paper we shall be concerned with the lattice of extensions of the logic $\ISL$ of involutive Stone algebras, and will relate it to the lattice of extensions of the Belnap-Dunn logic $\mathcal{B}$ (i.e.~the super-Belnap logics). 
We say that a logic $\Lo_2$ over a language $\Sigma_2$ is an \emph{expansion} of a logic $\Lo_1$ over  $\Sigma_1$ when $\Sigma_1 \subseteq \Sigma_2$ and 
${\Lo_1} \, \subseteq \, {\Lo_2}$. We speak of a \emph{conservative expansion} when
both consequence relations coincide on the formulas over $\Sigma_1$. 

A \emph{(logical) matrix} is a pair
$\Mt = \la \A, D \ra$ where $\A$ is an algebra
and $D \subseteq A$ is a subset of \emph{designated elements}.
One  defines the notions of isomorphism, homomorphism, submatrix and product of matrices 
as straightforward extensions of the corresponding universal algebraic constructions (see~\cite{W88,FJa09} for details). 
Given a matrix $\Mt = \la \A, D \ra$ with  $\A$  a $\Sigma$-algebra, 
we let
$\Val(\Mt)=\mathsf{Hom}(\Fm_{\Sigma},\mathbf{A})$.
 We denote by $\Log$ the mapping that associates a logic to a class of matrices in the standard fashion.
Indeed, each matrix determines a logic 
(denoted $\Log{\Mt}$ or $\vdash_{\Mt}$) as follows: for all
$\Gamma \cup \{ \phi \} \subseteq Fm$,
one lets
$\Gamma \vdash_{\Mt} \phi $
iff,
for every valuation $v \colon \Al[Fm] \to \A$,
$v[\Gamma ] = \{ v(\gamma) : \gamma \in \Gamma \} \subseteq D$ entails $v(\phi) \in D$. 
To a class of matrices
$\CMt = \{ \Mt_i : i \in I \}$,
we associate the logic
$\Log {\CMt}= {\ders_\CMt} : = \bigcap \{\ders_{\Mt_i} : i \in I \} $.
We say that a matrix $\Mt$ is a \emph{model} of a logic $\ders$
when 
${\Lo} \subseteq {\ders_\Mt}$,
that is,
when $\Gamma \vdash  \phi$ entails $\Gamma \vdash_{\Mt} \phi $, for all 
$\Gamma \cup \{ \phi \} \subseteq Fm$.

Every matrix $\Mt = \la \A, D \ra$
has an associated 
 \emph{Leibniz congruence} $\Leibniz_{ \alg{A}} ( D) $,
 which is
 the greatest congruence on $\A$ that is compatible with $D$ in the following sense:
 for all $a \in D$ and $b \in A$, if $\la a,b \ra \in \Leibniz_{ \alg{A}} ( D)$,
 then $b \in D$. This property allows one to define the quotient matrix
 $\Mt^* = \la \A / {\Leibniz_{ \alg{A}} ( D)} , D  / {\Leibniz_{ \alg{A}} ( D)} \ra$,
 which is known as the \emph{reduction} of  $\Mt$.
 A matrix $\Mt$ is \emph{reduced} if $\Leibniz_{ \alg{A}} ( D)$ is the identity relation,
 so no further reduction is possible. Reduced matrices are important in the study of algebraic models
 of logics, because, for every matrix $\Mt$, one has 
 $\Log \Mt = \Log \Mt^*$.
 It follows that every logic coincides with the logic determined by the class of all its reduced matrix models. 
 In algebraic logic, two classes of algebras, $\Alg^* (\Lo)$ and $\Alg (\Lo)$,
are traditionally associated to a given logic $\Lo$. 
The former is defined as follows:
$
\Alg^* (\Lo) := \{ \A :  \text{ there is } D \subseteq A \text{ such that } \la  \A, D \ra \text{ is a reduced matrix for } \Lo\}
$
By the characterization of~\cite[Thm.~2.23]{FJa09}, 
the  latter class can be introduced as follows $\Alg (\Lo) := \PR_S (\Alg^* (\Lo) )$.

Let $\K$ be a class of algebras such that each $\A \in \K$ has a semilattice
reduct with top element $1$.
From $\K$ one  can obtain a finitary logic $ \vdash^{\leq}_{\K} $ as follows.
One lets $\emptyset \vdash^{\leq}_{\K}  \phi$ if and only if
the identity $\phi \approx 1$ is valid in $\K$ and,
for all $\Gamma \cup \{ \phi \} \subseteq Fm$ such that $\Gamma \neq \emptyset$, 
one lets
$
\Gamma \vdash^{\leq}_{\K} \phi
$
iff
there are 
$\gamma_1, \ldots, \gamma_n \in \Gamma$
such that 
the identity 
$  \gamma_1 \land \ldots \land \gamma_n \land \phi \approx \phi $
is valid in $\K$. 
We shall call $\vdash^{\leq}_{\K} $
the \emph{order-preserving logic} associated to $\K$.
Observe that, by definition, $\K$ and $\VV(\K)$ define the same logic;
thus, the order-preserving logics considered in the literature are usually associated
 to varieties
of semilattice-based algebras.
We note that, if $\V$ is a variety of algebras having a bounded distributive lattice reduct (as will always be the case in the present
paper), then $\vdash^{\leq}_{\V} $ coincides with the logic defined by the class of matrices
$\{ \la \A, F \ra  :  \A \in \V, \, F \subseteq A \text{ is a (non-empty) lattice filter of } \A 
\}$.

%
%
%


\section{De Morgan and involutive Stone algebras}
\label{sec:isal}

In this Section we recall the main
definitions and basic results on the classes of algebras involved. 

\begin{definition}
\label{def:dm}
A \emph{
De Morgan lattice} is an algebra 
$\Al = \la A; \land, \lor, \nnot \ra$ of type $\la 2,2,1 \ra$
such that $\la A; \land, \lor \ra$ is a distributive lattice and  the following identities are satisfied: 
\begin{enumerate}[(DM1)]
\item \label{Itm:D1} $\nnot (x \lor y) \approx  \nnot x \land \nnot y$. 
\item \label{Itm:D2} $ \nnot (x \land y) \approx  \nnot  x \lor  \nnot y$. 
\item \label{Itm:D3} $ x \approx \nnot  \nnot x$. 
\end{enumerate}
A  \emph{
De Morgan algebra} is a 
De Morgan lattice whose lattice reduct is bounded (thus we include constant symbols $\bot$ and $\top$ in the algebraic signature) and satisfies
the following identities: 
$\nnot \top \approx \bot$ and $\nnot \bot \approx \top$.
\end{definition}

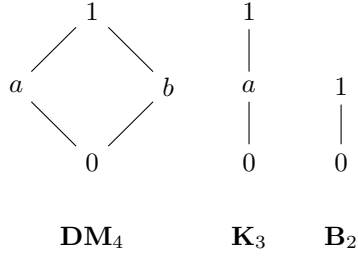
\begin{figure}[h]
\caption{(All the)  subdirectly irreducible De Morgan algebras.}
\label{fig:2}

\bigskip

\begin{center}
\begin{tikzpicture}[scale=1,
  dot/.style={circle,fill,inner sep=1.6pt,outer sep=2pt}]
  \node (K6b) at (0,0) {$0 $};
  \node (K6d) at (1,1) {$b$};
    \node (K6g) at (0,2) {$1$};
            \node (K6i) at (-1,1) {$a$};
    \draw[-] (K6d) -- (K6g);
    \draw[-] 
    (K6b) -- (K6d);
    \draw[-] (K6b) -- (K6i) -- (K6g);
  \node at (0,-1) {$\alg{DM}_4$};
\end{tikzpicture}
\hspace{8pt}
\begin{tikzpicture}[scale=1,
  dot/.style={circle,fill,inner sep=1.6pt,outer sep=2pt}]
  \node (K6b) at (1,0) {$0 $};
    \node (K6g) at (1,2) {$1$};
            \node (K6i) at (1,1) {$a$};
    \draw[-] (K6b) -- (K6i) -- (K6g);
  \node at (1,-1) {$\alg{K}_3$};
\end{tikzpicture}
\hspace{8pt}
\begin{tikzpicture}[scale=1,
  dot/.style={circle,fill,inner sep=1.6pt,outer sep=2pt}]
  \node (K6b) at (1,0) {$0 $};
        \node (K6h) at (1,1) {$1$};
  \draw[-] (K6b) -- (K6h);
    ;
  \node at (1,-1) {$\alg{B}_2$};
\end{tikzpicture}
\end{center}
\end{figure}

Figure~\ref{fig:2} depicts the (only) subdirectly irreducible De Morgan algebras.
On each algebra, the lattice operations are determined by the diagram.
The negation is defined on $\alg{DM}_4$ by $\nnot 0 = 1$, $\nnot 1 = 0$,
$\nnot a = a$ and $\nnot b = b$. These prescriptions apply to 
$\alg{K}_3$ and $\alg{B}_2$ as well viewed as subalgebras of $\alg{DM}_4$.
Obviously $\alg{B}_2$ is the two-element Boolean algebra, and 
$\alg{K}_3$ is the three-element  Kleene algebra
associated to the three-valued logics originating from the work of S.C.~Kleene\footnote{Formally,
a \emph{Kleene lattice (algebra)} is defined as a De Morgan lattice (algebra)
that satisfies $x \land \nnot x \leq y \lor \nnot y$. It is well known that the 
variety of Kleene lattices (algebras)
is 
$\VV (\alg{K_3})$.}. 

\begin{definition}
\label{def:isa}
An \emph{
involutive Stone algebra} (IS-algebra) is an algebra 
$\Al = \la A; \land, \lor, \nnot, \nabla, 0, 1 \ra$ of type $\la 2,2,1,1,0,0 \ra$
such that $ \la A; \land, \lor, \nnot, \bot, \top \ra$ is a De Morgan algebra and
 the following identities are satisfied: 
\begin{enumerate}[({I}S1)]
\item \label{Itm:IS1} $\nabla \bot \approx  \bot$. 
\item \label{Itm:IS2} $ x   \approx  x \land \nabla x$. 
\item \label{Itm:IS3} $\nabla( x \land y) \approx  \nabla x \land \nabla y $. 
\item \label{Itm:IS4} $\nnot \nabla x \land \nabla x \approx  \bot $. 
\end{enumerate}
\end{definition}

The class of IS-algebras will be denoted $\ISA$.
The name `involutive Stone algebras' is motivated by the following observation. For every
IS-algebra
 $\Al = \la A; \land, \lor, \nnot, \nabla, \bot, \top \ra$, 
 the operation $\neg$ that realizes  the term $\neg x := \nnot \nabla x$ 
  is a pseudo-complement;
moreover, $\Al$ satisfies the so-called Stone identity $\neg x \lor \neg \neg x \approx \top$. Hence,
$\la A; \land, \lor, \neg, \bot, \top \ra$ is a Stone algebra\footnote{Formally, a \emph{Stone algebra} can be defined as
a bounded distributive lattice $\la A; \land, \lor, \neg, \bot, \top \ra$ endowed by an extra unary operation $\neg$
that satisfies, for all $a,b \in A$, the following requirements: (i) $a \land b = 0$ iff $a \leq \neg b$, and (ii) $\neg a \lor \neg \neg a = \top$.}.
Conversely, given an algebra $\la A; \land, \lor, \nnot, \neg, \bot, \top \ra$  that has both a De Morgan negation and a pseudo-complement
operation, upon defining $\nabla x : = \neg \neg x $, 
one has that $\la A; \land, \lor, \nnot, \nabla, 0, 1 \ra$ is an IS-algebra if and only if the following
identity is satisfied:
$\neg x = \nnot \neg \neg x $
~\cite[Remark~2.2]{CiGa83}.

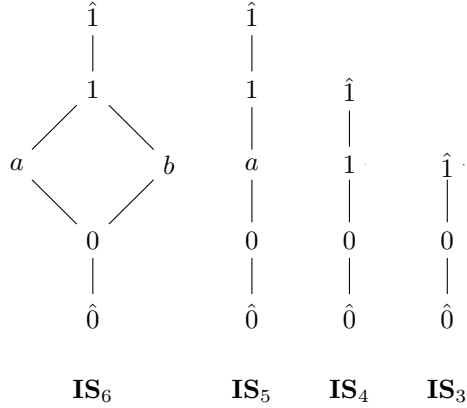
\begin{figure}[h]
\caption{(All the)  subdirectly irreducible IS-algebras.}
\label{fig:1}

\bigskip

\begin{center}

\begin{tikzpicture}[scale=1,
  dot/.style={circle,fill,inner sep=1.6pt,outer sep=2pt}]
  \node (K6a) at (0,-1) {$\bbot$};
  \node (K6b) at (0,0) {$0 $};
  \node (K6d) at (1,1) {$b$};
    \node (K6g) at (0,2) {$1$};
        \node (K6h) at (0,3) {$\ttop$};
            \node (K6i) at (-1,1) {$a$};
  \draw[-] (K6g) -- (K6h);
    \draw[-] (K6d) -- (K6g);
    \draw[-] (K6a) -- (K6b) -- (K6d);
    \draw[-] (K6b) -- (K6i) -- (K6g);
  \node at (0,-2) {$\alg{IS}_6$};
\end{tikzpicture}
\hspace{8pt}
\begin{tikzpicture}[scale=1,
  dot/.style={circle,fill,inner sep=1.6pt,outer sep=2pt}]
  \node (K6a) at (1,-1) {$\bbot$};
  \node (K6b) at (1,0) {$0 $};
    \node (K6g) at (1,2) {$1$};
        \node (K6h) at (1,3) {$\ttop$};
            \node (K6i) at (1,1) {$a$};
  \draw[-] (K6g) -- (K6h);
    \draw[-] (K6a) -- (K6b) 
    ;
    \draw[-] (K6b) -- (K6i) -- (K6g);
  \node at (1,-2) {$\alg{IS}_5$};
\end{tikzpicture}
\hspace{8pt}
\begin{tikzpicture}[scale=1,
  dot/.style={circle,fill,inner sep=1.6pt,outer sep=2pt}]
  \node (K6a) at (1,-1) {$\bbot$};
  \node (K6b) at (1,0) {$0$};
    \node (K6g) at (1,1) {$1$};
        \node (K6h) at (1,2) {$\ttop$};
  \draw[-] (K6g) -- (K6h);
    \draw[-] (K6a) -- (K6b) 
    ;
    \draw[-] (K6b) -- (K6i) -- (K6g);
  \node at (1,-2) {$\alg{IS}_4$};
\end{tikzpicture}
\hspace{8pt}
\begin{tikzpicture}[scale=1,
  dot/.style={circle,fill,inner sep=1.6pt,outer sep=2pt}]
  \node (K6a) at (1,-1) {$\bbot$};
  \node (K6b) at (1,0) {$0 $};
        \node (K6h) at (1,1) {$\ttop$};
  \draw[-] (K6g) -- (K6h);
    \draw[-] (K6a) -- (K6b) 
    ;
    \draw[-] (K6b) -- (K6i) -- (K6g);
  \node at (1,-2) {$\alg{IS}_3$};
\end{tikzpicture}
\end{center}
\end{figure}

The variety of IS-algebras is generated by the six-element algebra $\alg{IS}_6$, which is shown in Figure~\ref{fig:1}
together with its subalgebras $\alg{IS}_5$, $\alg{IS}_4$ and $\alg{IS}_3$. 
Our notation reflects the observation that 
the De Morgan algebra reduct of $\alg{IS}_6$ is obtained by adjoining a new top $\ttop$ and a new
bottom $\bbot$ element to the De Morgan algebra $\alg{DM}_4$, and by extending the De Morgan operations
in the obvious way (in particular, $\nnot \ttop = \bbot$ and $\nnot \bbot = \ttop$); cf.~Proposition~\ref{prop:newtop}.
Likewise, $\alg{IS}_5$ is obtained from $\alg{K_3}$, 
and $\alg{IS}_4$ from $\alg{B_2}$ (we can view $\alg{IS}_3$ as obtained in the same way if we start from the 
one-element trivial  De Morgan algebra); this observation will be central in our
approach to IS-algebras (see Section~\ref{sec:sem}).
The operation $\nabla$ is given on $\alg{IS}_6$ (and on its subalgebras)
by $\nabla \bbot = \bbot $ and 
$\nabla 0 = \nabla a = \nabla b =  \nabla 1 = \nabla \ttop = \ttop$.
%
Observe that $\alg{IS}_2$ is 
isomorphic to the two-element Boolean algebra $\alg{B_2}$
(on which $\nabla$  is the identity map). It is also well known that,  upon defining $x \imp y := (\nabla \nnot x \lor y ) \land (\nabla y \lor \nnot x)$, the algebra $\alg{IS}_3$ can be endowed with an MV-algebra
structure~\cite[Thm.~2.9]{CiGa83}.
Conversely, on the three-element MV-algebra, 
one obtains an IS-algebra structure by letting
$\nabla x : = \nnot x \imp x$. Thus $\alg{IS}_3$ can be viewed as the three-element MV-algebra.
On the other hand, on $\alg{IS}_4$ and $\alg{IS}_5$ the \L ukasiewicz implication is not definable (to see this, it is
sufficient to observe that neither of these algebras is simple, whereas it is known that every finite MV-chain is~\cite[Cor.~3.5.4]{CiMuOt99}.
The following result is an easy consequence of the observations in~\cite{CiGa83}, and entails
that, to verify the validity on all IS-algebras of not only identities but also quasi-identities, it is sufficient to
test them on 
$\alg{IS}_6$.

 \begin{proposition}
\label{prop:quasgen}
$\ISA = \VV (\alg{IS}_6) = \QQ (\alg{IS}_6)$. 
\end{proposition}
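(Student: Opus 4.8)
The plan is to prove the two equalities $\ISA = \VV(\alg{IS}_6)$ and $\VV(\alg{IS}_6) = \QQ(\alg{IS}_6)$ separately, the first being a structural/subdirect-decomposition argument and the second a consequence of the fact that a finitely generated variety generated by a \emph{single finite} algebra which happens to be (close to) the only subdirectly irreducible coincides with the quasivariety it generates.

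First I would establish $\ISA = \VV(\alg{IS}_6)$. Since $\ISA$ is a variety, by the general fact recalled in Section~\ref{sec:prel} we have $\ISA = \II\PR_S(\ISA_{si})$, so it suffices to identify the subdirectly irreducible IS-algebras. Figure~\ref{fig:1} asserts that these are exactly $\alg{IS}_6, \alg{IS}_5, \alg{IS}_4, \alg{IS}_3$ (and possibly $\alg{IS}_2 \cong \alg{B}_2$). I would verify, using the structural description of IS-algebras over De Morgan algebras (the ``new top/new bottom'' construction of Proposition~\ref{prop:newtop}, together with the characterization of s.i.\ De Morgan algebras in Figure~\ref{fig:2}), that every s.i.\ IS-algebra embeds into $\alg{IS}_6$: indeed $\alg{IS}_5, \alg{IS}_4, \alg{IS}_3$ are displayed as subalgebras of $\alg{IS}_6$, and $\alg{B}_2$ embeds into $\alg{IS}_3$. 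Hence $\ISA_{si} \subseteq \SU(\alg{IS}_6)$, which gives $\ISA = \II\PR_S(\ISA_{si}) \subseteq \II\PR_S\SU(\alg{IS}_6) \subseteq \VV(\alg{IS}_6)$; the reverse inclusion is immediate since $\alg{IS}_6 \in \ISA$ and $\ISA$ is a variety. The appeal to~\cite{CiGa83} mentioned just before the statement is presumably exactly this enumeration of the subdirectly irreducibles.

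Next, for $\VV(\alg{IS}_6) = \QQ(\alg{IS}_6)$, the inclusion $\QQ(\alg{IS}_6) \subseteq \VV(\alg{IS}_6)$ is trivial. For the converse it is enough to show that $\QQ(\alg{IS}_6)$ is closed under homomorphic images, equivalently (since $\QQ(\alg{IS}_6) = \II\SU\PR(\alg{IS}_6)$ by the standard description of generated quasivarieties) that every subdirectly irreducible member of $\VV(\alg{IS}_6)$ already lies in $\SU(\alg{IS}_6)$. But by the previous paragraph the s.i.\ members of $\VV(\alg{IS}_6) = \ISA$ are among $\alg{IS}_6, \alg{IS}_5, \alg{IS}_4, \alg{IS}_3$, each of which is a subalgebra of $\alg{IS}_6$, hence lies in $\QQ(\alg{IS}_6)$. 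Since every algebra in a variety is a subdirect product of s.i.\ ones, and $\QQ(\alg{IS}_6)$ is closed under subdirect products (being closed under $\SU$ and $\PR$), we get $\VV(\alg{IS}_6) \subseteq \QQ(\alg{IS}_6)$. Alternatively one can invoke the classical principle that if a variety is generated by a finite algebra all of whose subalgebras are again in the class and whose s.i.\ algebras are subalgebras of it, then variety and quasivariety coincide.

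The only genuine point requiring care — the ``main obstacle'' — is the identification of the subdirectly irreducible IS-algebras, i.e.\ justifying Figure~\ref{fig:1}. Everything else is formal manipulation with the operators $\II, \HO, \SU, \PR, \PR_S$. I expect this identification to be handled by transporting the known classification of s.i.\ De Morgan algebras (Figure~\ref{fig:2}) through the correspondence described around Proposition~\ref{prop:newtop}: a congruence of an IS-algebra restricts to a congruence of its De Morgan reduct compatible with $\nabla$, and conversely the ``adjoin a new top and bottom'' construction is functorial enough that monolithic congruences of $\alg{IS}_n$ correspond to those of the underlying $\alg{DM}_4$, $\alg{K}_3$, $\alg{B}_2$ or the trivial algebra. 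Since the paper explicitly says the proposition ``is an easy consequence of the observations in~\cite{CiGa83}'', I would cite that classification rather than reprove it, and keep the argument to the two short operator-calculus chains above.
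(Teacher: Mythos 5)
Your proposal is correct and follows essentially the same route as the paper: both reduce everything to the fact (from Cignoli--Gallego, cf.~Figure~\ref{fig:1}) that every subdirectly irreducible algebra in $\VV(\alg{IS}_6)=\ISA$ is a subalgebra of $\alg{IS}_6$, and then conclude $\VV(\alg{IS}_6)=\QQ(\alg{IS}_6)$ by the standard principle that the paper cites from Clark--Davey and that you spell out via subdirect decompositions. The only cosmetic difference is that the paper quotes $\ISA=\VV(\alg{IS}_6)$ directly from~\cite{CaFi18} instead of rederiving it from the s.i.\ classification, which does not change the substance.
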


\begin{proof}
That $\ISA = \VV (\alg{IS}_6)$ is well-known~\cite[Cor.~3.5]{CaFi18}.
%
A sufficient condition for having $\VV (\alg{IS}_6) = \QQ(\alg{IS}_6) = \II \SU \PR (\alg{IS}_6)$
is that all the subdirectly irreducible algebras in 
$\VV (\alg{IS}_6)$
be subalgebras of $\alg{IS}_6$ (see e.g.~\cite[Thm.~3.6.ii]{ClDa98}). The latter indeed holds, and has been observed in~\cite{CiGa83}; see
Thm.~2.8 therein and the subsequent remarks.
\end{proof}



The following easy observation will be very useful in our study of IS-logics from Section~\ref{sec:sem} on.
Given a De Morgan algebra $\Al 
$, let
$\Al^{\nabla} = \la A \cup \{ \bbot, \ttop \}, \nabla \ra$ be the algebra defined as follows.
The lattice reduct of $\Al^{\nabla}$ is obtained by adjoining a new top element $\ttop$ and a new bottom 
element $\bbot$ to the lattice reduct of $\Al$. The De Morgan negation $\nnot$ is extended to $\Al^{\nabla}$ in the obvious way,
i.e.~by letting $\nnot \ttop = \bbot$ and $\nnot \bbot = \ttop$. Furthermore, the unary operator $\nabla$ is defined as follows:
$\nabla \bbot = \bbot$ and $\nabla a = \nabla \ttop = \ttop$ for all $a \in A$.
It is then clear that the $\nabla$-free reduct of $\Al^{\nabla}$ is a De Morgan algebra, and it is very easy to check that
$\nabla$ satisfies all the properties required by Definition~\ref{def:dm}.

The following easy observation will be very useful in our study of IS-logics from Section~\ref{sec:sem} on.
Let $\Al = \la A; \land, \lor, \nnot, \bot, \top  \ra
$ be an algebra in the language of De Morgan algebras.
Given $\bbot, \ttop \notin A$, we define
the algebra
$\Al^{\nabla} = \la A \cup \{ \bbot, \ttop \}, \nabla \ra$ as follows:

$ {\nabla} x := 
\begin{cases}
 \bbot &\mbox{ if }x=\bbot \\
 \ttop&\mbox{ otherwise }
\end{cases}
$  \qquad
$ {\nnot} x := 
\begin{cases}
 \nnot_{\A} x &\mbox{ if }x\in A\\
 \bbot &\mbox{ if }x=\ttop\\
 \ttop&\mbox{ if }x=\bbot 
\end{cases}
$ 
%

$x  {\land} y := 
\begin{cases}
 x\land_{\A} y &\mbox{if }x,y\in A\\
 \ttop&\mbox{if }x=y=\ttop\\
 \bbot &\mbox{if }x=\bbot \mbox{ or }y=\bbot \\
 z &\mbox{if }\{x,y\}=\{\ttop,z\},z\in A
\end{cases} 
$
 \quad
$x  {\lor} y := 
\begin{cases}
 x\lor_{\A} y &\mbox{if }x,y\in A\\
 \bbot &\mbox{if }x=y=\bbot \\
 \ttop&\mbox{if }x=\ttop\mbox{ or }y=\ttop\\
  z &\mbox{if }\{x,y\}=\{\bbot,z\},z\in A
\end{cases} 
$

$$\top := \ttop \quad \quad \bot := \bbot.$$ 

In case $\A$ is a De Morgan algebra, it is clear that the $\nabla$-free reduct of $\Al^{\nabla}$ is also a De Morgan algebra. 
Moreover, it is very easy to check that the above-defined
$\nabla$ operation satisfies all the properties required by Definition~\ref{def:dm}.
Thus, we have the following.

 \begin{proposition}
\label{prop:newtop}
For every De Morgan algebra $\Al 
$,
the above-defined algebra $\Al^{\nabla}$ is an IS-algebra.
\end{proposition}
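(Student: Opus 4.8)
The plan is to verify directly that $\Al^{\nabla}$, as defined above, satisfies Definition~\ref{def:isa}. This breaks into two parts: first, that the $\nabla$-free reduct is a De Morgan algebra, and second, that the four identities (IS1)--(IS4) hold. For the first part I would argue that adjoining a fresh top $\ttop$ and fresh bottom $\bbot$ to a bounded distributive lattice yields a bounded distributive lattice (a routine fact, checked by case analysis on whether the arguments lie in $A$ or in $\{\bbot,\ttop\}$; distributivity on the new elements is immediate since $\bbot$ is absorbing for $\land$ and absorbed by $\lor$, dually for $\ttop$). That $\nnot$ remains a De Morgan negation (DM1)--(DM3) follows similarly: (DM3) is clear since $\nnot$ is an involution on $\{\bbot,\ttop\}$ and on $A$; (DM1) and (DM2) are checked by cases, the only new cases being when some argument is $\bbot$ or $\ttop$, where both sides collapse to $\bbot$ or $\ttop$ as appropriate. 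Finally $\nnot\top = \nnot\ttop = \bbot = \bot$ and dually, so the reduct is a De Morgan algebra.

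For the second part I would check the IS-identities one at a time. (IS1), $\nabla\bbot = \bbot$, is true by definition. For (IS3), $\nabla(x\land y) = \nabla x \land \nabla y$: if $x = \bbot$ or $y = \bbot$ then $x \land y = \bbot$, so the left side is $\bbot$, and the right side is $\bbot \land \nabla y$ (or $\nabla x \land \bbot$), which is $\bbot$ since $\bbot$ is absorbing; if neither is $\bbot$, then $x \land y \neq \bbot$ (the only way $x\land y = \bbot$ in $\Al^{\nabla}$ is if one argument is $\bbot$), so the left side is $\ttop$, and the right side is $\ttop \land \ttop = \ttop$. For (IS2), $x \approx x \land \nabla x$: if $x = \bbot$, both sides are $\bbot$; if $x \neq \bbot$, then $\nabla x = \ttop$ and $x \land \ttop = x$ (using the clause $z$ if $\{x,y\} = \{\ttop, z\}$, together with $\ttop \land \ttop = \ttop$). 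For (IS4), $\nnot\nabla x \land \nabla x \approx \bbot$: if $x = \bbot$ then $\nabla x = \bbot$ and the meet with anything is $\bbot$; if $x \neq \bbot$ then $\nabla x = \ttop$, so $\nnot\nabla x = \nnot\ttop = \bbot$, and again the meet is $\bbot$.

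I do not anticipate a genuine obstacle here; the statement is exactly of the form the authors flag as "very easy to check". The only mild subtlety worth stating carefully is that the definitions of $\land$ and $\lor$ on $\Al^{\nabla}$ are set up precisely so that $\bbot$ is a bottom and $\ttop$ a top \emph{of the whole algebra}, with the old lattice order on $A$ preserved; one should note that for $x \in A$ we have $\bbot < x < \ttop$ strictly, so no element of $A$ equals $\bbot$ or $\ttop$, which is what makes the case distinctions above exhaustive and non-overlapping. With that remark in place, all verifications are finite case analyses and I would simply record that they go through, perhaps displaying the computation for (IS2) and (IS4) since those are the ones involving the interaction of $\nabla$ with $\nnot$ and with $\land$. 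Hence $\Al^{\nabla} \in \ISA$, as claimed.
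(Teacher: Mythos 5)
Your proposal is correct and follows essentially the same route as the paper, which simply notes that the $\nabla$-free reduct of $\Al^{\nabla}$ is a De Morgan algebra and that the identities of Definition~\ref{def:isa} are ``very easy to check''; you have merely written out the finite case analysis (including the useful observation that $\bbot,\ttop\notin A$ makes the cases exhaustive) that the paper leaves implicit.
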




\section{Semantical considerations on  IS-logics}
\label{sec:sem}

It is shown in~\cite[Thm~5.2]{CaFi18}
the order-preserving logic of the variety
$\ISA$
(which we denote by $\ISL$)
coincides with the logic determined
by the closure system of all lattice filters on the generating algebra $\alg{IS}_6$ (which are the principal up-sets
$\uparrow \! 0, \uparrow \! a, \uparrow \!  b, \uparrow \! 1$, and $\{ \ttop \}$).
Since
the matrices
$\la \alg{IS}_6, \uparrow \! a \ra $ and 
$\la \alg{IS}_6, \uparrow \! b \ra $ define the same logic~\cite[Lemma~5.4]{CaFi18}, we have that 
$\ISL$ is determined by the following set of matrices:
$\{ \la \alg{IS}_6, \uparrow \! 0 \ra , \la \alg{IS}_6, \uparrow \!  a \ra,  \la \alg{IS}_6, \uparrow \! 1 \ra, \la \alg{IS}_6,  \{  \ttop \} \ra \}$.
This result can be further sharpened, as the following Proposition shows.

 \begin{proposition}
\label{prop:onemat}
$\ISL = 
\Log{\la \alg{IS}_6, \uparrow \!  a \ra}$.
\end{proposition}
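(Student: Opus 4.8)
The goal is to upgrade the four-matrix characterization of $\ISL$ recalled just above the statement to a single-matrix one, namely $\ISL = \Log{\la \alg{IS}_6, \uparrow \! a \ra}$. Since $\la \alg{IS}_6, \uparrow \! a \ra$ is one of the four matrices in the known presentation, the inclusion $\ISL \subseteq \Log{\la \alg{IS}_6, \uparrow \! a \ra}$ is immediate. The real content is the converse: every other matrix in the list, $\la \alg{IS}_6, \uparrow \! 0 \ra$, $\la \alg{IS}_6, \uparrow \! 1 \ra$ and $\la \alg{IS}_6, \{\ttop\} \ra$, must be shown to be a model of $\Log{\la \alg{IS}_6, \uparrow \! a \ra}$, so that their logics contain $\Log{\la \alg{IS}_6, \uparrow \! a \ra}$ and hence the intersection defining $\ISL$ also contains it.

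The plan is to exhibit, for each of the three remaining matrices, a suitable map (an endomorphism of $\alg{IS}_6$, or more generally a matrix homomorphism from $\la \alg{IS}_6, \uparrow \! a \ra$ onto the target, or a strict surjective homomorphism in the reverse direction) witnessing the required inclusion of logics. Concretely, I would look for endomorphisms $f$ of $\alg{IS}_6$ with the property that $f^{-1}[\uparrow\! a] = D$ for the relevant designated set $D \in \{\uparrow\! 0, \uparrow\! 1, \{\ttop\}\}$; any such $f$ shows $\Gamma \vdash_{\la \alg{IS}_6, \uparrow a\ra} \phi$ implies $\Gamma \vdash_{\la \alg{IS}_6, D\ra} \phi$, by composing a valuation into $\la \alg{IS}_6, D\ra$ with $f$. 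For $\{\ttop\}$, note $\ttop$ is the unique fixpoint-candidate at the top; the constant-type collapses and the operation $\nabla$ (which sends every non-$\bbot$ element to $\ttop$) are exactly the tools that let one ``push'' elements of $\uparrow\! a$ up to $\ttop$ while keeping everything below $a$ out; a map built from $\nabla$ and the De Morgan structure should do the job. For $\uparrow\! 1$ and $\uparrow\! 0$, similar homomorphic images or retractions of $\alg{IS}_6$ onto its subalgebras (using the $\alg{IS}_5$, $\alg{IS}_4$, $\alg{IS}_3$ chain and the collapsing maps identifying $a$ with $b$, or $a,b$ with $0$ or $1$) should realize the needed preimage conditions.

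Alternatively, and perhaps more cleanly, I would argue semantically at the level of consequence: suppose $\Gamma \vdash_{\la \alg{IS}_6, \uparrow a\ra} \phi$ and take a valuation $v$ with $v[\Gamma] \subseteq D$ for one of the other filters $D$; I would post-compose $v$ with an appropriate homomorphism to land in a situation where $v[\Gamma] \subseteq \uparrow\! a$, conclude $v(\phi) \in \uparrow\! a$, and then transfer back to $v(\phi) \in D$. The key observation making this work is that $\uparrow\! a$ is, in a precise sense, the ``generic'' filter of $\alg{IS}_6$: it lies strictly between the extremes and every other principal filter is its image under a structure-preserving collapse. This is where the explicit description of $\alg{IS}_6$ as $\alg{DM}_4$ with a new top and bottom (Proposition~\ref{prop:newtop}) and the explicit action of $\nabla$ are used essentially.

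\textbf{Main obstacle.} The delicate case is $\la \alg{IS}_6, \{\ttop\}\ra$: its designated set is a singleton not of the form $\uparrow\! x$ for an element comparable to $a$, so one cannot simply restrict to a subalgebra. One must check carefully that there is a matrix homomorphism (or a strict surjective homomorphism in the right direction) relating it to $\la \alg{IS}_6, \uparrow\! a\ra$ — the candidate being a map that uses $\nabla$ to send all of $\uparrow\! a$ to $\ttop$ and the complement of $\uparrow\! a$ to something outside $\{\ttop\}$, and verifying this map is indeed an $\ISA$-homomorphism (respects $\land,\lor,\nnot,\nabla$ and the bounds). Getting this map right, and confirming it induces the correct preimage of the designated set, is the one place where a genuine (if short) computation is unavoidable; everything else reduces to bookkeeping with the known four-matrix presentation and standard facts about $\Log$ of classes of matrices.
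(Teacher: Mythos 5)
There is a genuine gap, and it sits exactly where you did not expect it: the case $D = \uparrow\!1$, not the case $D=\{\ttop\}$. Your mechanism for proving $\Log{\la \alg{IS}_6,\uparrow\! a\ra}\subseteq \Log{\la \alg{IS}_6, D\ra}$ is to find an endomorphism $f$ of $\alg{IS}_6$ with $f^{-1}[\uparrow\! a]=D$ (or a strict surjective homomorphism in the reverse direction, which for a finite algebra would have to be an automorphism). For $D=\uparrow\!1$ no such map exists: any endomorphism must send the $\nnot$-fixpoints $a,b$ to $\nnot$-fixpoints, i.e.\ into $\{a,b\}$; the condition $f^{-1}[\uparrow\! a]=\uparrow\!1$ forces $f(a)=f(b)=b$, whence $f(1)=f(a\lor b)=b\notin\uparrow\! a$, contradicting $1\in\uparrow\!1$. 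The only automorphisms of $\alg{IS}_6$ are the identity and the $a\leftrightarrow b$ swap, and neither helps either. So the claim that ``every other principal filter is the image of $\uparrow\! a$ under a structure-preserving collapse'' is false for $\uparrow\!1$, and your plan offers no argument for this matrix (which, being reduced, also cannot be disposed of by passing to its reduction). The paper handles it by a different idea: $\uparrow\!1=\uparrow\! a\,\cap \uparrow\! b$, so $\Log{\{\la\alg{IS}_6,\uparrow\! a\ra,\la\alg{IS}_6,\uparrow\! b\ra\}}\subseteq\Log{\la\alg{IS}_6,\uparrow\!1\ra}$ (apply both conclusions to one valuation), combined with $\Log{\la\alg{IS}_6,\uparrow\! a\ra}=\Log{\la\alg{IS}_6,\uparrow\! b\ra}$ (the cited lemma, or equivalently the swap automorphism). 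Some such two-filter/symmetry argument is unavoidable here and is missing from your proposal.

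By contrast, the two cases you worried about are the easy ones and do fit your scheme, although your concrete candidate for $\{\ttop\}$ is the wrong way round: a map sending all of $\uparrow\! a$ to $\ttop$ would satisfy $f^{-1}[\{\ttop\}]=\uparrow\! a$, which gives $\Log{\la\alg{IS}_6,\{\ttop\}\ra}\subseteq\Log{\la\alg{IS}_6,\uparrow\! a\ra}$, the inclusion you already have for free; moreover no such homomorphism exists, since $f(a)$ must be a $\nnot$-fixpoint and $\ttop$ is not one. What works is the collapse of the middle diamond: $f(\bbot)=\bbot$, $f(\ttop)=\ttop$, $f(0)=f(a)=f(b)=f(1)=b$ gives $f^{-1}[\uparrow\! a]=\{\ttop\}$, and the analogous collapse onto $a$ gives $f^{-1}[\uparrow\! a]=\uparrow\!0$; these are exactly the content of the paper's observation that the reductions of $\la\alg{IS}_6,\{\ttop\}\ra$ and $\la\alg{IS}_6,\uparrow\!0\ra$ are (isomorphic to) submatrices of $\la\alg{IS}_6,\uparrow\! b\ra$ and $\la\alg{IS}_6,\uparrow\! a\ra$. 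So your approach is salvageable for $\{\ttop\}$ and $\uparrow\!0$, but the $\uparrow\!1$ case needs the intersection-plus-symmetry argument, which is the real content of the paper's proof.
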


\begin{proof}
Observe that the matrices
$ \la \alg{IS}_6, \uparrow \! 1 \ra $ and
$\la \alg{IS}_6, \uparrow \!  a \ra$ are reduced, 
while
$\la \alg{IS}_6, \{ \ttop\} \ra $
and
$\la \alg{IS}_6, \uparrow \! 0 \ra$ are not.
The reduction of $\la \alg{IS}_6,  \{ \ttop\} \ra $ 
 is isomorphic to $\la \alg{IS}_3,  \{ \ttop\} \ra $, and is therefore isomorphic
to a submatrix of $\la \alg{IS}_6, \uparrow \!  b \ra$. 
The reduction of $\la \alg{IS}_6, \uparrow \! 0 \ra$ is isomorphic to 
$\la \alg{IS}_3,  \{ 0, \ttop\} \ra $, which in turn is isomorphic to  a submatrix
of $ \la \alg{IS}_6, \uparrow \!  a \ra$.
Thus 
$
\Log{\{ \la \alg{IS}_6, \uparrow \!  a \ra, \la \alg{IS}_6, \uparrow \! 1 \ra, \la \alg{IS}_6,  \{ \ttop\} \ra, \la \alg{IS}_6, \uparrow \! 0  \ra \}}=
\Log{\{ \la \alg{IS}_6, \uparrow \!  a \ra, \la \alg{IS}_6, \uparrow \! 1 \ra \}}
$.
To conclude the proof,
it suffices to show that
$\Log{\la \alg{IS}_6, \uparrow \!  a \ra \subseteq \Log \la \alg{IS}_6, \uparrow \! 1 \ra}  $.
To see this,
notice that $\uparrow \! 1 = \uparrow \! a \ \cap \uparrow \! b$.
This easily entails that 
$\Log {\{ \la \alg{IS}_6, \uparrow \!  a \ra, \la \alg{IS}_6, \uparrow \!  b \ra \}} \subseteq \Log{\la \alg{IS}_6, \uparrow \! 1 \ra}  $, and we have seen that
$\Log {\{ \la \alg{IS}_6, \uparrow \!  a \ra, \la \alg{IS}_6, \uparrow \!  b \ra \}} = \Log{\la \alg{IS}_6, \uparrow \!  a \ra} $.
\end{proof}

In our study, it will be useful to be able to work with \emph{reduced} matrix models of IS-logics. 
Proposition~\ref{prop:nonprot} below suggests that these cannot be characterized by simply applying the Blok-Pigozzi
algebraization process, but Proposition~\ref{prop:alg}
provides  sufficient information  for our purposes. 
(For the definitions of selfextensional, protoalgebraic and algebraizable logic,
we refer the reader to~\cite{F16}, respectively, Def.~ 5.25,~6.1 and~3.11).

 \begin{proposition}
\label{prop:nonprot}
$\ISL$
is selfextensional and non-protoalgebraic (hence, non-algebraizable).
\end{proposition}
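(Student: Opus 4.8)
The plan is to establish the two properties separately, and to let the first do most of the work for the others.

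\emph{Selfextensionality.} Recall that $\ISL$ is selfextensional iff the interderivability relation $\phi \sineq \psi$ (i.e.\ $\phi \vdash_{\ISL} \psi$ and $\psi \vdash_{\ISL} \phi$) is a congruence of the formula algebra $\Fm$. By Proposition~\ref{prop:onemat} we have $\ISL = \Log{\la \alg{IS}_6, \uparrow \! a \ra}$, but for this part it is cleaner to use the original description of $\ISL$ as the order-preserving logic $\vdash^{\leq}_{\ISA}$ of the variety $\ISA$. For any such order-preserving logic associated to a variety of bounded-distributive-lattice-based algebras, one has $\phi \sineq \psi$ iff the identity $\phi \approx \psi$ holds in the variety (indeed $\phi \vdash \psi$ and $\psi \vdash \phi$ together give $\phi \land \psi \approx \phi$ and $\phi \land \psi \approx \psi$ in $\ISA$, hence $\phi \approx \psi$; the converse is immediate). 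Since the relation ``$\phi \approx \psi$ holds in $\ISA$'' is by construction a congruence of $\Fm$ (it is the kernel of the natural map into the free algebra), selfextensionality follows. I would state this as a short self-contained argument rather than invoking a general theorem.

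\emph{Non-protoalgebraicity.} Here I would use the reduced-matrix description. A logic $\Lo$ is protoalgebraic iff there is a set of formulas $\Delta(x,y)$ (in two variables, possibly with parameters) such that $\vdash_{\Lo} \Delta(x,x)$ and $x, \Delta(x,y) \vdash_{\Lo} y$. A standard obstruction: if $\Lo$ is protoalgebraic then in \emph{every} reduced model $\la \A, D \ra$ the designated set $D$ is determined by the Leibniz congruence being trivial in a way that forces a kind of ``separation''; concretely, protoalgebraic logics have the property that the Leibniz operator is monotone on the filters of every algebra, and in particular on a single algebra $\A$ two matrices $\la \A, D_1 \ra$, $\la \A, D_2 \ra$ with $D_1 \subseteq D_2$ both reduced cannot both be models unless forced. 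The concrete route I would take: exhibit two reduced matrix models of $\ISL$ on the \emph{same} algebra, with comparable but distinct designated sets, and no reduced model ``in between'' --- or more directly, show that the Leibniz operator $\Leibniz_{\alg{IS}_6}$ fails to be monotone on the lattice filters of $\alg{IS}_6$. From the proof of Proposition~\ref{prop:onemat} we know the filters $\uparrow \! 0 \subseteq \uparrow \! a$ give matrices with $\la \alg{IS}_6, \uparrow \! a \ra$ reduced but $\la \alg{IS}_6, \uparrow \! 0 \ra$ not (its reduction collapses to $\la \alg{IS}_3, \{0,\ttop\} \ra$), while $\uparrow \! 0 \subseteq \{\ttop\}$ fails; the point is that $\Leibniz_{\alg{IS}_6}(\uparrow \! 0)$ is a nontrivial congruence identifying $a$ and $b$ (and $0$ with $1$), whereas $\Leibniz_{\alg{IS}_6}(\uparrow \! a)$ is the identity, so $\uparrow \! 0 \subseteq \uparrow \! a$ but $\Leibniz_{\alg{IS}_6}(\uparrow \! 0) \not\subseteq \Leibniz_{\alg{IS}_6}(\uparrow \! a)$ --- violating the monotonicity characterization of protoalgebraic logics. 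I would spell out exactly which pairs $\Leibniz$ identifies by a direct check of compatibility with $\uparrow \! 0$.

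\emph{Non-algebraizability} is then free: algebraizable $\Rightarrow$ protoalgebraic, so the failure of the latter kills the former. (One could alternatively note that algebraizable logics have an equivalential, hence protoalgebraic, structure.) The main obstacle I anticipate is the non-protoalgebraicity argument: one must either carry out the Leibniz-congruence computation on $\alg{IS}_6$ carefully, or instead give a direct combinatorial refutation of the existence of a protoequivalence system $\Delta(x,y)$ --- e.g.\ by choosing a valuation into $\alg{IS}_6$ and a pair of values that cannot be ``separated'' by any congruence-compatible set of formulas. The Leibniz-operator-monotonicity route is the cleanest, and the needed facts about $\Leibniz_{\alg{IS}_6}$ on the filters $\uparrow \! 0, \uparrow \! a, \uparrow \! b$ are already implicit in the proof of Proposition~\ref{prop:onemat}, so I would build on that.
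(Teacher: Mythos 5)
Your selfextensionality argument and your overall strategy for non-protoalgebraicity (failure of monotonicity of the Leibniz operator, via the standard characterization) are exactly the paper's, and the non-algebraizability step is the same trivial consequence. The gap is in your concrete witness for the monotonicity failure: you claim $\uparrow \! 0 \subseteq \uparrow \! a$, but the inclusion goes the other way. Since $0 \leq a$ in $\alg{IS}_6$, the up-set of the smaller element is the larger filter, i.e.\ $\uparrow \! a = \{a,1,\ttop\} \subsetneq \{0,a,b,1,\ttop\} = \uparrow \! 0$. With the correct inclusion, monotonicity would only require $\Leibniz_{\alg{IS}_6}(\uparrow \! a) \subseteq \Leibniz_{\alg{IS}_6}(\uparrow \! 0)$, i.e.\ $\mathrm{Id} \subseteq \theta$, which holds; so the pair $(\uparrow \! 0, \uparrow \! a)$ does not refute protoalgebraicity at all. (A minor additional inaccuracy: $\Leibniz_{\alg{IS}_6}(\uparrow \! 0) = \theta$ is the unique non-trivial congruence of $\alg{IS}_6$, whose single non-singleton class is all of $\{0,a,b,1\}$, not just the pairs $a \sim b$ and $0 \sim 1$.)

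The fix is the pair you set aside: take the smaller filter to be $\{\ttop\}$. The paper uses $\{\ttop\} \subseteq \uparrow \! 1$: one checks that $\Leibniz_{\alg{IS}_6}(\{\ttop\}) = \theta$ (the class $\{\ttop\}$ is a $\theta$-class, so $\theta$ is compatible with $\{\ttop\}$, and $\theta$ is the largest proper congruence), whereas $\la \alg{IS}_6, \uparrow \! 1 \ra$ is reduced because $1 \,\theta\, 0$ with $1 \in \uparrow \! 1$ and $0 \notin \uparrow \! 1$ rules out compatibility of $\theta$. Hence $\{\ttop\} \subseteq \uparrow \! 1$ but $\Leibniz_{\alg{IS}_6}(\{\ttop\}) = \theta \not\subseteq \mathrm{Id} = \Leibniz_{\alg{IS}_6}(\uparrow \! 1)$, which is the desired violation of monotonicity; the pair $\{\ttop\} \subseteq \uparrow \! a$ works just as well, since $\la \alg{IS}_6, \uparrow \! a \ra$ is also reduced. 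With this substitution your proof goes through and coincides with the paper's.
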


\begin{proof}
Selfextensionality simply follows from the observation that two formulas $\varphi, \psi$ are inter-derivable
in $\ISL$
if and only if the identity $\phi \approx \psi$ is valid in the variety of IS-algebras.
To show that our logic is not protoalgebraic, we verify that the Leibniz operator
$\Leibniz$
is not monotone on matrix models~\cite[Thm.~6.13]{F16}. 
To see this, observe that the algebra $ \alg{IS}_6$ has (exactly) one non-trivial congruence $\theta$,
which identifies  the elements $\{ 0, a, b, 1\}$. It is then easy to check that 
$\Leibniz_{ \alg{IS}_6} ( \{ \ttop \}) = \theta$.
On the other hand, the matrix $\la \alg{IS}_6,   \uparrow \! 1 \ra$ is reduced. Hence, the Leibniz operator is not monotone
on the matrix models based on $\alg{IS}_6$.  
\end{proof}

\begin{remark}
Proposition~\ref{prop:nonprot} can in fact be slightly strengthened. If we consider the matrices
 $\la \alg{IS}_4,  \{ \ttop \} \ra$ and  $\la \alg{IS}_4,  \uparrow \! 1  \ra$, where 
$\alg{IS}_4$ is the four-element subalgebra of $\alg{IS}_6$ with universe $\{ \bbot, 0, 1, \ttop \}$, we can observe that
$\la \alg{IS}_4,   \uparrow \! 1 \ra$ is reduced while $\la \alg{IS}_4,  \{  \ttop \} \ra$ is not. Hence the logic determined by these two
submatrices (which is obviously stronger than $\ISL$) 
 is also non-protoalgebraic. This, in turn,
entails that $\ISL$
cannot be protoalgebraic. 
\end{remark}

The following result is an instance of a general result
on order-preserving logics (see e.g.~\cite[Thm.~2.13.iii]{AlPrRi16}).

 \begin{proposition}
\label{prop:alg}
$\Alg (\ISL) = \ISA$. 
\end{proposition}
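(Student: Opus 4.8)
The plan is to establish the two inclusions $\Alg(\ISL) \subseteq \ISA$ and $\ISA \subseteq \Alg(\ISL)$ separately, using the cited general result on order-preserving logics as the conceptual backbone. Recall that $\ISL = {\vdash^{\leq}_{\ISA}}$ by definition, and that for a variety $\V$ with a bounded distributive lattice reduct, $\vdash^{\leq}_{\V}$ is the logic of the matrices $\{\la \A, F\ra : \A \in \V,\ F \text{ a lattice filter of } \A\}$ (noted at the end of Section~\ref{sec:prel}). The general theorem on order-preserving logics (\cite[Thm.~2.13.iii]{AlPrRi16}, or an analogue) asserts precisely that $\Alg(\vdash^{\leq}_{\V}) = \V$ whenever $\V$ is a suitable (e.g.~lattice-based) variety; so the cleanest route is simply to verify that the hypotheses of that theorem apply to $\V = \ISA$, which is a variety of algebras with a bounded distributive lattice reduct by Definition~\ref{def:isa}, and then invoke it.

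Should one prefer (or need) to argue more directly, here is the scheme I would follow. For the inclusion $\Alg(\ISL) \subseteq \ISA$: since $\Alg(\ISL) = \PR_S(\Alg^*(\ISL))$ and $\ISA$ is a variety (hence closed under $\PR_S$), it suffices to show $\Alg^*(\ISL) \subseteq \ISA$. So let $\A \in \Alg^*(\ISL)$, witnessed by a reduced matrix $\la \A, D\ra$ that is a model of $\ISL$. One shows that $D$ must be a lattice filter of $\A$ (using that $\ISL$ proves the rules capturing $\land$-introduction and $\leq$-monotonicity, together with reducedness and the behaviour of $\nabla$), and that every identity valid in $\ISA$ — in particular (DM1)–(DM3), the bound conditions, and (IS1)–(IS4) — must hold in $\A$; the latter follows because $\ISA$ is generated as a quasivariety by $\alg{IS}_6$ (Proposition~\ref{prop:quasgen}), so an identity failing in $\A$ would be refutable by a valuation into a reduced model, contradicting that $\la \A, D\ra$ models $\ISL$. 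For the reverse inclusion $\ISA \subseteq \Alg(\ISL)$: given $\A \in \ISA$, one exhibits enough reduced matrices on $\A$ (or on algebras having $\A$ as a subdirect factor) that are models of $\ISL$; the natural candidate is $\la \A, \{1\}\ra$ or, more robustly, the family $\{\la \A, F\ra : F \text{ a lattice filter}\}$, and one checks that the intersection of the associated Leibniz congruences is trivial — equivalently, that lattice filters separate points of $\A$, which holds in any distributive lattice.

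The main obstacle I anticipate is the verification, in the direct argument, that a reduced model $\la \A, D\ra$ of $\ISL$ really forces $\A \in \ISA$ and not merely into some larger class: one must make sure that the rule schemata implicit in $\vdash^{\leq}_{\ISA}$ are strong enough to pin down all of (IS1)–(IS4) on reduced models, and in particular that the non-protoalgebraicity of $\ISL$ (Proposition~\ref{prop:nonprot}) does not obstruct this — it does not, because $\Alg$, unlike the class of reduced algebras of a well-behaved algebraization, is exactly tailored to recover the intended variety for order-preserving logics. For this reason I would in fact present the proof in the short form: state that $\ISA$ is a lattice-based variety, observe that $\ISL$ is by definition its order-preserving logic, and conclude $\Alg(\ISL) = \ISA$ by \cite[Thm.~2.13.iii]{AlPrRi16}, relegating the verification of the hypotheses of that theorem to a one-line remark.
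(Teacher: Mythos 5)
Your recommended short form is exactly what the paper does: it gives no direct argument at all, stating the proposition as an instance of the general result on order-preserving logics of lattice-based varieties (\cite[Thm.~2.13.iii]{AlPrRi16}), applied to $\V = \ISA$ with $\ISL = {\vdash^{\leq}_{\ISA}}$. Your additional direct sketch of the two inclusions is a reasonable expansion but is not needed, and the paper does not carry it out.
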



The next Proposition we characterizes 
the logic determined by the class of matrices 
$\{ \la \Al, \{ 1 \} \ra : \Al \in \ISA \}$.
The latter 
(denoted by $\vdash^{1}_{\ISA} $)
is known in the algebraic logic literature as the \emph{1-assertional logic}
of the class $\ISA$.

 \begin{proposition}
\label{prop:quas}
${\vdash^{1}_{\VV (\Al[IS]_3) }} = {\vdash^{1}_{\ISA}} = \Log \la \Al[IS]_3, \{1 \} \ra$. 
\end{proposition}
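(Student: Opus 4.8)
The plan is to trap $\vdash^1_{\VV(\alg{IS}_3)}$ between $\vdash^1_{\ISA}$ and $\Log \la \alg{IS}_3, \{1\}\ra$ by purely formal class inclusions and then to prove that those two outer logics coincide; the latter is the only substantial step. For the formal part, note that $\alg{IS}_3 \in \ISA$ and $\ISA$ is a variety, so $\{\alg{IS}_3\} \subseteq \VV(\alg{IS}_3) \subseteq \ISA$; since the $1$-assertional logic of a class can only shrink as the class grows (a larger class yields a larger intersection of matrix consequences), this gives $\vdash^1_{\ISA} \subseteq \vdash^1_{\VV(\alg{IS}_3)} \subseteq \vdash^1_{\{\alg{IS}_3\}} = \Log \la \alg{IS}_3, \{1\}\ra$. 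Hence it suffices to prove the reverse inclusion $\Log \la \alg{IS}_3, \{1\}\ra \subseteq {\vdash^1_{\ISA}}$, after which all three logics must agree.

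I would argue this remaining inclusion by contraposition. Assume $\Gamma \not\vdash^1_{\ISA} \phi$, witnessed by some $\alg{A} \in \ISA$ and a homomorphism $v \colon \Fm \to \alg{A}$ with $v[\Gamma] \subseteq \{1^{\alg{A}}\}$ and $v(\phi) \neq 1^{\alg{A}}$, where $1^{\alg{A}}$ is the interpretation of the top constant. It is enough to produce a homomorphism $h \colon \alg{A} \to \alg{IS}_3$ with $h(v(\phi)) \neq 1^{\alg{IS}_3}$: since homomorphisms preserve constants, $h \circ v \colon \Fm \to \alg{IS}_3$ would then send $\Gamma$ into $\{1^{\alg{IS}_3}\}$ but not $\phi$, witnessing $\Gamma \not\vdash_{\la \alg{IS}_3, \{1\}\ra} \phi$. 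To build $h$, invoke Proposition~\ref{prop:quasgen}: we may assume $\alg{A}$ is a subalgebra of a power $\alg{IS}_6^{I}$, and as $v(\phi) \neq 1^{\alg{A}} = (1^{\alg{IS}_6})_{i \in I}$ there is a coordinate $i$ with $\pi_i(v(\phi)) \neq 1^{\alg{IS}_6}$, where $\pi_i \colon \alg{A} \to \alg{IS}_6$ is the restriction of the $i$-th projection. Let $q \colon \alg{IS}_6 \to \alg{IS}_6 / \theta$ be the quotient by the unique non-trivial congruence $\theta$ of $\alg{IS}_6$ (the one collapsing $\{0,a,b,1\}$; cf.\ the proof of Proposition~\ref{prop:nonprot}), so that $\alg{IS}_6 / \theta \cong \alg{IS}_3$ and $q$ is precisely the reduction map of the matrix $\la \alg{IS}_6, \{\ttop\}\ra$ (cf.\ the proof of Proposition~\ref{prop:onemat}). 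Because $\theta = \Leibniz_{\alg{IS}_6}(\{\ttop\})$ is compatible with $\{\ttop\}$ in the sense recalled in Section~\ref{sec:prel}, the only element of $\alg{IS}_6$ that $q$ sends to the top of $\alg{IS}_3$ is $\ttop$ itself; hence $q(\pi_i(v(\phi))) \neq 1^{\alg{IS}_3}$, and $h := q \circ \pi_i$ does the job. (Equivalently, one may write $q$ out by hand --- it fixes $\bbot$ and $\ttop$ and sends $0,a,b,1$ to the middle element of $\alg{IS}_3$ --- and verify directly that it is a surjective homomorphism whose only preimage of the top is $\ttop$.)

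A slicker packaging of the same idea skips the middle logic altogether: since matrix semantics depends on the class only up to $\II$, $\SU$ and $\PR$, Proposition~\ref{prop:quasgen} gives $\vdash^1_{\ISA} = \Log \la \alg{IS}_6, \{1\}\ra$, and then $\Log \la \alg{IS}_6, \{1\}\ra = \Log(\la \alg{IS}_6, \{1\}\ra^{*}) = \Log \la \alg{IS}_3, \{1\}\ra$ using $\Log \Mt = \Log \Mt^{*}$ together with the reduction identified in the proofs of Propositions~\ref{prop:nonprot} and~\ref{prop:onemat}; the squeeze of the first paragraph then finishes. In either presentation the crux is the single observation that $\alg{IS}_3$ --- through the canonical surjection $q \colon \alg{IS}_6 \twoheadrightarrow \alg{IS}_3$ --- detects the top element of every IS-algebra, which rests on (i) every IS-algebra embedding into a power of $\alg{IS}_6$ (Proposition~\ref{prop:quasgen}), and (ii) the Leibniz congruence $\Leibniz_{\alg{IS}_6}(\{\ttop\})$ identifying none of $\bbot, 0, a, b, 1$ with $\ttop$. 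I expect that to be the only genuinely delicate point; the remaining moves --- homomorphisms preserving constants, reading off a coordinate in a power of $\alg{IS}_6$ --- are routine.
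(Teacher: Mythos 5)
Your proposal is correct, and it leans on the same two pillars as the paper's own argument --- Proposition~\ref{prop:quasgen} and the identification of the reduction of $\la \alg{IS}_6, \{\ttop\}\ra$ with $\la \alg{IS}_3, \{\ttop\}\ra$ --- but the mechanics are genuinely different. The paper proves the hard inclusion syntactically: it invokes finitarity of $\Log\la\alg{IS}_3,\{1\}\ra$ to replace $\Gamma$ by a single conjunction $\gamma$, reads $\gamma\vdash\varphi$ as the quasi-identity $\gamma\approx\top\imp\varphi\approx\top$ valid in $\alg{IS}_6$, and then uses $\ISA=\QQ(\alg{IS}_6)$ to propagate validity to every IS-algebra. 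You instead argue by contraposition at the level of matrices: a counter-valuation in an arbitrary $\A\in\ISA$ is pushed forward along $\A\hookrightarrow\alg{IS}_6^I$, a projection $\pi_i$, and the canonical quotient $q$ by the unique non-trivial congruence (whose compatibility with $\{\ttop\}$ guarantees that only $\ttop$ hits the top of $\alg{IS}_3$); your ``slicker packaging'' is the same idea phrased as $\vdash^1_{\ISA}=\Log\la\alg{IS}_6,\{\ttop\}\ra=\Log\la\alg{IS}_3,\{\ttop\}\ra$ via closure of matrix consequence under $\SU$ and $\PR$ together with $\Log\Mt=\Log\Mt^*$. What your route buys: no appeal to finitarity and no conjunction trick, so infinite premise sets are handled directly; it also states the ``easy'' inclusions with the correct antitone orientation ($\vdash^1_{\ISA}\subseteq\vdash^1_{\VV(\alg{IS}_3)}\subseteq\Log\la\alg{IS}_3,\{1\}\ra$), whereas the paper's opening chain asserts $\vdash^1_{\VV(\alg{IS}_3)}\subseteq\vdash^1_{\ISA}$, which is not the direction that follows from monotonicity alone. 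What the paper's route buys is a formulation closest to the quasi-identity statement of Proposition~\ref{prop:quasgen}, with no need to unfold $\QQ(\alg{IS}_6)$ as $\II\SU\PR(\alg{IS}_6)$. One small notational caution: write the top of $\alg{IS}_6$ as $\ttop$ rather than $1^{\alg{IS}_6}$, since $1$ already names the co-atom of $\alg{IS}_6$ and your argument precisely exploits that $1\neq\ttop$ is collapsed onto the middle element of $\alg{IS}_3$.
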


\begin{proof}
Obviously  ${\vdash^{1}_{\VV (\Al[IS]_3) }} \subseteq {\vdash^{1}_{\ISA}} 
\subseteq \Log \la \Al[IS]_3, \{1 \} \ra$, so it suffices to verify the inequality
$  \Log \la \Al[IS]_3, \{1 \} \ra \subseteq
{\vdash^{1}_{\VV (\Al[IS]_3) }}
$. 
Assume $\Gamma \vdash_{\Log \la \Al[IS]_3, \{1 \} \ra} \varphi $. Observe that, since $\Log \la \Al[IS]_3, \{1 \} \ra$ is finitary, we can assume
$\Gamma$ to be finite. Then 
$\gamma   \vdash_{\Log \la \Al[IS]_3, \{1 \} \ra} \varphi$ for  $\gamma : = \bigwedge \Gamma$. The latter is equivalent to 
$\gamma   \vdash_{\Log \la \Al[IS]_6, \{1 \} \ra} \varphi$, because, as observed earlier,
$\la \Al_6, \{1 \} \ra^* =  \la \Al[IS]_3, \{1 \} \ra  
$. In turn, $\gamma   \vdash_{\Log \la \Al_6, \{1 \} \ra} \varphi$
entails that $\Al[IS]_6$ satisfies the quasi-identity $\gamma \approx \top \imp \varphi \approx \top$. By Proposition~\ref{prop:quasgen},
this entails that $\gamma \approx \top \imp \varphi \approx \top$ is satisfied by every
$\A \in \ISA$.
Hence,  $\gamma \vdash \varphi$ holds in every matrix in the class $\{ \la \Al, \{ 1 \} \ra : \Al \in \ISA 
\}$ and, a fortiori, 
in every matrix in the class $\{ \la \Al, \{ 1 \} \ra : \Al \in \VV(\Al[IS]_3) 
\}$.
This means that $\gamma  \vdash^{1}_{\Al[IS]_3} \varphi$ or, equivalently,
$\Gamma \vdash^{1}_{\Al[IS]_3} \varphi$.
\end{proof}

Recalling that the algebra $\Al[IS]_3$ is isomorphic to 
to the
three-element \L ukasiewicz(-Moisil) algebra,
Proposition~\ref{prop:quas} tells us 
 that  
$\vdash^{1}_{\ISA} $ is (term equivalent) to  three-valued 
\L ukasiewicz logic. This logic is axiomatized, relatively to $\ISL$, in
Theorem~\ref{th:axnotinshape} (i).


We now return to the construction introduced at the end of Section~\ref{sec:isal}
and  illustrate its remarkable logical consequences.
Let $\Mt = \la \A, D \ra$ be a matrix, with $\A$ an algebra in the language of De Morgan algebras.
Then $\A^{\nabla}$ is in the language of $\ISA$. 
Denoting by $\widehat{\A}$ the $\nabla$-free reduct of $\A^{\nabla}$,
 let $\widehat{\Mt} := \la \widehat{\Al}, D \cup \{ \ttop \} \ra$.


 \begin{lemma}
\label{lem:newtopmat}
Let  $\Mt 
$ be a reduced model of $\B$. 
Then $\Mt \cong (\widehat{\Mt})^{*} $.
\end{lemma}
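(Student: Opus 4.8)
The plan is to exhibit a surjective strict homomorphism of matrices $h \colon \widehat{\Mt} \twoheadrightarrow \Mt$ and then to observe that, as $\Mt$ is already reduced, the kernel of $h$ must be the entire Leibniz congruence of $\widehat{\Mt}$; the isomorphism $\Mt \cong (\widehat{\Mt})^{*}$ is then immediate. First I would record what the hypothesis provides. Since $\Mt = \la \A, D \ra$ is a reduced model of $\B$, the algebra $\A$ lies in $\Alg^{*}(\B)$, which is known to be the variety of De Morgan algebras; moreover, since $\B$ validates $p, q \vdash p \land q$ and $p \vdash p \lor q$, the set $D$ is a lattice filter of $\A$, so $\top^{\A} \in D$. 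Assuming $\A$ nontrivial --- the one-element case being vacuous, or excluded, under the convention that filters be proper --- reducedness forces $D \subsetneq A$, since for $D = A$ the congruence $\Leibniz_{\A}(D)$ would be the whole of $A \times A$ rather than the identity $\Delta_{A}$; hence also $\bot^{\A} \notin D$. In summary: $D$ is a proper lattice filter of the De Morgan algebra $\A$, with $\top^{\A} \in D$ and $\bot^{\A} \notin D$, and $\Leibniz_{\A}(D) = \Delta_{A}$.

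Next I would define $h \colon \widehat{\A} \to \A$ by $h(\ttop) := \top^{\A}$, $h(\bbot) := \bot^{\A}$ and $h(a) := a$ for all $a \in A$, and verify --- going through the clauses defining the operations of $\A^{\nabla}$, hence of its $\nabla$-free reduct $\widehat{\A}$ --- that $h$ is a homomorphism of De Morgan algebras. The only clauses requiring anything are those with $\ttop$ or $\bbot$ among the arguments, and there the check is immediate from the fact that $\top^{\A}$ (resp.\ $\bot^{\A}$) is the greatest (resp.\ least) element of $\A$. Clearly $h$ is onto, and since $\top^{\A} \in D$ while $\bot^{\A} \notin D$ one has $h^{-1}[D] = D \cup \{ \ttop \}$; so $h$ is a strict surjective matrix homomorphism from $\widehat{\Mt} = \la \widehat{\A}, D \cup \{ \ttop \} \ra$ onto $\Mt$. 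Put $\theta := \ker h$; then $\widehat{\Mt}/\theta \cong \Mt$, and $\theta$ is the congruence of $\widehat{\A}$ that identifies $\ttop$ with $\top^{\A}$, identifies $\bbot$ with $\bot^{\A}$, and does nothing else.

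It remains to see that $\theta = \Leibniz_{\widehat{\A}}(D \cup \{ \ttop \})$. One inclusion is clear: being $\ker h$ with $h$ strict, $\theta$ is compatible with $D \cup \{ \ttop \} = h^{-1}[D]$, so $\theta \subseteq \Leibniz_{\widehat{\A}}(D \cup \{ \ttop \})$. For the converse, note that $D \cup \{ \ttop \}$ is a union of $\theta$-classes; hence any congruence of $\widehat{\A}$ that contains $\theta$ and is compatible with $D \cup \{ \ttop \}$ projects, along the quotient $\widehat{\A}/\theta \cong \A$, to a congruence of $\A$ compatible with $D$, which must be $\Delta_{A}$ because $\Leibniz_{\A}(D) = \Delta_{A}$; so the original congruence equals $\theta$. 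In particular $\Leibniz_{\widehat{\A}}(D \cup \{ \ttop \})$ equals $\theta$, and therefore $(\widehat{\Mt})^{*} = \widehat{\Mt}/\Leibniz_{\widehat{\A}}(D \cup \{ \ttop \}) = \widehat{\Mt}/\theta \cong \Mt$. (The last step may alternatively be phrased by invoking that the Leibniz operator commutes with quotients.)

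I expect the only substantive point to be this final one --- that re-gluing the two freshly adjoined bounds to the old top and bottom is \emph{all} that the reduction of $\widehat{\Mt}$ does --- and it is exactly here that the assumption ``$\Mt$ reduced'' is used in an essential way; everything else, including the check that $h$ is a homomorphism, is routine. The single place calling for a little care is the degenerate one-element algebra, which has to be set aside; relatedly, one should make sure that the description of the reduced models of $\B$ being relied upon is the one restricted to proper lattice filters of nontrivial De Morgan algebras.
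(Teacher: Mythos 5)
Your proof is correct, but it follows a genuinely different route from the paper's. The paper argues concretely: it invokes the explicit description of Leibniz congruences over De Morgan matrices from \cite[Prop.~3.13]{F97} (membership of $\la a,b\ra$ in $\Leibniz$ is tested by whether $a\lor c$ and $\nnot a\lor c$ are designated simultaneously with $b\lor c$ and $\nnot b\lor c$), checks by hand that $\la 1,\ttop\ra$ and hence $\la 0,\bbot\ra$ belong to $\Leibniz_{\widehat{\A}}(F\cup\{\ttop\})$, and then uses reducedness of $\Mt$ (again through the same criterion) to exclude any further identifications, so that the reduction does nothing but re-glue the adjoined bounds. You package the same content abstractly: the collapse of $\ttop,\bbot$ onto the old bounds is a strict surjective matrix homomorphism $h\colon\widehat{\Mt}\to\Mt$ with $h^{-1}[D]=D\cup\{\ttop\}$, and then either the standard fact that the Leibniz operator commutes with inverse images along surjective homomorphisms, $\Leibniz_{\widehat{\A}}(h^{-1}[D])=h^{-1}[\Leibniz_{\A}(D)]$, or your correspondence-theorem argument (which checks out: $\ker h$ is compatible, and any compatible congruence above it projects to a congruence of $\A$ compatible with $D$, hence to the identity) gives $\Leibniz_{\widehat{\A}}(D\cup\{\ttop\})=\ker h$ in one stroke. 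Your route needs no computation specific to De Morgan algebras and would generalize to any similar bound-adjoining construction; the paper's is more self-contained given Font's characterization and exhibits the resulting congruence explicitly. One further point in your favour: you make explicit the hypotheses $\top^{\A}\in D$, $\bot^{\A}\notin D$ and set aside the degenerate one-element case, which the paper's proof uses silently (it relies on $0\notin F$); the lemma does require the filter to be proper, so this is extra care rather than a defect.
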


\begin{proof}
Recall from~\cite[Thm.~3.14]{F97} that all reduced models of $\B$
are matrices $\Mt = \la \A, F \ra$, with $\A$ a De Morgan algebra and $F$ a lattice filter.
To establish the claim, it suffices to show that $\Leibniz_{\widehat {\Al}} (F \cup \{ \ttop \}) = Id_{\widehat {A}} \cup \{ \la 0, \bbot \ra, \la \bbot, 0 \ra, \la 1, \ttop \ra, \la \ttop, 1 \ra \}$. Let $a,b \in \widehat {A}$. According to~\cite[Prop.~3.13]{F97}, we have $\la a,b \ra \in \Leibniz_{\widehat {\Al}} (F \cup \{ \ttop \})$
if and only if, for all $c \in \widehat {A}$, the following hold: ($a \lor c \in F \cup \{ \ttop \}$ iff $b \lor c \in F \cup \{ \ttop \}$)
and ($\nnot a \lor c \in F \cup \{ \ttop \}$ iff $\nnot b \lor c \in F \cup \{ \ttop \}$). Let us show that $\la 1, \ttop \ra \in \Leibniz_{\widehat {\Al}} (F \cup \{ \ttop \})$. Observe that $1 \lor c, \ttop \lor c \in F \cup \{ \ttop \}$ for all $c \in \widehat {A}$. The first condition is thus obviously satisfied. As to the second, assume
$\nnot 1 \lor c = 0 \lor c \in F \cup \{ \ttop \}$ for some $c \in \widehat {A}$. Then $c \notin \{ 0, \bbot \}$, because $0 \lor \bbot = 0 \lor 0 = 0 \notin F \cup \{ \ttop \}$. This entails $0 < c$, so $0 \lor c = c \in F \cup \{ \ttop \}$. Hence, $\nnot \ttop \lor c \in F \cup \{ \ttop \}$. 
Conversely, if $\nnot \ttop \lor c = \bbot \lor c \in F \cup \{ \ttop \}$, then 
we immediately have $\nnot 1 \lor c = 0 \lor c \in F \cup \{ \ttop \}$ because $\bbot \leq 0$.
Hence, $\la 1, \ttop \ra \in \Leibniz_{\widehat {\Al}} (F \cup \{ \ttop \})$. By the congruence properties, this entails 
$\la \nnot 1, \nnot \ttop \ra= \la 0, \bbot \ra \in  \Leibniz_{\widehat {\Al}} (F \cup \{ \ttop \})$, thus also
$\la \bbot, 0 \ra, \la \ttop, 1 \ra \in  \Leibniz_{\widehat {\Al}} (F \cup \{ \ttop \})$.
Now let $\la a,b \ra \in  \Leibniz_{\widehat {\Al}} (F \cup \{ \ttop \})$ be such that
$a, b \notin \{ \bbot, \ttop \}$ and $a \neq b$. The latter assumption entails that 
$\la a,b \ra \notin  \Leibniz_{\Al} (F )$, because $\Mt$ was reduced. Then, by~\cite[Prop.~3.13]{F97}, there is $c \in A$
such that either ($a \lor c \in F$ and $b \lor c \notin F$) or ($\nnot a \lor c \notin F$ and $ \nnot b \lor c \in F$).
In the former case, we have $a \lor c \in F \cup \{ \ttop \}$ and $b \lor c \notin F \cup \{ \ttop \}$, because $b \lor c \neq \ttop$ for 
all $b, c \in A$. Hence, we should have $\la a,b \ra \in  \Leibniz_{\widehat {\Al}} (F \cup \{ \ttop \})$, contradicting our assumptions.
A similar reasoning shows that the latter case also leads to a contradiction. 
\end{proof}

We  note that  Lemma~\ref{lem:newtopmat} 
could be proved in a more general form, that we shall however not need for our present purposes. Indeed, given an arbitrary (not necessarily reduced) model $\Mt 
$ of $\B$, 
one can show that 
$\Mt^{*} \cong (\widehat {\Mt})^{*} $.

 \begin{corollary}
\label{cor:newtopmat}
Given matrices $\Mt $ and $ \widehat {\Mt} $ as per Lemma~\ref{lem:newtopmat}, 
we have $\Log \Mt  = \Log{\widehat {\Mt}}$.
\end{corollary}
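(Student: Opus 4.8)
The plan is to derive Corollary~\ref{cor:newtopmat} directly from Lemma~\ref{lem:newtopmat} together with the standard fact, recalled in the preliminaries, that reduction does not change the logic of a matrix: $\Log \Mt = \Log \Mt^{*}$ for every matrix $\Mt$. Thus the whole argument is a two-line chain of identities, and the only genuine content is making sure all the hypotheses of Lemma~\ref{lem:newtopmat} are actually in force and that the relevant reductions are legitimate.

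First I would observe that, since $\Mt$ is a reduced model of $\B$ (that is the standing hypothesis inherited from Lemma~\ref{lem:newtopmat}), we have $\Mt = \Mt^{*}$, so in particular $\Log \Mt = \Log \Mt^{*}$. Next, applying the general identity $\Log \NN = \Log \NN^{*}$ to the matrix $\NN := \widehat{\Mt}$, we get $\Log \widehat{\Mt} = \Log (\widehat{\Mt})^{*}$. Finally, Lemma~\ref{lem:newtopmat} gives an isomorphism $\Mt \cong (\widehat{\Mt})^{*}$, and isomorphic matrices determine the same logic; hence $\Log \Mt = \Log (\widehat{\Mt})^{*} = \Log \widehat{\Mt}$, which is exactly the asserted equality. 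So the chain is
\[
\Log \widehat{\Mt} = \Log (\widehat{\Mt})^{*} = \Log \Mt,
\]
using reduction-invariance of $\Log$ for the first step and Lemma~\ref{lem:newtopmat} (plus isomorphism-invariance of $\Log$) for the second.

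There is essentially no obstacle here: the corollary is a bookkeeping consequence of the lemma. The only point requiring a moment's care is that the general fact $\Log \NN = \Log \NN^{*}$ is being invoked for $\NN = \widehat{\Mt}$, which is a matrix over the (expanded) signature of $\ISA$; but this fact holds for matrices over an arbitrary signature, so there is no issue. One might also remark — though it is not needed for the proof — that in view of the stronger form of the lemma mentioned in the text (namely $\Mt^{*} \cong (\widehat{\Mt})^{*}$ for an arbitrary, not necessarily reduced, model $\Mt$ of $\B$), the equality $\Log \Mt = \Log \widehat{\Mt}$ in fact holds for every model $\Mt$ of $\B$, by the same two-step argument applied to $\Mt^{*}$ in place of $\Mt$.
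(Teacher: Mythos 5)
Your proof is correct and is essentially the argument the paper intends: the corollary is an immediate consequence of Lemma~\ref{lem:newtopmat} together with the facts, recalled in the preliminaries, that $\Log \NN = \Log \NN^{*}$ for any matrix $\NN$ and that isomorphic matrices determine the same logic, giving $\Log \widehat{\Mt} = \Log (\widehat{\Mt})^{*} = \Log \Mt$. One harmless slip: $\widehat{\Mt}$ is by definition the $\nabla$-free reduct of $\Mt^{\nabla}$, so it is a matrix over the De Morgan signature $\Sigma$ (it is $\Mt^{\nabla}$ that lives in the expanded signature), but since $\Log \NN = \Log \NN^{*}$ holds over any signature this does not affect your argument.
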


As before,  $\Mt = \la \A, F \ra$ is a matrix such that  $\Al$ is a De Morgan algebra and $F \subseteq A$ a lattice filter on $\Al$.
Consider  the IS-algebra $
{\Al}^{\nabla}$  defined according to Proposition~\ref{prop:newtop},
and let $\Mt^{\nabla} = \la 
{\Al}^{\nabla}, F \cup \{ \ttop \} \ra$.

 \begin{corollary}
\label{cor:newtopconsex}
Let  $\Mt = \la \A, F \ra$ be a reduced matrix, with $\Al$ a De Morgan algebra and $F \subseteq A$ a lattice filter on $\Al$.
Then $\Log\Mt^{\nabla}$ 
is a conservative expansion of $\Log \Mt$.
\end{corollary}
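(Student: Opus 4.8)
The plan is to connect the $\nabla$-expansion $\Mt^{\nabla}$ to the $\nabla$-free matrix $\widehat{\Mt}$ already handled in Corollary~\ref{cor:newtopmat}, exploiting the fact that $\nabla$ is term-definable on the relevant algebras. Observe that $\widehat{\Al}$ is exactly the $\nabla$-free reduct of $\Al^{\nabla}$, so $\Mt^{\nabla}$ is an expansion of $\widehat{\Mt}$ by the single operation $\nabla$; since the designated sets coincide ($F \cup \{\ttop\}$ in both cases), the consequence relation $\Log \Mt^{\nabla}$ restricted to $\nabla$-free formulas is contained in $\Log \widehat{\Mt}$, and the reverse containment is immediate because every $\nabla$-free valuation into $\Al^{\nabla}$ is a valuation into $\widehat{\Al}$. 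Hence $\Log \Mt^{\nabla}$ is an expansion of $\Log \widehat{\Mt} = \Log \Mt$ (the last equality being Corollary~\ref{cor:newtopmat}), and what remains is to show the expansion is \emph{conservative}, i.e.\ that no new $\nabla$-free consequences are introduced.

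For conservativity, the key point is that on $\Al^{\nabla}$ the operation $\nabla$ takes values in the two-element Boolean subalgebra $\{\bbot, \ttop\}$: concretely $\nabla x = \ttop$ unless $x = \bbot$, in which case $\nabla x = \bbot$. Given a $\nabla$-free formula $\phi$ and $\nabla$-free $\Gamma$ with $\Gamma \vdash_{\Mt^{\nabla}} \phi$, I want to show $\Gamma \vdash_{\widehat{\Mt}} \phi$. Take any valuation $v \colon \Fm \to \widehat{\Al}$ with $v[\Gamma] \subseteq F \cup \{\ttop\}$; since $\widehat{\Al}$ and $\Al^{\nabla}$ share the same universe and the same $\nabla$-free operations, $v$ is also a valuation into $\Al^{\nabla}$, and it automatically respects $\nabla$ because $\Al^{\nabla}$ has a genuine $\nabla$ operation defined on that universe. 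Thus $\Gamma \vdash_{\Mt^{\nabla}} \phi$ applies directly to $v$, giving $v(\phi) \in F \cup \{\ttop\}$. This shows $\Gamma \vdash_{\widehat{\Mt}} \phi$, so the expansion adds nothing on $\nabla$-free formulas.

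Putting the two halves together: $\Log \Mt^{\nabla}$ is an expansion of $\Log \widehat{\Mt}$, and it is conservative; by Corollary~\ref{cor:newtopmat}, $\Log \widehat{\Mt} = \Log \Mt$, so $\Log \Mt^{\nabla}$ is a conservative expansion of $\Log \Mt$, as claimed. I would also remark that $\Mt^{\nabla}$ is genuinely a matrix over the IS-algebra $\Al^{\nabla}$ by Proposition~\ref{prop:newtop}, so $\Log \Mt^{\nabla}$ is indeed a logic in the language of $\ISA$, and that $F \cup \{\ttop\}$ is a lattice filter of $\Al^{\nabla}$ (so this fits the pattern of the order-preserving semantics).

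Honestly, there is essentially no obstacle here: the statement is close to a formality once one notices that $\widehat{\Al}$ is literally the $\nabla$-free reduct of $\Al^{\nabla}$, so the only content is bookkeeping about valuations and the observation that adding a term-like operation to a matrix over the \emph{same} carrier with the \emph{same} designated set can never change the $\nabla$-free fragment. The mild subtlety worth stating explicitly is \emph{why} a $\nabla$-free valuation into $\widehat{\Al}$ lifts to a valuation into $\Al^{\nabla}$ — namely that the underlying set and the interpretation of all $\nabla$-free connectives are unchanged, so the lift is forced and unique — but this is exactly the standard reduct argument for conservative expansions and needs no real work.
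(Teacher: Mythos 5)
Your proposal is correct and follows essentially the same route as the paper: the paper's proof also reduces the claim to Corollary~\ref{cor:newtopmat} together with the observation that the $\nabla$-free fragment of $\Log \Mt^{\nabla}$ is $\Log \widehat{\Mt}$, which you simply spell out via the standard reduct/unique-extension argument for valuations. No gaps; your elaboration of the valuation bookkeeping is exactly the content the paper leaves implicit.
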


\begin{proof}
Using Corollary~\ref{cor:newtopmat}, it suffices to observe that the $\nabla$-free fragment of $\Log \Mt^{\nabla} $ is $\Log {\widehat{\Mt}} $.
\end{proof}


 \begin{corollary}
\label{cor:forlogbel}
$\ISL$ is a conservative expansion of the Belnap-Dunn
logic $\B$.
\end{corollary}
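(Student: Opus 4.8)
The plan is to obtain Corollary~\ref{cor:forlogbel} as an immediate application of Corollary~\ref{cor:newtopconsex}, by choosing the right De Morgan matrix $\Mt$. First I would recall that, by the characterization of reduced models of $\B$ from~\cite[Thm.~3.14]{F97}, the Belnap-Dunn logic $\B$ is determined by the class of matrices $\{ \la \Al, F \ra : \Al \text{ a De Morgan algebra}, F \text{ a lattice filter on } \Al \}$; in fact, since De Morgan algebras form the variety generated by $\alg{DM}_4$, the logic $\B$ is already captured by taking $\Al = \alg{DM}_4$ and letting $F$ range over the (principal) lattice filters of $\alg{DM}_4$, namely $\uparrow\! a$, $\uparrow\! b$, $\uparrow\! 1$ and $\{1\}$ — or, as in Proposition~\ref{prop:onemat} and~\cite[Lemma~5.4]{CaFi18}, just over $\uparrow\! a$ after identifying matrices that define the same logic.

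Next I would apply the $(-)^{\nabla}$ construction of Proposition~\ref{prop:newtop} to $\alg{DM}_4$: by the discussion following Definition~\ref{def:isa}, $(\alg{DM}_4)^{\nabla}$ is exactly $\alg{IS}_6$. For each lattice filter $F$ on $\alg{DM}_4$, the matrix $\Mt = \la \alg{DM}_4, F \ra$ is reduced (being a reduced model of $\B$), so Corollary~\ref{cor:newtopconsex} applies and tells us that $\Log \Mt^{\nabla} = \Log \la \alg{IS}_6, F \cup \{\ttop\} \ra$ is a conservative expansion of $\Log \Mt = \Log \la \alg{DM}_4, F \ra$. Taking the intersection over all such $F$, the logic $\bigcap_F \Log \la \alg{IS}_6, F \cup \{\ttop\} \ra$ is a conservative expansion of $\bigcap_F \Log \la \alg{DM}_4, F \ra = \B$. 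It then remains to identify this intersection with $\ISL$: by the result of~\cite[Thm.~5.2]{CaFi18} quoted at the start of Section~\ref{sec:sem}, $\ISL$ is determined by the matrices $\la \alg{IS}_6, G \ra$ with $G$ a lattice filter on $\alg{IS}_6$, and the lattice filters of $\alg{IS}_6$ are precisely $\uparrow\! 0, \uparrow\! a, \uparrow\! b, \uparrow\! 1, \{\ttop\}$, which are exactly the sets $F \cup \{\ttop\}$ for $F$ a lattice filter (including the empty one) on $\alg{DM}_4$ — indeed adjoining $\ttop$ to a filter of $\alg{DM}_4$ yields a filter of $\alg{IS}_6$, and conversely every filter of $\alg{IS}_6$ contains $\ttop$ and restricts to a filter of $\alg{DM}_4$ together with $\ttop$ (with $\{\ttop\}$ corresponding to the empty filter). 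Hence the two intersections agree and $\ISL$ is a conservative expansion of $\B$.

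Alternatively, and perhaps more cleanly, I could avoid matching up filter lattices by hand: since conservativity of an expansion is witnessed by a single pair of matrices in the extreme cases, it suffices to note that $\B = \Log \la \alg{DM}_4, \uparrow\! a \ra$ (by Proposition~\ref{prop:onemat}'s De Morgan analogue) and that $\ISL \supseteq \Log \la \alg{IS}_6, \uparrow\! a \ra = \Log \la \alg{DM}_4, \uparrow\! a \ra^{\nabla}$, so that on $\nabla$-free formulas $\ISL \subseteq \Log \la \alg{IS}_6, \uparrow\! a \ra$ restricted to that fragment equals $\B$; combined with the trivial inclusion $\B \subseteq \ISL$ on the common fragment (every $\nabla$-free rule valid in $\B$ holds in $\ISL$ because $\alg{DM}_4$ is a reduct of $\alg{IS}_6$), conservativity follows. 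The only genuinely substantive input is Lemma~\ref{lem:newtopmat} / Corollary~\ref{cor:newtopconsex}, which have already been established; so I expect no real obstacle here — the main thing to be careful about is the bookkeeping that identifies the lattice filters of $\alg{IS}_6$ with the $\ttop$-augmented filters of $\alg{DM}_4$, and the observation that $(\alg{DM}_4)^{\nabla} \cong \alg{IS}_6$, both of which are routine given Proposition~\ref{prop:newtop} and the description of $\alg{IS}_6$ in Section~\ref{sec:isal}.
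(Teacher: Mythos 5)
Your ``alternative'' argument is, modulo two slips, exactly the paper's proof: by Proposition~\ref{prop:onemat} we have $\ISL=\Log\la \alg{IS}_6,\uparrow\! a\ra$, this matrix is $\la \alg{DM}_4,\uparrow\! a\ra^{\nabla}$, and $\la \alg{DM}_4,\uparrow\! a\ra$ is a reduced matrix defining $\B$, so Corollary~\ref{cor:newtopconsex} gives conservativity in one step. The slips: you write $\ISL\supseteq\Log\la \alg{IS}_6,\uparrow\! a\ra$ but then use the opposite inclusion (what you need is the equality of Proposition~\ref{prop:onemat}, or at least $\ISL\subseteq\Log\la \alg{IS}_6,\uparrow\! a\ra$, which holds because that matrix is one of those defining $\ISL$); and $\alg{DM}_4$ is \emph{not} a reduct of $\alg{IS}_6$ --- it is a proper subalgebra of the $\nabla$-free reduct --- so the inclusion $\B\subseteq\ISL$ on $\nabla$-free formulas should be justified by the fact that the $\nabla$-free reduct of every IS-algebra is a De Morgan algebra (or it simply comes for free once you use the equality above).

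Your primary route, by contrast, has a genuine gap in the filter bookkeeping. You assert that every matrix $\la \alg{DM}_4,F\ra$ with $F$ a lattice filter is reduced ``being a reduced model of $\B$'', but \cite[Thm.~3.14]{F97} only says that reduced models of $\B$ have this \emph{form}; it does not say every such matrix is reduced. The two matrices you need in order to match all five filters of $\alg{IS}_6$ --- namely $F=\{0,a,b,1\}$ (the improper filter $\uparrow\!0$, which your list of filters of $\alg{DM}_4$ in fact omits, while counting $\uparrow\!1=\{1\}$ twice) and $F=\emptyset$ (matching $\{\ttop\}$) --- are precisely not reduced, and for them the conclusion of Corollary~\ref{cor:newtopconsex} is actually false: $\Log\la \alg{DM}_4,\{0,a,b,1\}\ra$ is the trivial logic in which every formula is a theorem, whereas $\bot$ is not a theorem of $\Log\la \alg{IS}_6,\uparrow\!0\ra$ since $\bbot\notin\uparrow\!0$ (and similarly $p\vdash q$ holds in the empty-filter matrix but fails in $\la \alg{IS}_6,\{\ttop\}\ra$). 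So the intersection argument as written does not yield conservativity for the family of matrices you then use to identify the intersection with $\ISL$. It can be repaired --- restrict to the reduced filters $\uparrow\! a,\uparrow\! b,\uparrow\!1$, use $\ISL\subseteq\Log\la \alg{IS}_6,\uparrow\! a\ra$ for one inclusion and the De Morgan-reduct observation for the other, or simply invoke Proposition~\ref{prop:onemat} as the paper does --- but as stated the step ``Corollary~\ref{cor:newtopconsex} applies to each $F$'' is not available for the cases your matching requires.
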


\begin{proof}
Recall that
$\ISL = \Log {\la \alg{IS}_6, \uparrow \!  a \ra} $ (Proposition~\ref{prop:onemat}), and
observe that the matrix
$\la \alg{IS}_6, \uparrow \!  a \ra$ can be obtained as $\Mt^{\nabla}$
from the four-element matrix $\Mt = \la \alg{DM}_4, \uparrow \! a \ra $ that defines $\B$. Then the result follows from
Corollary~\ref{cor:newtopconsex}.
\end{proof}

Recall that all reduced matrices for the Belnap-Dunn logic (hence, also all reduced matrices for super-Belnap logics)
are of the form $\la \A, F \ra$, with $\A$ a De Morgan algebra and $F$ a lattice filter~\cite[Thm.~3.14]{F97}.
Thus, Lemma~\ref{lem:newtopmat} and
Corollaries~\ref{cor:newtopmat} and~\ref{cor:newtopconsex} apply, and the latter gives us  the following result.

 \begin{corollary}
\label{cor:twomod}
Let  $\Mt_1 = \la \A_1, F_1 \ra$ and $\Mt_2 = \la \A_2, F_2 \ra$ be reduced matrices for the Belnap-Dunn logic.
If $\Log {\Mt^{\nabla}_1} = \Log {\Mt^{\nabla}_2} $, then $\Log \Mt_1  = \Log \Mt_2 $.
\end{corollary}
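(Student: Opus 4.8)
The plan is to read off the statement as a straightforward contrapositive-style consequence of the conservativity already established in Corollary~\ref{cor:newtopconsex}, together with the fact that a super-Belnap logic is completely determined by its reduced matrix model in the cases at hand. First I would recall that, since $\Mt_1$ and $\Mt_2$ are reduced matrices for the Belnap-Dunn logic, each is of the form $\la \A_i, F_i \ra$ with $\A_i$ a De Morgan algebra and $F_i$ a lattice filter, by~\cite[Thm.~3.14]{F97}; hence Lemma~\ref{lem:newtopmat} and Corollaries~\ref{cor:newtopmat} and~\ref{cor:newtopconsex} all apply to each $\Mt_i$. In particular $\Log \Mt_i^{\nabla}$ is a conservative expansion of $\Log \Mt_i$, meaning the two consequence relations agree on all formulas built without $\nabla$.

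The key step is then purely formal: suppose $\Log \Mt_1^{\nabla} = \Log \Mt_2^{\nabla}$. Restricting both sides to the $\nabla$-free fragment, and invoking conservativity on each side, we get $\Log \Mt_1 = (\Log \Mt_1^{\nabla})\!\restriction_{\nabla\text{-free}} = (\Log \Mt_2^{\nabla})\!\restriction_{\nabla\text{-free}} = \Log \Mt_2$. Here I would spell out slightly more carefully what ``restricting to the $\nabla$-free fragment'' means: for $\Gamma \cup \{\varphi\}$ a set of De Morgan formulas (no occurrence of $\nabla$), $\Gamma \vdash_{\Mt_i} \varphi$ iff $\Gamma \vdash_{\Mt_i^{\nabla}} \varphi$, which is exactly the content of Corollary~\ref{cor:newtopconsex} (via Corollary~\ref{cor:newtopmat}, identifying the $\nabla$-free fragment of $\Log \Mt_i^{\nabla}$ with $\Log \widehat{\Mt_i}$, which in turn equals $\Log \Mt_i$ by Corollary~\ref{cor:newtopmat}). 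Since the hypothesis forces the two expansions to agree on \emph{all} formulas, a fortiori they agree on the $\nabla$-free ones, and the two chains of equalities close up.

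I do not expect any genuine obstacle here; the only thing to be careful about is bookkeeping — making sure that ``$\nabla$-free fragment of $\Log \Mt^{\nabla}$'' is unambiguously the logic $\Log \widehat{\Mt}$, and that $\Log \widehat{\Mt} = \Log \Mt$ as consequence relations over the De Morgan signature (both of which are already in hand from Corollaries~\ref{cor:newtopmat} and~\ref{cor:newtopconsex}, since $\widehat{\Mt}$ was defined precisely as $\la \widehat{\Al}, F \cup \{\ttop\}\ra$ and $\Mt$ was reduced). So the proof is essentially a one-line deduction: restrict the assumed identity of the $\nabla$-expansions to $\nabla$-free formulas and apply conservativity twice. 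The whole argument can be stated in two or three sentences.
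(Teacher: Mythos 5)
Your argument is correct and is exactly the route the paper takes: the corollary is presented there as an immediate consequence of the fact (via \cite[Thm.~3.14]{F97}) that reduced Belnap--Dunn matrices are De Morgan algebras with lattice filters, so Corollary~\ref{cor:newtopconsex} gives conservativity of each $\Log \Mt_i^{\nabla}$ over $\Log \Mt_i$, and restricting the assumed equality to $\nabla$-free formulas closes the argument. No gaps; your bookkeeping about identifying the $\nabla$-free fragment with $\Log \widehat{\Mt_i} = \Log \Mt_i$ matches the paper's setup.
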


Given a super-Belnap logic  $\Lo$, let 
$\Lo^{\nabla}
= \Log {\{  \Mt^{\nabla} :  \Mt \in \Matr^* (\Lo) \}}
$,
%
where each $\Mt^{\nabla} = \la 
{\Al}^{\nabla}, F \cup \{ \ttop \} \ra$
is defined as before.
Since $F \cup \{ \ttop \}$ is a lattice filter of $ 
{\Al}^{\nabla}$, 
every $\Mt^{\nabla}$ is a model of 
$\ISL$.
Therefore,
each logic $\Lo^{\nabla}$ is an extension of $\ISL$.

 \begin{lemma}
\label{lem:consex}
Let  $\Lo$ be a super-Belnap logic. Then $\Lo^{\nabla}$ is a conservative expansion of 
$\Lo$.
\end{lemma}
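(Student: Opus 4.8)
The plan is to reduce the claim to the matrix-level result already established in Corollary~\ref{cor:newtopconsex}, exploiting the fact that a super-Belnap logic is completely determined by its class of reduced models. First I would recall that $\Lo = \Log{\Matr^*(\Lo)}$, and that by~\cite[Thm.~3.14]{F97} every $\Mt \in \Matr^*(\Lo)$ has the form $\la \A, F \ra$ with $\A$ a De Morgan algebra and $F$ a lattice filter, so Corollary~\ref{cor:newtopconsex} applies to each such $\Mt$ individually: $\Log \Mt^{\nabla}$ is a conservative expansion of $\Log \Mt$. The task is to lift this from individual matrices to the whole class, i.e.~to the intersection of the associated consequence relations.

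The inclusion $\Lo \subseteq \Lo^{\nabla}$ on $\nabla$-free formulas (i.e.~that $\Lo^{\nabla}$ is an expansion of $\Lo$) was already observed just before the statement, since each $\Mt^{\nabla}$ is a model of $\ISL$ and hence of nothing weaker than what its $\nabla$-free fragment forces. Concretely, for $\nabla$-free $\Gamma \cup \{\phi\}$, if $\Gamma \vdash_{\Lo} \phi$ then $\Gamma \vdash_{\Mt} \phi$ for every $\Mt \in \Matr^*(\Lo)$, and by Corollary~\ref{cor:newtopconsex} (conservativity in the easy direction) $\Gamma \vdash_{\Mt^{\nabla}} \phi$ as well; intersecting over $\Mt \in \Matr^*(\Lo)$ gives $\Gamma \vdash_{\Lo^{\nabla}} \phi$.

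For the converse — the substantive half of conservativity — suppose $\Gamma \cup \{\phi\}$ consists of $\nabla$-free formulas and $\Gamma \vdash_{\Lo^{\nabla}} \phi$. By definition this means $\Gamma \vdash_{\Mt^{\nabla}} \phi$ for every $\Mt \in \Matr^*(\Lo)$. Fix one such $\Mt = \la \A, F \ra$. By Corollary~\ref{cor:newtopmat} (via Lemma~\ref{lem:newtopmat}) $\Log \widehat{\Mt} = \Log \Mt$, and the $\nabla$-free fragment of $\Log \Mt^{\nabla}$ is exactly $\Log \widehat{\Mt}$ — this is the content of the proof of Corollary~\ref{cor:newtopconsex}. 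Hence $\Gamma \vdash_{\Mt} \phi$. Since this holds for every $\Mt \in \Matr^*(\Lo)$, intersecting yields $\Gamma \vdash_{\Log \Matr^*(\Lo)} \phi$, i.e.~$\Gamma \vdash_{\Lo} \phi$.

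The only point requiring care — and the step I expect to be the main obstacle, though a minor one — is the bookkeeping that $\nabla$-free consequence in $\Lo^{\nabla} = \Log{\{\Mt^{\nabla} : \Mt \in \Matr^*(\Lo)\}}$ really does pass through the fragment claims matrix-by-matrix. This is because $\nabla$-free consequence in a class-defined logic is the intersection of the $\nabla$-free fragments of the component logics: a $\nabla$-free rule holds in $\Log{\CMt}$ iff it holds in each $\Log \Mt^{\nabla}$, and the latter holds iff it holds in the $\nabla$-free fragment of $\Log \Mt^{\nabla}$, which equals $\Log \widehat{\Mt} = \Log \Mt$. So $\Lo^{\nabla}$ restricted to $\nabla$-free formulas is $\bigcap_{\Mt \in \Matr^*(\Lo)} \Log \Mt = \Log \Matr^*(\Lo) = \Lo$, which is precisely the conservativity assertion.
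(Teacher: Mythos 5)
Your proof is correct and follows essentially the same route as the paper: both arguments reduce the claim to the matrix-level Corollary~\ref{cor:newtopconsex} using the fact that $\Lo$ is determined by its reduced models $\Matr^*(\Lo)$, which by~\cite[Thm.~3.14]{F97} are De Morgan-algebra/lattice-filter matrices. The only difference is presentational: you argue both directions directly and explicitly, while the paper gives the nontrivial direction by contradiction and leaves the easy inclusion implicit.
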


\begin{proof}
Suppose, in view of a contradiction, that there exist formulas $\Gamma, \phi$ 
in the $\nabla$-free language
such that
$\Gamma  \vdash \phi$ holds in  $\Lo^{\nabla}$ but
does not hold in $\Lo$.
Then there is $\Mt \in  \Matr^* (\Lo)$ such that 
$\Gamma  \not\vdash_{\Mt} \phi$. 
Then, 
by Corollary~\ref{cor:newtopconsex}, we have that 
$\Gamma  \not\vdash_{\Mt^{\nabla}} \phi$. 
By definition, $\Lo^{\nabla} \subseteq \Log \Mt^{\nabla} $.
Hence, $\Gamma  \not\ders^{\nabla} \phi$. 
\end{proof}

 \begin{corollary}
\label{cor:forlogs}
Let $\Lo_1$, $\Lo_2$ be super-Belnap logics. Then
${\Lo_1} \subseteq {\Lo_2}$ if and only if
${\Lo^{\nabla}_1} \subseteq {\Lo^{\nabla}_2}$.
\end{corollary}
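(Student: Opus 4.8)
The plan is to establish the two implications by completely different means: the direction from ${\Lo_1}\subseteq{\Lo_2}$ to ${\Lo^{\nabla}_1}\subseteq{\Lo^{\nabla}_2}$ semantically, through reduced matrix models, and the converse purely formally, by invoking the conservativity result of Lemma~\ref{lem:consex}.

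For the implication ${\Lo_1}\subseteq{\Lo_2}\implies{\Lo^{\nabla}_1}\subseteq{\Lo^{\nabla}_2}$, the crucial observation is that being a \emph{reduced} matrix is an intrinsic property of a matrix (it merely says that its Leibniz congruence is the identity) and does not refer to any particular logic. Hence, if $\Mt=\la \A, F\ra$ is a reduced model of $\Lo_2$ (so that $\A$ is a De Morgan algebra and $F$ a lattice filter, $\Lo_2$ being super-Belnap, whence $\Mt^{\nabla}$ is defined), then from ${\Lo_1}\subseteq{\Lo_2}\subseteq{\vdash_{\Mt}}$ it follows that $\Mt$ is also a reduced model of $\Lo_1$; that is, $\Matr^*(\Lo_2)\subseteq\Matr^*(\Lo_1)$. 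Applying the construction $\Mt\mapsto\Mt^{\nabla}$ pointwise gives $\{\Mt^{\nabla}:\Mt\in\Matr^*(\Lo_2)\}\subseteq\{\Mt^{\nabla}:\Mt\in\Matr^*(\Lo_1)\}$, and since $\Log$ is antitone with respect to inclusions of classes of matrices, I conclude ${\Lo^{\nabla}_1}=\Log\{\Mt^{\nabla}:\Mt\in\Matr^*(\Lo_1)\}\subseteq\Log\{\Mt^{\nabla}:\Mt\in\Matr^*(\Lo_2)\}={\Lo^{\nabla}_2}$.

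For the converse ${\Lo^{\nabla}_1}\subseteq{\Lo^{\nabla}_2}\implies{\Lo_1}\subseteq{\Lo_2}$, I would use that, by Lemma~\ref{lem:consex}, each $\Lo^{\nabla}_i$ is a conservative expansion of $\Lo_i$, so $\Lo_i$ is precisely the restriction of $\Lo^{\nabla}_i$ to the $\nabla$-free language. Then, given $\nabla$-free $\Gamma\cup\{\phi\}$ such that $\Gamma\vdash\phi$ holds in $\Lo_1$, conservativity makes $\Gamma\vdash\phi$ hold in $\Lo^{\nabla}_1$; the hypothesis ${\Lo^{\nabla}_1}\subseteq{\Lo^{\nabla}_2}$ makes it hold in $\Lo^{\nabla}_2$; and conservativity of $\Lo^{\nabla}_2$ over $\Lo_2$, together with $\Gamma\cup\{\phi\}$ being $\nabla$-free, brings us back to $\Gamma\vdash\phi$ holding in $\Lo_2$. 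Hence ${\Lo_1}\subseteq{\Lo_2}$.

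I do not expect a genuine obstacle here: the statement is in effect a bookkeeping consequence of Lemma~\ref{lem:consex} together with the elementary behaviour of $\Log$ under inclusions of matrix classes. The single point that needs a moment's care is the remark, used in the forward direction, that reducedness of a matrix is logic-independent, so that a reduced model of the stronger logic is automatically a reduced model of the weaker one; everything else is routine. (Alternatively, the forward direction can be phrased by simply noting that $\Lo^{\nabla}_2$ is the logic determined by a subclass of the matrices that determine $\Lo^{\nabla}_1$, hence an extension of it.)
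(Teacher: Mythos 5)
Your proof is correct and follows essentially the same route as the paper: the forward direction via the antitone inclusion $\Matr^*(\Lo_2)\subseteq\Matr^*(\Lo_1)$ and the pointwise $\Mt\mapsto\Mt^{\nabla}$ construction, the converse via the conservativity supplied by Lemma~\ref{lem:consex}. The only stylistic quibble is that for the step from $\Gamma\vdash_{1}\phi$ to $\Gamma$ entailing $\phi$ in $\Lo^{\nabla}_1$ you only need that $\Lo^{\nabla}_1$ is an expansion of $\Lo_1$, not full conservativity, which is reserved for the final descent from $\Lo^{\nabla}_2$ to $\Lo_2$ --- exactly as in the paper.
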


\begin{proof}
Assuming ${\Lo_1}  \subseteq {\Lo_2} $, we have 
$
\Matr^* (\Lo_2) \subseteq  \Matr^* (\Lo_1)
$. Hence,  
$
\{  \Mt^{\nabla} :  \Mt \in \Matr^* (\Lo_2) \} \subseteq \{  \Mt^{\nabla} :  \Mt \in \Matr^* (\Lo_1) \}
$, which entails 
${\Lo^{\nabla}_1} \subseteq {\Lo^{\nabla}_2}$.
Conversely, 
let ${\Lo^{\nabla}_1} \subseteq {\Lo^{\nabla}_2}$, and let $\Gamma, \phi$ be formulas
(in the language of $\B$
) such that
$\Gamma \ders_1 \phi$. 
The latter assumption 
gives us that 
$\Gamma \Lo^{\nabla}_1 \phi$ 
and, therefore,  
also  $\Gamma \Lo^{\nabla}_2 \phi$. Then, by Lemma~\ref{lem:consex}, we conclude $\Gamma \Lo_2 \phi$. 
\end{proof}

 \begin{corollary}
\label{cor:latlog}
The map given by $\Lo \ \mapsto \ \Lo^{\nabla}$ is 
 an embedding of the lattice of super-Belnap logics
 into the lattice of extensions of $\ISL$.
\end{corollary}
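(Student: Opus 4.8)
The plan is to verify that the map $\Lo \mapsto \Lo^{\nabla}$ is an order-embedding between the two complete lattices, since an order-embedding between complete lattices that is also a lattice homomorphism — or, for that matter, any order-embedding whose domain is a lattice — automatically preserves whatever meets and joins exist; here it suffices to exhibit an order-embedding, as a map between posets that is injective and reflects the order is what is usually meant by a \emph{lattice embedding} in this context, and both structures are known to be complete lattices.

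First I would recall that Corollary~\ref{cor:forlogs} already does almost all the work: it states that for super-Belnap logics $\Lo_1, \Lo_2$ one has ${\Lo_1} \subseteq {\Lo_2}$ if and only if ${\Lo^{\nabla}_1} \subseteq {\Lo^{\nabla}_2}$. The ``only if'' direction shows the map is monotone, and the ``if'' direction shows it reflects the order. Together these give that the map is an order-embedding; in particular, taking the two inclusions ${\Lo_1} \subseteq {\Lo_2}$ and ${\Lo_2} \subseteq {\Lo_1}$ simultaneously, one sees that $\Lo^{\nabla}_1 = \Lo^{\nabla}_2$ forces $\Lo_1 = \Lo_2$, so the map is injective. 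Finally, the codomain is correct: we noted before the statement of Lemma~\ref{lem:consex} that each $\Mt^{\nabla}$ is a model of $\ISL$, whence $\ISL \subseteq \Lo^{\nabla}$, so $\Lo^{\nabla}$ genuinely lies in the lattice of extensions of $\ISL$.

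It then remains only to observe that an injective, order-reflecting monotone map between posets that happen to be lattices is a lattice embedding: if $f$ is such a map, then $f(\Lo_1 \vee \Lo_2)$ is an upper bound of $\{f(\Lo_1), f(\Lo_2)\}$ by monotonicity, and it is the least such because any logic $\K^{\nabla}$ above both $f(\Lo_i)$ satisfies, by order-reflection applied to the preimages (which exist since we work with the image sublattice), $\K \supseteq \Lo_1 \vee \Lo_2$, hence $f(\Lo_1 \vee \Lo_2) \subseteq \K^{\nabla}$; dually for meets. Strictly speaking, to argue this cleanly one should check that $\{\Lo^{\nabla} : \Lo \text{ super-Belnap}\}$, ordered by inclusion, is order-isomorphic to the lattice of super-Belnap logics, which is exactly what Corollary~\ref{cor:forlogs} delivers, and an order-isomorphism between lattices is automatically a lattice isomorphism.

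The main obstacle — if there is one — is purely a matter of what one wants ``embedding of lattices'' to mean: if one insists on preservation of \emph{infinite} meets and joins (the lattices being complete), one cannot simply cite Corollary~\ref{cor:forlogs}, because an order-embedding need not preserve infinite joins in general. However, since the image is a \emph{full} suborder isomorphic to the whole source lattice via a bijective order-iso, all meets and joins computed \emph{within the image} agree with those of the source; whether these coincide with the meets and joins of the \emph{ambient} lattice of extensions of $\ISL$ is a separate and genuinely more delicate question (the join in the sublattice could a priori be strictly below the join taken in the larger lattice), and I would either restrict the claim to the finitary/order-theoretic embedding or note that completeness of the source lattice plus the order-iso suffices to transport joins, remarking that the ambient join is irrelevant to the notion of ``sublattice embedding'' being used here. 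In the write-up I would simply state: the result is immediate from Corollary~\ref{cor:forlogs}, together with the observation that $\ISL \subseteq \Lo^{\nabla}$ for every super-Belnap $\Lo$.

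\begin{proof}
By Corollary~\ref{cor:forlogs}, for all super-Belnap logics $\Lo_1, \Lo_2$ we have ${\Lo_1} \subseteq {\Lo_2}$ if and only if ${\Lo^{\nabla}_1} \subseteq {\Lo^{\nabla}_2}$. Applying this to both ${\Lo_1} \subseteq {\Lo_2}$ and ${\Lo_2} \subseteq {\Lo_1}$, we get that ${\Lo^{\nabla}_1} = {\Lo^{\nabla}_2}$ implies ${\Lo_1} = {\Lo_2}$; hence the map $\Lo \mapsto \Lo^{\nabla}$ is injective, and it is an order-embedding. Moreover, as observed before Lemma~\ref{lem:consex}, every matrix $\Mt^{\nabla} = \la {\Al}^{\nabla}, F \cup \{ \ttop \} \ra$ with $\la \A, F \ra \in \Matr^*(\Lo)$ is a model of $\ISL$, since $F \cup \{ \ttop \}$ is a lattice filter of ${\Al}^{\nabla}$; therefore $\ISL \subseteq \Lo^{\nabla}$, so $\Lo^{\nabla}$ is an extension of $\ISL$. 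Thus $\Lo \mapsto \Lo^{\nabla}$ maps the lattice of super-Belnap logics injectively and order-isomorphically onto a subposet of the lattice of extensions of $\ISL$; being an order-isomorphism onto its image, it preserves all existing meets and joins, and is in particular a lattice embedding.
\end{proof}
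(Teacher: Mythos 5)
Your proposal is correct and follows exactly the route the paper intends: Corollary~\ref{cor:latlog} is stated without proof precisely because it is an immediate consequence of Corollary~\ref{cor:forlogs} (monotonicity plus order-reflection, hence injectivity) together with the remark made just before Lemma~\ref{lem:consex} that each $\Mt^{\nabla}$ is a model of $\ISL$, so that $\Lo^{\nabla}$ indeed lies in the lattice of extensions of $\ISL$. Your additional caveat about order-embeddings versus preservation of ambient (possibly infinite) meets and joins is a reasonable clarification of what ``embedding'' means here, but it does not change the substance of the argument.
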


 \begin{corollary}
\label{cor:cardlat}
The lattice of extensions of $\ISL$ has (at least) the cardinality of the continuum.
\end{corollary}

\begin{proof}
By Corollary~\ref{cor:latlog} and the observation that the lattice of super-Belnap logics
contains continuum many logics
~\cite[Thm.~4.13]{AlPrRi16}.
\end{proof}

In fact, in the light of the results of Section~\ref{sec:ax}, we shall be able to 
prove that
there are at least continuum many \emph{finitary} extensions of  $\ISL$.

\section{Axiomatizing IS-logics}
\label{sec:ax}

{

In~\cite{Ca-MSc,CaFi18}, the logic $\ISL$ is axiomatized
by means of a Gentzen calculus.  
In this Section we tackle the problem of 
axiomatizing $\ISL$ and its extensions by means of Hilbert calculi.
From a technical point of view, we shall take profit from the theory of
multiple-conclusion calculi,   a
generalization of 
traditional Hilbert-style calculi
in which 
 the 
inference rules can have more than one conclusion (with a disjunctive reading). 
In  these calculi proofs are typically ramified instead of sequential. 
Multiple-conclusion
calculi can be used to study single conclusion logics, but also correspond to a generalized notion of logic due to 
D.~Scott and developed by D.J.~Shoesmith and T.J.~Smiley.
We recall some of the basic definitions and results below; for further details see~\cite{ShSm78,CaMaXX}.

A \emph{multiple-conclusion consequence relation} (logic) is a relation ${\derm} \subseteq \wp Fm\times \wp Fm$ satisfying the following conditions.
For every $\Gamma,\Gamma',\Delta,\Delta',\Lambda, T, F \subseteq Fm$,
\begin{enumerate}[(i)]
\item $\Gamma\,\derm\, \Delta$ whenever $\Gamma\cap \Delta\neq \emptyset$ (\emph{overlap}),
\item $\Gamma,\Gamma'\,\derm\, \Delta,\Delta'$ whenever $\Gamma\,\derm\,\Delta$ 
 (\emph{dilution}),
\item  $\Gamma\,\derm\, \Delta$ whenever $\Gamma, T\,\derm\, \Delta, F$ 
 for every partition $\tuple{T,F}$ of $\Lambda$ 
 (\emph{cut for sets}),
\item  $\Gamma^\sigma\,\derm\, \Delta^\sigma$ for every substitution $\sigma$ whenever $\Gamma\,\derm\, \Delta$  (\emph{substitution invariance}).
\end{enumerate}

Given 
a set of multiple-conclusion rules $\R\subseteq \wp Fm\times \wp Fm$, we denote by 
$\derm_\R$  the smallest multiple-conclusion consequence relation containing $\R$
(hence, 
$\R$ axiomatizes $\derm_\R$). 
From a proof-theoretic perspective, we have  $\Gamma\derm_\R\Delta$ whenever there is a 
{labelled tree-proof } 
whose root is labelled by $\Gamma$ and the leaf of every non-discontinued branch is labelled with a formula in $\Delta$. 
Every class of matrices $\CMt$ determines a multiple-conclusion logic
defined as follows:
we let $\Gamma\,\derm_\CMt\, \Delta$ whenever, for every valuation $v \in \Val(\Mt)$ over a matrix  $ \Mt = \la \A, D \ra \in \CMt$, we have  that $v(\Gamma)\subseteq D$ implies $v(\Delta)\cap D\neq \emptyset$.


Multiple-conclusion logics smoothly generalize Tarskian logics and their proof-theoretic and semantical definitions.
Indeed, 
for every multiple-conclusion logic $\derm$,
we have that 
${\ders_\derm} = {\derm} \cap (\wp L\times L)$ is a Tarskian consequence relation~\cite[Def.~1.5]{F16}. 
We call $\ders_\derm$ the \emph{single-conclusion companion} of $\derm$ and, given a set
of multiple-conclusion rules $\R$, we shall write $\ders_\R$ instead of $\ders_{\derm_\R}$.
%
The following remark contains a few useful facts that can be easily deduced from Sections~5.2 and~17.3 
  of~\cite{ShSm78}.
\begin{remark}\label{rem:svsm}
The $\emph{sign}$ of a multiple-conclusion relation $\derm$ is  \emph{negative} if 
 {\color{blue}$Fm\derm \emptyset$}. 
and is \emph{positive} otherwise. 
We denote by $\simeq $ the equivalence relation that identifies two logics $\derm_1$  and $\derm_2$ that may differ only in the sign, that is, we let
 ${\derm_1}\simeq {\derm_2} $ whenever 
 {\color{blue}${ \derm_1 } \cup\{(Fm,\emptyset)\}= {\derm_2} \cup \{(Fm,\emptyset)\}$.}
Let ${\derm_1} \simeq {\derm_2}$. Then
 $\ders_{\derm_1}= \ \ders_{\derm_2}$ and also, if 
  ${\derm_1}\subseteq {\derm} \subseteq {\derm_2}$, then ${\derm_1} \simeq {\derm_2} \simeq {\derm}$.
 Let $\PR (\CMt)$ be the closure under 
 products of the class $\CMt$ (products among matrices are defined as  usual for first-order structures; see e.g.~\cite[p.~225-6]{F16}).
The following observations are well known:
 \begin{enumerate}[(i)]
 \item 
 $\ders_{\derm_\CMt} = \Log \CMt =\Log \PR(\CMt) $. 
 \item 
If
 $\Log \CMt = {\ders_\R}$  for $\R\subseteq \wp(L)\times L$, then   ${\derm_\R} \simeq {\derm_{\PR(\CMt)}}$. Therefore, if 
 $\Log \CMt_1 =\Log \CMt_2 $ then ${\derm_{\PR(\CMt_1)} } \simeq { \derm_{\PR(\CMt_2)}} $.
 \end{enumerate}
 \end{remark}
 
%
%
%
%
%
%
%
%
%


Under certain conditions, a (finite) single-conclusion axiomatization can be obtained algorithmically  from a (finite) multiple-conclusion axiomatization.
The following result covers 
the case 
of some of the logics that interest us here. 
%
%
%


Given a finite set $\Phi = \{ \phi_1, \ldots, \phi_n \} \subseteq Fm$ and $\psi\in Fm$,  
 let $\bigvee\Phi := ((\phi_1 \lor \phi_2  ) \lor \ldots )\lor  \phi_n $,
 and let
$\Phi\lor \psi =\{\varphi\lor \psi:\varphi\in \Phi\}$.


\begin{theorem}\cite[Thm. 5.37]{ShSm78}
\label{disjax}
Let   $\R$ be a set of multiple-conclusion rules.
Suppose $\ders_{\derm_\R}$ satisfies, for all $\Gamma \cup \{\varphi, \psi, \xi \} \subseteq Fm$, the following property: $\Gamma,\varphi\lor \psi\, {\ders_{\derm_\R}}\xi$ if and only if 
$\Gamma,\varphi\, {\ders_{\derm_\R}}\xi$ and $\Gamma,\psi\, {\ders_{\derm_\R}}\xi$. Then $\ders_{\derm_\R}$ is axiomatized by the set $\R^\lor$
consisting of the following rules:
\begin{enumerate}[(i)]
\item $\mathsf{r}^\lor=\frac{}{\varphi}$ for each  $\mathsf{r}=\frac{}{\varphi}\in \R$, 
 \item $\mathsf{r}^\lor=\frac{\Gamma\lor p_0}{(\bigvee \Delta)\lor p_0}$ for each $\mathsf{r}=\frac{\Gamma}{\Delta}\in \R$,
  \item $\frac{p}{p\lor q}$,  $\frac{p\lor q}{q\lor p}$,  $\frac{p\lor p}{p}$ and $\frac{p\lor (q\lor r)}{(p\lor q)\lor r}$
\end{enumerate}
where $p_0$ is a variable not occurring in $\R$.
 
\end{theorem}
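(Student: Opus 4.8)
The plan is to show the two inclusions between $\ders_{\derm_\R}$ and $\ders_{\R^\lor}$ separately, exploiting the hypothesis on $\lor$ to reduce everything to the single-conclusion fragment. First I would observe that every rule in $\R^\lor$ is \emph{derivable} in $\derm_\R$: the rules in group~(iii) are the standard properties of a join-semilattice (idempotence, commutativity, associativity) and the one-sided weakening $\frac{p}{p\lor q}$, all of which follow from the $\lor$-hypothesis applied to $\derm_\R$ itself; the rules in group~(ii) are obtained from $\mathsf{r}=\frac{\Gamma}{\Delta}\in\R$ by disjoining $p_0$ on every formula and then, for the multiple-conclusion conclusion $\Delta$, using cut together with the $\lor$-hypothesis to collapse $\Delta\lor p_0$ into $(\bigvee\Delta)\lor p_0$. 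Hence $\ders_{\R^\lor}\subseteq\ders_{\derm_\R}$.

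For the converse inclusion, the key is that $\ders_{\derm_\R}=\ders_\R$ is finitary (it is generated by a set of rules) and that the $\lor$-hypothesis lets us distribute a disjunction appearing in the premisses across a derivation. Concretely, suppose $\Gamma\ders_{\derm_\R}\xi$; by finitariness we may assume $\Gamma$ finite. I would then argue that any multiple-conclusion proof witnessing $\Gamma\derm_\R\Delta$ for some finite $\Delta\subseteq Fm$ with $\Gamma\cup\Delta$ single-conclusion can be transformed into a single-conclusion $\R^\lor$-proof of $\Gamma\ders_{\R^\lor}\bigvee\Delta$, by induction on the height of the ramified proof: each application of a multiple-conclusion rule $\mathsf{r}=\frac{\Gamma'}{\Delta'}$ is simulated by the corresponding rule $\mathsf{r}^\lor$ (disjoining the accumulated ``side disjunct'' $p_0$, instantiated to the disjunction of the other open leaves), and the branching of the tree is handled by the $\lor$-hypothesis, which is exactly what licenses reasoning by cases on $\varphi\lor\psi$. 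When $\Delta=\{\xi\}$ this gives $\Gamma\ders_{\R^\lor}\xi$ directly.

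The main obstacle, and the step requiring the most care, is the bookkeeping in this proof transformation: in a multiple-conclusion derivation the ``active'' formulas on the right-hand side change as one moves down the tree, and one must track, at each node, the disjunction of all the right-hand formulas of the still-open sibling branches, feeding it into the variable $p_0$ of the rules $\mathsf{r}^\lor$ (this is why $p_0$ must be fresh, i.e.\ not occurring in $\R$). Making this simultaneous induction precise — and checking that the instances of the semilattice rules in group~(iii) suffice to reassociate and permute the accumulated disjuncts into the canonical left-nested form $\bigvee\Delta$ — is the technical heart of the argument. Since this is exactly the content of~\cite[Thm.~5.37]{ShSm78}, I would either reproduce the argument along these lines or simply cite it, noting that the only nontrivial hypothesis to verify in our applications is the stated if-and-only-if property of $\lor$ with respect to $\ders_{\derm_\R}$.
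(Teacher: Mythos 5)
The paper itself does not prove this statement: it is imported verbatim from Shoesmith and Smiley (\cite[Thm.~5.37]{ShSm78}), so your fallback option of ``simply cite it'' is in fact exactly what the authors do, and for the purposes of this paper the only thing to check is the proof-by-cases hypothesis in each application. So there is no in-paper argument to compare yours against; what can be assessed is the sketch you give in case you chose to reproduce the proof.

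On that score there is one genuine gap, in the completeness direction. Your simulation of a multiple-conclusion tree-proof by an $\R^\lor$-derivation is the right skeleton (carry the side disjunct in $p_0$, handle discontinued branches, reassociate with the group~(iii) rules), but you say the branching of the tree ``is handled by the $\lor$-hypothesis, which is exactly what licenses reasoning by cases on $\varphi\lor\psi$''. The hypothesis of the theorem is a property of $\ders_{\derm_\R}$, the relation you are trying to axiomatize; at the branching step of the induction what you actually need is reasoning by cases \emph{inside the target calculus} $\ders_{\R^\lor}$ (from $\Gamma'\subseteq A(n)$ the rule $\mathsf{r}^\lor$ with $p_0\mapsto\bigvee\Delta$ gives $A(n)\ders_{\R^\lor}(\bigvee\Delta')\lor\bigvee\Delta$, and the inductive hypotheses give $A(n),\delta_i\ders_{\R^\lor}\bigvee\Delta$ for each $\delta_i\in\Delta'$; to conclude $A(n)\ders_{\R^\lor}\bigvee\Delta$ you must eliminate that disjunction in $\ders_{\R^\lor}$). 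Using the stated hypothesis here is circular, since it concerns the other relation and the two are only known to coincide once the theorem is proved. The missing ingredient is a separate lemma: because every rule of $\R^\lor$ is in $\lor$-form (this is where the freshness of $p_0$ and the commutativity/associativity/idempotence rules do their work), any $\R^\lor$-derivation can be uniformly disjoined with a fixed side formula, whence $\ders_{\R^\lor}$ itself admits proof by cases; with that lemma your tree induction goes through, and the stated hypothesis is then needed only for the converse (soundness) inclusion, which you handle correctly. A secondary looseness: finitariness of $\ders_{\derm_\R}$ does not follow merely from being ``generated by a set of rules'' (cut for sets is an infinitary closure condition); it relies on the rules being finite and on the compactness results of \cite{ShSm78}.
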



We now proceed to explain how the results of Section~\ref{sec:sem} together with the general considerations on multiple-conclusion
logics introduced above are going to be help us deal with extensions of
$\ISL$.

\subsection{Adding $\nabla$ to the Belnap-Dunn logic
}

Let 
$\Sigma = \{ \land, \lor, \nnot , \bot, \top \}$
be 
the language of $\B$, and
let 
$\Sigma^{\nabla}$ be the expansion of $\Sigma$ with the unary connective $\nabla$
(i.e.~the language of $\ISL$ 
). 
Given a $\Sigma$-matrix 
$\Mt=\tuple{\A, D}$, 
let ${\Mt}^\nabla=\tuple{ \A^{\nabla}, 
D\cup\{ \ttop \}}$ 
be the $\Sigma^{\nabla}$-matrix
with underlying algebra 
$\A^{\nabla}$ defined as in Section~\ref{sec:isal} (cf.~Propositiion~\ref{prop:newtop}).
Let us denote by $\widehat{\Mt} $
the $\Sigma$-fragment 
of $\Mt^{\nabla}$.
Observe that, if  $\Mt = \la \A, D \ra $ with $\A$ a De Morgan algebra,
then $\widehat{\Mt} $ is precisely  the matrix 
considered in Corollary~\ref{cor:newtopmat}.
Given a class of $\Sigma$-matrices $\CMt$, we let
%
$\CMt^\nabla :=\{\Mt^\nabla:\Mt\in \CMt\}$ and $\widehat{\CMt} : =\{ \widehat{\Mt} :\Mt\in \CMt\}$.

The following Theorem contains a generic recipe for axiomatizing the multiple-conclusion
logic determined by the class $\CMt^\nabla$, assuming we have a set of rule $\R$
that axiomatizes the multiple-conclusion logic determined by $\widehat{\CMt}$.

\begin{theorem}\label{nablaax}

Let  $\CMt$ be a class of $\Sigma$-matrices.
If ${\derm_{\widehat{\CMt}}} \simeq {\derm_{\R}}$, 
 then ${\derm_{\CMt^\nabla}} = {\derm_{\R\cup \R_\nabla}} $, 
 where $\R_\nabla$ consists of the following rules:

 $$  \frac{}{\nabla p\,,\, \nnot \nabla p}  {\ \mathsf{r}_1} \qquad
   { \frac{\nabla p   }{\nnot\nabla\nnot\nabla  p}}  {\ \mathsf{r}_2} \qquad
    { \frac{ \nabla\nabla  p}{ \nabla p  }}  {\ \mathsf{r}_3} \qquad
   \frac{\nabla p\,,\, \nnot\nabla p}{}  {\ \mathsf{r}_4} 
  $$  
  
  $$
   { 
   \frac{\nnot\nabla p}{\nabla \nnot p}}  {\ \mathsf{r}_5}\qquad
   \frac{\nabla \nnot\nnot p }{\nabla p} {\ \mathsf{r}_6} \qquad 
      \frac{\nabla p}{\nabla \nnot\nnot p } {\ \mathsf{r}_7}$$

  $$
  \frac{\nabla(p\land q)}{\nabla p} {\ \mathsf{r}_8}\qquad 
 \frac{\nabla(p\land q) }{\nabla q} {\ \mathsf{r}_9} \qquad
\frac{\nabla \nnot (p\land q)}{\nabla \nnot p,\nabla \nnot q} {\ \mathsf{r}_{10}}
$$

 $$ 
  {
  \frac{ \nabla \nnot p}{ \nabla \nnot (p\land q) }} {\ \mathsf{r}_{11}} 
  \qquad 
  {
  \frac{\nabla \nnot q}{\nabla \nnot (p\land q) }}{\ \mathsf{r}_{12}} 
  \qquad 
  {
  \frac{ \nabla p \,,\,  \nabla q}{ \nabla  (p\land q)} }{\ \mathsf{r}_{13}}$$ 

 $$
 \frac{\nabla \nnot (p\lor  q)}{\nabla\nnot p}{\ \mathsf{r}_{14}}\qquad 
 \frac{\nabla \nnot (p\lor q)}{\nabla\nnot q}{\ \mathsf{r}_{15}} 
 \qquad \frac{\nabla(p\lor  q)}{\nabla  p,\nabla  q}{\ \mathsf{r}_{16}} 
 $$ 


$$
  \frac{ \nabla   p  }{  \nabla (p\lor q)}{\ \mathsf{r}_{17}}
  \qquad   \frac{   \nabla   q}{  \nabla (p\lor q)}{\ \mathsf{r}_{18}} 
  \qquad
  \frac{ \nabla \nnot  p \,,\,   \nabla \nnot  q}{  \nabla \nnot (p\lor q)} 
  {\ \mathsf{r}_{19}} 
$$

$$\frac{}{\nnot \nabla \bot}
\ \mathsf{r}_{20}
\qquad\frac{}{\nnot \nabla \nnot \top}
\ \mathsf{r}_{21}
 $$ 

\end{theorem}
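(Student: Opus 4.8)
The plan is to prove the two inclusions ${\derm_{\R\cup\R_\nabla}} \subseteq {\derm_{\CMt^\nabla}}$ and ${\derm_{\CMt^\nabla}} \subseteq {\derm_{\R\cup\R_\nabla}}$ separately, the first being the routine soundness check and the second being the genuine work.

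\textbf{Soundness.} First I would verify that every matrix $\Mt^\nabla$ with $\Mt\in\CMt$ validates all the rules in $\R_\nabla$. This is a finite case analysis: by construction the elements of $\A^\nabla$ split into the ``old'' elements of $A$ and the two new elements $\bbot,\ttop$, and $\nabla$ takes value $\ttop$ on everything except $\bbot$, where it takes value $\bbot$. So for any valuation $v$ into $\A^\nabla$, $v(\nabla p)\in\{\bbot,\ttop\}$, and $v(\nabla p)\in D\cup\{\ttop\}$ iff $v(\nabla p)=\ttop$ iff $v(p)\neq\bbot$. From here each of $\mathsf{r}_1$--$\mathsf{r}_{21}$ reduces to an elementary fact: e.g. $\mathsf{r}_1$ says $\ttop$ or $\nnot\ttop$ is designated (true since $\ttop$ is), $\mathsf{r}_4$ says we cannot have both $\nabla p$ and $\nnot\nabla p$ designated (true since $\nnot\ttop=\bbot$), $\mathsf{r}_{20}$ says $\nnot\nabla\bot=\nnot\bbot=\ttop$ is designated, and the rules $\mathsf{r}_8$--$\mathsf{r}_{19}$ governing $\nabla$ on $\land,\lor$ and under $\nnot$ follow from the observation that, for $x,y$ ranging over $A^\nabla$, $x\land_{\A^\nabla}y=\bbot$ iff $x=\bbot$ or $y=\bbot$, and dually for $\lor$, together with the De Morgan behaviour of $\nnot$ on $A^\nabla$. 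Combined with the soundness of $\R$ over $\widehat{\CMt}$ (the $\Sigma$-reducts of the $\Mt^\nabla$, via the hypothesis ${\derm_{\widehat{\CMt}}}\simeq{\derm_\R}$), this gives one inclusion.

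\textbf{Completeness.} For the converse I would argue contrapositively: suppose $\Gamma\mathbin{\not\derm_{\R\cup\R_\nabla}}\Delta$; I want a matrix $\Mt^\nabla$ (with $\Mt\in\CMt$) and a valuation witnessing $\Gamma\mathbin{\not\derm_{\CMt^\nabla}}\Delta$. The natural route is via the Lindenbaum–style / characteristic-matrix construction for multiple-conclusion calculi: take a maximal pair $\la T,F\ra$ extending $\la\Gamma,\Delta\ra$ that is still not a consequence, build the quotient of $\Fm_{\Sigma^\nabla}$ by inter-derivability, and designate the classes of formulas in $T$. One then shows that this canonical matrix $\M$ validates exactly $\derm_{\R\cup\R_\nabla}$, and — this is the crux — that $\M$ is (isomorphic to, or embeds into a product of) some $\Mt^\nabla$ with $\Mt\in\CMt$. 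The key structural step is to isolate, inside $\M$, the ``$\nabla$-image'': the rules $\mathsf{r}_1,\mathsf{r}_4$ force $\nabla p$ to behave two-valuedly (designated or not, never both, always one), and $\mathsf{r}_2,\mathsf{r}_3,\mathsf{r}_5$--$\mathsf{r}_{19}$ pin down how this two-valued part interacts with the connectives exactly as $\ttop,\bbot$ do in $\A^\nabla$; meanwhile $\mathsf{r}_{20},\mathsf{r}_{21}$ tie it to the constants. One then checks that collapsing the ``designated-$\nabla$'' part to a single top $\ttop$ and the ``undesignated-$\nabla$'' part to a single bottom $\bbot$, leaving the rest as a De Morgan algebra $\A$ on which the $\Sigma$-fragment rules $\R$ hold, recovers precisely the shape $\A^\nabla$; that $\A$, being a model of $\derm_{\R}$ hence (up to sign, by Remark~\ref{rem:svsm}) of $\derm_{\widehat{\CMt}}$, can be taken in $\widehat{\CMt}$, so $\A^\nabla$ arises as $\Mt^\nabla$ for $\Mt\in\CMt$. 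Cleaner still would be to reduce to the single-conclusion / matrix-semantics statements already proved: invoke Corollary~\ref{cor:newtopmat} (that $\Log\Mt=\Log\widehat\Mt$) and the fact from Remark~\ref{rem:svsm} relating $\derm_\CMt$ and $\derm_{\PR(\CMt)}$, so that it suffices to verify that any reduced model of $\derm_{\R\cup\R_\nabla}$ has underlying algebra of the form $\A^\nabla$ — which is exactly the content of the ``$\nabla$-part is two-valued and behaves like $\{\bbot,\ttop\}$'' analysis above, now carried out on an arbitrary reduced model rather than the Lindenbaum one.

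\textbf{Main obstacle.} I expect the hard part to be precisely this representation step: showing that the rules $\R_\nabla$ are \emph{strong enough} that every reduced matrix model of $\derm_{\R\cup\R_\nabla}$ is (up to isomorphism) of the form $\Mt^\nabla$ for some $\Mt\in\CMt$. Soundness and the general multiple-conclusion machinery are routine; the delicate bookkeeping is verifying that the particular finite list $\mathsf{r}_1$--$\mathsf{r}_{21}$ captures \emph{all} the equational behaviour of $\nabla$ on $\A^\nabla$ (idempotency-type laws $\mathsf{r}_2,\mathsf{r}_3$, interaction with $\nnot$ via $\mathsf{r}_5$--$\mathsf{r}_7$, distribution over $\land$ and $\lor$ and over $\nnot(p\land q)$, $\nnot(p\lor q)$ via $\mathsf{r}_8$--$\mathsf{r}_{19}$, constants via $\mathsf{r}_{20},\mathsf{r}_{21}$), with nothing missing, so that no ``exotic'' reduced model sneaks in. Once that normal-form-of-reduced-models claim is in hand, equality of the two multiple-conclusion logics follows by the standard completeness-of-$\derm_\R$-with-respect-to-its-matrix-models argument together with the results of Section~\ref{sec:sem}.
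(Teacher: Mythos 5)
Your soundness half is fine and matches the paper. The gap is in the completeness half, at the step where you pass from ``$\A$ is a model of $\derm_{\R}$, hence (up to sign) of $\derm_{\widehat{\CMt}}$'' to ``$\A$ can be taken in $\widehat{\CMt}$'', and equivalently in the claim that every reduced model of $\derm_{\R\cup\R_\nabla}$ is of the form $\Mt^\nabla$ with $\Mt\in\CMt$. The hypothesis only says that the two \emph{consequence relations} coincide (up to sign); it says nothing about the model classes, and $\CMt$ is an arbitrary class of $\Sigma$-matrices with no closure properties. Matrix models of $\derm_{\widehat{\CMt}}$ vastly outnumber the members of $\widehat{\CMt}$: for instance, with $\CMt=\{\tuple{\alg{DM}_4,\uparrow\! a}\}$, the submatrix $\tuple{\alg{K}_3,\uparrow\! a}$ is a model of the multiple-conclusion logic, so $\tuple{\alg{IS}_5,\uparrow\! a}$ is a (reduced) model of $\derm_{\R\cup\R_\nabla}$ that is not $\Mt^\nabla$ for any $\Mt\in\CMt$. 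So even if your Lindenbaum/representation analysis shows the canonical countermodel has the shape $\A^\nabla$ with $\A$ a model of $\derm_\R$, this does not yield a refuting valuation on some member of $\CMt^\nabla$, which is what the inclusion $\derm_{\CMt^\nabla}\subseteq\derm_{\R\cup\R_\nabla}$ requires; to bridge that you would have to fall back on the semantic definition of $\derm_{\widehat{\CMt}}$ anyway.

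That fallback is exactly the paper's argument, which never represents an abstract countermodel at all. Given $\Gamma\not\derm_{\R\cup\R_\nabla}\Delta$, cut for sets yields a partition $\tuple{T,F}$ of all $\Sigma^\nabla$-formulas with $\Gamma\subseteq T$, $\Delta\subseteq F$ and $T\not\derm_{\R\cup\R_\nabla}F$; rules $\mathsf{r}_1$ and $\mathsf{r}_4$ force exactly one of $\nabla\varphi,\nnot\nabla\varphi$ into $T$ (in particular $F\neq\emptyset$, which disposes of the sign issue in $\simeq$). Since $T\not\derm_{\R}F$, the hypothesis gives $T\not\derm_{\widehat{\CMt}}F$, hence an honest $\Sigma$-valuation $v$ into $\widehat{\Mt}$ for some concrete $\Mt\in\CMt$ separating $T$ from $F$ (with $\nabla$-formulas treated as opaque). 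The real work is then the surgical redefinition of $v$ into a $\Sigma^\nabla$-valuation $v'$ on $\Mt^\nabla$, sending $\varphi$ to $\ttop$ or $\bbot$ according to whether $\nnot\nabla\nnot\varphi$ or $\nnot\nabla\varphi$ lies in $T$ (with special clauses for $\land$ and $\lor$), and the rule-by-rule verification that $v'$ is a homomorphism still separating $T$ from $F$. So the delicate part is not a normal-form theorem for reduced models (which is false as you state it) but the construction and checking of $v'$; as written, your proof cannot be completed along the route you describe.
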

\begin{proof}

Checking the soundness of the new rules is routine. We give only a couple of examples.
Let $v$ be a valuation over a matrix $\Mt^\nabla$.
The rule $\mathsf{r}_1$ is sound in $\Mt^\nabla$, for either $v(\nabla \varphi)=\ttop$ (if $v(\varphi)\neq \bbot$) or $v(\nnot \nabla \varphi)=\ttop$ (if $v(\varphi)= \bbot$).
Regarding rule $\mathsf{r}_2$, we have that, if $v(\nabla \varphi)=\ttop$ then $v(\nnot\nabla \varphi)=v(\nabla\nnot\nabla \varphi)=\bbot$, so $v(\nnot\nabla\nnot\nabla \varphi)=\ttop$.
  
 For completeness, assume 
 $\Gamma\not\derm_{\R_\nabla} \Delta$. Then, by cut for sets,
 there is a partition $\tuple{T,F}$ of $L_{\Sigma^\nabla}$ such that 
 $\Gamma \subseteq T$ and $\Delta\subseteq F$ 
 and $T\not\derm_{\R_\nabla}F$. 
 Note that (by $\mathsf{r}_1$ and $\mathsf{r}_4$) for each $\varphi$, we have either $\nabla \varphi\in T$ or $\nnot \nabla \varphi\in T$, but never both. In particular, $F$ is never empty.
Also, by $\mathsf{r}_5$ we must have either  $\nabla \varphi\in T$ or $\nabla\nnot  \varphi\in T$.
Hence, each $\varphi$ 
must be exactly in one of three cases:
(i) $\nabla \varphi,\nabla \nnot \varphi\in T$, (ii)  $\nnot\nabla \nnot \varphi\in T$, or (iii) $\nnot\nabla \varphi\in T$.

 Since $\R\subseteq \R\cup \R_\nabla$, we also have $T\not\derm_{\R}\ F$. 
  From the fact that $\derm_\R\simeq \derm_{\widehat{\Mt}}$ and $F\neq \emptyset$ we know that $T\not\derm_{\widehat{\Mt}}F$.   
 We can therefore pick $v\in \mathsf{Hom}_{\Sigma}(L_{\Sigma^\nabla},\widehat{\Mt})$, 
 for some $\Mt\in \CMt$ such that $v(T)\subseteq D$ and $v(F)\cap D=\emptyset$. 
 Consider $v':L_{\Sigma^\nabla}\to \Mt^\nabla$ defined by: 
$$v'(\varphi):=
\begin{cases}
\ttop&\mbox{ if }\nnot\nabla \nnot \varphi\in T\\
 \bbot&\mbox{ if }\nnot\nabla \varphi\in T\\
 v(\varphi_i)&\mbox{ if } \varphi=\varphi_1\land \varphi_2 \mbox{ and }\nnot\nabla \nnot \varphi_{3-i}\in T \\
 v\varphi_i)&\mbox{ if } \varphi=\varphi_1\lor \varphi_2 \mbox{ and } \nnot\nabla \varphi_{3-i}\in T\\
 v(\varphi )&\mbox{ if }\nabla \varphi,\nabla \nnot \varphi\in T\\
\end{cases}
$$

%
%
%



We will show that $v'\in \Val(\Mt^\nabla)=\mathsf{Hom}_{\Sigma^\nabla}(L_{\Sigma^\nabla},\widehat{\Mt})$.

\begin{enumerate}
\item $v'(\nabla \varphi)= \nabla v'(\varphi)$

From $\mathsf{r}_1$ we have that either (ii) $\nabla \varphi\in T$  or (iii) $\nnot \nabla \varphi\in T$. 

If (ii) $\nabla \varphi\in T$, by $\mathsf{r}_4$ we have that $\nnot\nabla \varphi\notin T$ and so $v'(\varphi)\neq \bbot$. 
Further, by $\mathsf{r}_2$ we obtain that $\nnot\nabla \nnot \nabla \varphi\in T$, hence $v'(\nabla \varphi)=\ttop= {\nabla}(v'(\varphi))$. 

If instead iii) $\nnot \nabla \varphi\in T$ then $v'(\varphi)=\bbot$ and by $\mathsf{r}_4$ we have that $\nabla \varphi\notin T$. Hence, by 
$\mathsf{r}_3$, $\nabla\nabla \varphi\notin T$, and by $\mathsf{r}_1$ we get that $\nnot \nabla\nabla \varphi\in T$ and 
$v'(\nabla \varphi)=\bbot= {\nabla}(v'(\varphi))$. 

\item $v'(\nnot \varphi)= \nnot v'(\varphi)$

If (i) $\nabla \nnot \varphi,\nabla \nnot \nnot \varphi\in T$ then by $\mathsf{r}_6$, $\nabla \varphi\in T$ (so $v'(\varphi)=v(\varphi)$) and therefore $v'(\nnot \varphi)=v(\nnot \varphi)=\nnot_\Mt(v(\varphi))=\tilde{\nnot}(v'(\varphi))$.

If (ii) $\nnot\nabla \nnot \nnot \varphi\in T$ (so $v'(\nnot\varphi)=\ttop$) then by $\mathsf{r}_1$ and $\mathsf{r}_7$ we have that $\nnot \nabla \varphi\in T$ (so $v'(\varphi)=\bbot$) hence  $v'(\nnot \varphi)=\ttop=\tilde{\nnot}(v'(\varphi))$.

If (iii) $\nnot\nabla \nnot \varphi\in T$ then $v'(\nnot \varphi)=\bbot$ and $v'(\varphi)=\ttop$, thus we immediately obtain $v'(\nnot \varphi)= \nnot v'(\varphi)$.

\item $v'(\varphi\land \psi)=v'(\varphi)  \land  v'(\varphi)$

If (i) we have that $\nabla (\varphi\land \psi),\nabla \nnot (\varphi\land \psi)\in T$. From $\nabla (\varphi\land \psi)\in T$, by 
$\mathsf{r}_8$ 
and $\mathsf{r}_{9}$
we obtain that $\nabla \varphi,\nabla \psi \in T$ 
($v'(\varphi)\neq\bbot\neq v'(\psi)$). Also, from $\nabla \nnot (\varphi\land \psi)\in T$, by 

$\mathsf{r}_{10}$,
either $\nabla\nnot \varphi$ or $\nabla \nnot \psi$ are in $ T$ ($v'(\varphi)\neq\ttop$ or $v'(\psi)\neq\ttop$). 
Hence, if $\nabla\nnot \varphi,\nabla \nnot \psi\in T$ then $v'(\varphi\land \psi)=v(\varphi\land \psi)=v(\varphi)\tilde{\land} v(\psi)=v'(\varphi)  \tilde{\land}  v'(\varphi)$.
Otherwise, if $\nabla\nnot \varphi\notin T$, then by $\mathsf{r}_{1}$ we conclude that  $\nnot\nabla\nnot \varphi\in T$ and hence
$v'(\varphi\land \psi)=v'(\psi)=\ttop\tilde{\land} v'(\psi)=v'(\varphi)\tilde{\land} v'(\psi)$. The case $\nabla\nnot \psi\notin T$ is similar.

If (ii)  we have that  $\nnot\nabla \nnot (\varphi\land \psi)\in T$  (so  $v'(\varphi\land \psi)=\ttop$) then by $\mathsf{r}_{4}$ we have that $\nabla \nnot (\varphi\land \psi)\notin T$.
By 
$\mathsf{r}_{11}$  and $\mathsf{r}_{12}$
we have that $\nabla \nnot \varphi, \nabla \nnot \psi\notin T$.
Hence, by $\mathsf{r}_{1}$, $\nnot \nabla \nnot \varphi,\nnot \nabla \nnot \psi\in T$ (so $v'(\varphi)=v'(\psi)=\ttop$) and  $v'(\varphi\land \psi)=\ttop=v'(\varphi)\tilde{\land} v'(\psi)$.

If (iii)  we have that  $\nnot\nabla (\varphi\land \psi)\in T$ (so $v'(\varphi\land \psi)=\bbot$)  then by  $\mathsf{r}_4$,  $\nabla (\varphi\land \psi)\notin T$.
By $\mathsf{r}_{13}$ and $\mathsf{r}_{1}$  we have that either $\nnot \nabla \varphi\in T$ or $\nnot \nabla \psi\in T$ (so either $v'(\varphi)=\bbot$ or $v'(\psi)=\bbot$) hence  $v'(\varphi\land \psi)=\bbot=v'(\varphi)\tilde{\land} v'(\psi)$.


\item $v'(\varphi\lor \psi)=v'(\varphi)  \lor  v'(\psi)$

If (i)  we have that $\nabla (\varphi\lor \psi),\nabla \nnot (\varphi\lor \psi)\in T$.
From $\nabla \nnot (\varphi\lor \psi)\in T$, by 
$\mathsf{r}_{14}$ and $\mathsf{r}_{15}$
we obtain that
 $\nabla \nnot \varphi,\nabla \nnot \psi \in T$ 
($v'(\varphi)\neq\ttop\neq v'(\psi)$). 
From $\nabla (\varphi\lor \psi)\in T$, by 
$\mathsf{r}_{16}$,
either $\nabla\varphi$ or $\nabla \psi$ are in $ T$ ($v'(\varphi)\neq\bbot$ or $v'(\psi)\neq\bbot$). 
Hence, if $\nabla\varphi,\nabla\psi\in T$ then $v'(\varphi\lor \psi)=v(\varphi\lor \psi)=v(\varphi)\tilde{\lor} v(\psi)=v'(\varphi)  \tilde{\lor}  v'(\varphi)$.
Otherwise, if $\nabla \varphi\notin T$, then by $\mathsf{r}_{1}$ we conclude that  $\nnot\nabla \varphi\in T$ and hence
$v'(\varphi\lor \psi)=v'(\psi)=\bbot\,\tilde{\lor}\, v'(\psi)=v'(\varphi)\,\tilde{\land}\, v'(\psi)$. The case $\nabla \psi\notin T$ is similar.

If (ii)  we have that  $\nnot\nabla \nnot (\varphi\lor \psi)\in T$ (so  $v'(\varphi\lor \psi)=\ttop$) then $ \nnot (\varphi\lor \psi)\notin T$ by $\mathsf{r}_{4}$.
From $\mathsf{r}_{19}$ and $\mathsf{r}_{1}$ either $\nnot \nabla \nnot \varphi\in T$ or $\nnot \nabla \nnot \psi\in T$.
(so either $v'(\varphi)=\ttop$ or $v'(\psi)=\ttop$) hence  $v'(\varphi\lor \psi)=\ttop=v'(\varphi)\tilde{\lor} v'(\psi)$.

{
If (iii)  we have that  $\nnot\nabla (\varphi\lor \psi)\in T$ (so $v'(\varphi\lor \psi)=\bbot$) and so $\nabla (\varphi\lor \psi)\notin T$  by $\mathsf{r}_4$.
By 
$\mathsf{r}_{17}$, $\mathsf{r}_{18}$ we have that $ \nabla \varphi, \nabla  \psi\notin T$,
and by $\mathsf{r}_1$,
$\nnot \nabla \varphi, \nnot\nabla  \psi\in T$ (so $v'(\varphi)=v'(\psi)=\bbot$). Thus,
 $v'(\varphi\lor \psi)=v'(\varphi)  \lor  v'(\psi)=\bbot\tilde{\lor} \bbot=\bbot$.}

\item $v'(\bot)=\bbot$ and  $v'(\top)=\ttop$

Directly from rules 
$\mathsf{r}_{20}$ and $\mathsf{r}_{21}$.
\qedhere
\end{enumerate} 

Regarding the preceding Proposition, note that, in order to show that $v'(\xi)$ is well defined for every $\xi\in Fm_{\Sigma^{\nabla}}$  (and that $v'\in \Val(\Mt^\nabla)$), we only need to consider rules in $\R_\nabla$ instantiated with formulas in $\sub(\xi)$. 
This observation will be crucial in 
the proof of Theorem~\ref{analytic2de3}.


\end{proof}


We now proceed to explain how a single-conclusion axiomatization (for a logic extending  $\ISL$)
can be extracted from the multiple-conclusion rules of Theorem~\ref{nablaax}.
We shall need a few technical lemmas. In the next one, 
$ \prod_i{\Mt_i}$  denotes the product of a family of matrices $\{ \Mt_i : i \in I \}$.


\begin{lemma}\label{lem:emb}
  Given a class $\{ \Mt_i : i \in I \}$ of $\Sigma$-matrices, 
  we have the following embeddings:
 $$
 \prod_i{\Mt_i}\hookrightarrow (\prod_i\Mt_i)^\nabla \hookrightarrow \prod_i({\Mt_i}^\nabla).
 $$
\end{lemma}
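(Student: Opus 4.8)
The two maps to construct and verify are the obvious ones suggested by the definition of $(-)^\nabla$. For the first embedding $\prod_i \Mt_i \hookrightarrow (\prod_i \Mt_i)^\nabla$, recall that for any De Morgan matrix $\Mt = \la \A, D\ra$ the algebra $\A^\nabla$ is built by freely adjoining a new bottom $\bbot$ and a new top $\ttop$ to $\A$; hence the identity map on the underlying set of $\prod_i \A_i$ is the obvious candidate. I would simply observe that this inclusion is a $\Sigma$-homomorphism, since on elements coming from $\prod_i \A_i$ the operations $\land, \lor, \nnot$ of $(\prod_i \Mt_i)^\nabla$ are defined to agree with those of $\prod_i \A_i$ by the case split in the definition in Section~\ref{sec:isal} (the new constants $\bot, \top$ are $\bbot, \ttop$, which lie outside the image, so they impose no constraint on a $\Sigma$-homomorphism — actually one must be slightly careful here if $\bot,\top$ are in $\Sigma$; but $\prod_i \A_i$ has its own $\bot, \top$ which map to the old bottom/top of the product, and the embedding need only preserve the $\Sigma$-structure of $\prod_i\Mt_i$, which it does). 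For it to be a matrix embedding we also need that the designated set of $\prod_i \Mt_i$, namely $\prod_i D_i$, is exactly the preimage of the designated set $\prod_i D_i \cup \{\ttop\}$ of $(\prod_i \Mt_i)^\nabla$; this is immediate since $\ttop$ is not in the image.

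For the second embedding $(\prod_i \Mt_i)^\nabla \hookrightarrow \prod_i(\Mt_i^\nabla)$, the natural candidate is the map $h$ that fixes each element of $\prod_i \A_i$ (viewing it, coordinatewise, inside $\prod_i A_i^\nabla$), sends the new bottom $\bbot$ of the left-hand algebra to the family $(\bbot_i)_{i\in I}$ of new bottoms, and sends the new top $\ttop$ to $(\ttop_i)_{i \in I}$. Injectivity is clear. The content is checking that $h$ is a $\Sigma^\nabla$-homomorphism, i.e.\ that it commutes with $\land, \lor, \nnot, \nabla$. The verification is a case analysis following the piecewise definition of the operations in $(-)^\nabla$: e.g.\ for $\nabla$, on a genuine element $a \in \prod_i \A_i$ one has $\nabla a = \ttop$ on the left, which maps to $(\ttop_i)_i$, while on the right $\nabla$ applied coordinatewise to $a=(a_i)_i$ gives $(\nabla a_i)_i = (\ttop_i)_i$ since each $a_i \neq \bbot_i$; and $\nabla \bbot = \bbot \mapsto (\bbot_i)_i = (\nabla \bbot_i)_i$. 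The cases for $\nnot$ are equally direct, and for $\land, \lor$ one checks the four branches of the definition (both arguments genuine, one argument $\ttop$/$\bbot$, etc.) against the coordinatewise operation in $\prod_i(\Mt_i^\nabla)$ — these match because, for instance, $\ttop \land z = z$ on the left when $z\in\prod_i\A_i$, whereas coordinatewise $(\ttop_i)_i \land (z_i)_i = (\ttop_i \land z_i)_i = (z_i)_i$. Finally, to confirm $h$ is a matrix embedding, observe that the designated set on the left is $\prod_i D_i \cup \{\ttop\}$ and its image under $h$ is $\prod_i D_i \cup \{(\ttop_i)_i\}$, which is exactly $h[(\prod_i \Mt_i)^\nabla] \cap \prod_i(D_i \cup \{\ttop_i\})$, so designated elements are reflected.

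\textbf{Main obstacle.} There is no deep obstacle here — the lemma is essentially bookkeeping. The only point requiring care is the case analysis for $\land$ and $\lor$ when exactly one argument is a ``new'' element: one must make sure that a family $(z_i)_i$ with some coordinates equal to $\ttop_i$ or $\bbot_i$ is still handled correctly, which it is, precisely because the product operation is coordinatewise and the defining clauses of $(-)^\nabla$ only distinguish the \emph{global} new top/bottom, not mixed families. I would state this cleanly by noting that both composite maps are well-defined functions, then verify the homomorphism property operation-by-operation using the explicit clauses, and close by checking the designated-set condition in each case; since everything is routine I would present it compactly rather than spelling out all branches.
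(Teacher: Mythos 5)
Your maps are exactly the paper's: its proof simply states that the first embedding is the identity function and the second is the identity on the elements of $\prod_i\Mt_i$ with $\ttop\mapsto(\ttop)_{i\in I}$ and $\bbot\mapsto(\bbot)_{i\in I}$, leaving the operation-by-operation and designated-set checks implicit, just as you outline them. The one wrinkle you flag---that the identity map does not send the old interpretations of the constants $\top,\bot$ in $\prod_i\Mt_i$ to the new $\ttop,\bbot$ interpreting them in $(\prod_i\Mt_i)^\nabla$, so the first arrow is an embedding only for the constant-free part of $\Sigma$---is passed over in silence by the paper as well, so your account is at least as careful as the original.
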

\begin{proof}
 The fist embedding is simply the identity function. 
The second one is also the identity for the elements in $\prod_i({\Mt_i})$, whereas  $\ttop$ is sent to $\prod_i\{\ttop\}$ and
$\bbot$ to $\prod_i\{\bbot\}$. 
\end{proof}

The following result is an immediate consequence of  Lemma~\ref{lem:emb}.
\begin{lemma}\label{lem:emblogs}
 For every class $\CMt$ of $\Sigma$-matrices, we have:
\begin{enumerate}[(i)]
  \item ${\derm_{\PR(\widehat{\CMt})}} \subseteq {\derm_{\widehat{\PR(\CMt)}}} \subseteq {\derm_{\PR(\CMt)}}$.
 \item ${\derm_{\PR({\CMt}^\nabla)}} \subseteq {\derm_{(\PR(\CMt))^\nabla}} \subseteq {\derm_{\PR(\CMt)}}$.
\end{enumerate}
\end{lemma}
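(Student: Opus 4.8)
The plan is to deduce Lemma~\ref{lem:emblogs} directly from the chain of embeddings in Lemma~\ref{lem:emb}, using the standard fact that if one matrix embeds into another, then the multiple-conclusion logic of the larger matrix is contained in that of the smaller one (equivalently, a submatrix validates at least all the consequences valid in the ambient matrix). So the first thing I would record, as a preliminary observation, is: whenever $\Mt \hookrightarrow \NN$, we have ${\derm_{\NN}} \subseteq {\derm_{\Mt}}$; and more generally, if a class $\CMt_1$ embeds into a class $\CMt_2$ componentwise (every member of $\CMt_1$ embeds into some member of $\CMt_2$), then ${\derm_{\CMt_2}} \subseteq {\derm_{\CMt_1}}$. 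This is immediate from the definition of $\derm_{\CMt}$ via valuations: a failed consequence over a submatrix lifts, via the embedding, to a failed consequence over the ambient matrix.

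Next I would apply this to the product matrices. For item (i), note that by Lemma~\ref{lem:emb} (applied to the family $\{\Mt_i\}$, or rather its image under the hat construction; see below) we have, for each choice of family, the embeddings relating $\prod_i \widehat{\Mt_i}$, $\widehat{\prod_i \Mt_i}$ and $\prod_i \Mt_i$. The subtle point is to get the right chain: $\widehat{\prod_i \Mt_i}$ is the $\Sigma$-fragment of $(\prod_i\Mt_i)^\nabla$, and $\prod_i \widehat{\Mt_i}$ is the $\Sigma$-fragment of $\prod_i(\Mt_i^\nabla)$, so the two $\Sigma$-matrix embeddings $\prod_i \Mt_i \hookrightarrow (\prod_i\Mt_i)^\nabla$ and $(\prod_i\Mt_i)^\nabla \hookrightarrow \prod_i(\Mt_i^\nabla)$ of Lemma~\ref{lem:emb} restrict to $\Sigma$-matrix embeddings $\prod_i \Mt_i \hookrightarrow \widehat{\prod_i \Mt_i} \hookrightarrow \prod_i \widehat{\Mt_i}$ — wait, one must be careful about which direction the designated sets go, since $\widehat{\Mt}$ adds $\ttop$ to the designated set; but the relevant maps from Lemma~\ref{lem:emb} are matrix embeddings (preserving and reflecting designation), so this is fine. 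Reading off the induced inclusions of multiple-conclusion logics, larger matrix gives smaller logic, yields ${\derm_{\PR(\widehat{\CMt})}} \subseteq {\derm_{\widehat{\PR(\CMt)}}} \subseteq {\derm_{\PR(\CMt)}}$ once we quantify over all families from $\CMt$. For item (ii), apply the hat/$\nabla$ relationship once more: $\Mt^\nabla$ is $\widehat{\Mt}$ together with the $\nabla$ operation, so the same embeddings of Lemma~\ref{lem:emb}, now read at the level of full $\Sigma^\nabla$-matrices, give $\prod_i (\Mt_i^\nabla) \hookleftarrow (\prod_i \Mt_i)^\nabla \hookleftarrow \ldots$; more directly, $(\PR(\CMt))^\nabla$ consists of matrices $(\prod_i \Mt_i)^\nabla$ which embed into $\prod_i(\Mt_i^\nabla) \in \PR(\CMt^\nabla)$, giving ${\derm_{\PR(\CMt^\nabla)}} \subseteq {\derm_{(\PR(\CMt))^\nabla}}$; and $\prod_i \Mt_i \hookrightarrow (\prod_i \Mt_i)^\nabla$ is a $\Sigma$-reduct relationship, whose $\Sigma^\nabla$-analogue is the observation that the $\nabla$-free fragment of $(\prod_i\Mt_i)^\nabla$ is $\widehat{\prod_i\Mt_i}$, hence ${\derm_{(\PR(\CMt))^\nabla}} \subseteq {\derm_{\PR(\CMt)}}$ follows by forgetting $\nabla$ (restricting a multiple-conclusion logic to the smaller signature only shrinks it, since we have fewer formulas and the same valuations).

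The main obstacle, such as it is, is purely bookkeeping: keeping straight the three operations $\widehat{(\cdot)}$, $(\cdot)^\nabla$, and $\prod$, and the fact that $\widehat{(\cdot)}$ and $(\cdot)^\nabla$ do not commute with $\prod$ — indeed the content of Lemma~\ref{lem:emb} is precisely the one-directional comparison. I would therefore present the argument by first fixing an arbitrary family $\{\Mt_i : i \in I\} \subseteq \CMt$, invoking Lemma~\ref{lem:emb} for that family, restricting/extending signatures as needed, and reading off the two-step inclusion for that family; then taking the intersection over all families (recall $\derm_{\PR(\DD)} = \bigcap\{\derm_{\prod_i \Mt_i} : \{\Mt_i\} \subseteq \DD\}$ up to the usual conventions) preserves the inclusions. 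I do not expect any genuine difficulty beyond this; the whole lemma is "an immediate consequence" of Lemma~\ref{lem:emb} exactly as the excerpt advertises, and the proof should be about three or four sentences once the signature-restriction conventions are spelled out.

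\begin{proof}
Recall that if $\Mt \hookrightarrow \NN$ is a matrix embedding, then every valuation into $\Mt$ that refutes a consequence $\Gamma \derm \Delta$ composes with the embedding to a valuation into $\NN$ refuting the same consequence; hence ${\derm_{\NN}} \subseteq {\derm_{\Mt}}$. The same holds for the $\nabla$-free ($\Sigma$-)fragments, and restricting a multiple-conclusion logic from $\Sigma^\nabla$ to $\Sigma$ can only make it smaller. Fix now a family $\{\Mt_i : i \in I\} \subseteq \CMt$.

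For (i): the $\Sigma$-matrix embeddings $\prod_i \Mt_i \hookrightarrow (\prod_i \Mt_i)^\nabla \hookrightarrow \prod_i(\Mt_i^\nabla)$ of Lemma~\ref{lem:emb}, read on the $\Sigma$-fragments, become $\prod_i \Mt_i \hookrightarrow \widehat{\prod_i \Mt_i} \hookrightarrow \prod_i \widehat{\Mt_i}$, since $\widehat{\prod_i \Mt_i}$ is by definition the $\Sigma$-fragment of $(\prod_i \Mt_i)^\nabla$ and $\prod_i \widehat{\Mt_i}$ is the $\Sigma$-fragment of $\prod_i(\Mt_i^\nabla)$. Reading off logics (larger matrix, smaller logic) gives ${\derm_{\prod_i \widehat{\Mt_i}}} \subseteq {\derm_{\widehat{\prod_i \Mt_i}}} \subseteq {\derm_{\prod_i \Mt_i}}$. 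Intersecting over all families $\{\Mt_i\} \subseteq \CMt$ yields ${\derm_{\PR(\widehat{\CMt})}} \subseteq {\derm_{\widehat{\PR(\CMt)}}} \subseteq {\derm_{\PR(\CMt)}}$.

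For (ii): the second embedding of Lemma~\ref{lem:emb} is a $\Sigma^\nabla$-matrix embedding $(\prod_i \Mt_i)^\nabla \hookrightarrow \prod_i(\Mt_i^\nabla)$, the target lying in $\PR(\CMt^\nabla)$; hence ${\derm_{\PR(\CMt^\nabla)}} \subseteq {\derm_{(\PR(\CMt))^\nabla}}$. For the second inclusion, the $\nabla$-free fragment of $(\prod_i \Mt_i)^\nabla$ is $\widehat{\prod_i \Mt_i}$, into which $\prod_i \Mt_i$ embeds as a $\Sigma$-matrix; restricting to $\Sigma$ and using this embedding gives ${\derm_{(\PR(\CMt))^\nabla}} \subseteq {\derm_{\widehat{\PR(\CMt)}}} \subseteq {\derm_{\PR(\CMt)}}$, the last step by (i). Intersecting over families completes the proof.
\end{proof}
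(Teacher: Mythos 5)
Your proof is correct and follows exactly the route the paper intends: the paper states the lemma as an immediate consequence of Lemma~\ref{lem:emb}, and you simply spell out the routine details (strict matrix embeddings reverse the inclusion of multiple-conclusion consequences, intersect over families, and read the mixed-signature inclusion in (ii) via the $\nabla$-free reduct). Your handling of the signature bookkeeping, which the paper leaves implicit, is accurate.
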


Let $\R$ be a 
a set of single-conclusion rules. Recall that
$\R_\nabla$ is a set of multiple-conclusion rules, and
we 
abbreviate
 ${\ders_{\R\cup \R_\nabla}}={\ders_{\derm_{\R\cup \R_\nabla}}}$.

\begin{corollary}\label{cor:stomax}
 Let $\CMt$ be a class of $\Sigma$-matrices, and let 
$\R$ be a set of single-conclusion rules. 
If ${\ders_\R} = \Log \CMt =\Log {\widehat{\CMt}} $, 
 then ${\ders_{\R\cup \R_\nabla}} = \Log {\CMt^\nabla} $.
\end{corollary}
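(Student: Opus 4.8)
The goal is to deduce the single-conclusion statement ${\ders_{\R\cup\R_\nabla}} = \Log{\CMt^\nabla}$ from the multiple-conclusion Theorem~\ref{nablaax} together with the product-closure lemmas just established. First I would invoke Theorem~\ref{nablaax}: from ${\ders_\R} = \Log{\widehat{\CMt}}$ and Remark~\ref{rem:svsm}(ii) (applied to the class $\widehat{\CMt}$, using that $\R\subseteq\wp(L)\times L$), we get ${\derm_\R}\simeq{\derm_{\PR(\widehat{\CMt})}}$, hence ${\derm_{\widehat{\CMt}}}\simeq{\derm_\R}$ since $\derm_{\widehat{\CMt}}$ already equals $\derm_{\PR(\widehat{\CMt})}$ up to sign by Remark~\ref{rem:svsm}(i)-style reasoning (products do not change the induced multiple-conclusion logic modulo sign). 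This puts us in the hypothesis of Theorem~\ref{nablaax}, which yields ${\derm_{\CMt^\nabla}} = {\derm_{\R\cup\R_\nabla}}$.

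Next I would pass to single-conclusion companions: taking the intersection with $\wp(L_{\Sigma^\nabla})\times L_{\Sigma^\nabla}$ on both sides of ${\derm_{\CMt^\nabla}} = {\derm_{\R\cup\R_\nabla}}$ gives ${\ders_{\derm_{\CMt^\nabla}}} = {\ders_{\R\cup\R_\nabla}}$. So it remains to identify ${\ders_{\derm_{\CMt^\nabla}}}$ with $\Log{\CMt^\nabla}$. By Remark~\ref{rem:svsm}(i) we always have ${\ders_{\derm_{\CMt^\nabla}}} = \Log{\CMt^\nabla} = \Log{\PR(\CMt^\nabla)}$; indeed the single-conclusion companion of the multiple-conclusion logic of a class of matrices is exactly the (single-conclusion) matrix logic of that class. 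This immediately closes the argument: ${\ders_{\R\cup\R_\nabla}} = {\ders_{\derm_{\CMt^\nabla}}} = \Log{\CMt^\nabla}$.

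The one point that needs a little care — and where Lemma~\ref{lem:emblogs} enters — is whether the hypothesis ${\ders_\R} = \Log{\widehat{\CMt}}$ really delivers ${\derm_{\widehat{\CMt}}}\simeq{\derm_\R}$ in the precise form Theorem~\ref{nablaax} wants, since $\R$ axiomatizes a multiple-conclusion logic via $\derm_\R$ but is given to us as a set of \emph{single}-conclusion rules generating ${\ders_\R}$. Here I would use Remark~\ref{rem:svsm}(ii): from $\Log{\widehat{\CMt}} = {\ders_\R}$ with $\R\subseteq\wp(L)\times L$ we obtain directly ${\derm_\R}\simeq{\derm_{\PR(\widehat{\CMt})}}$; and by Lemma~\ref{lem:emblogs}(i) combined with the standard fact that adding products does not change the induced multiple-conclusion consequence (only possibly its sign), ${\derm_{\PR(\widehat{\CMt})}}\simeq{\derm_{\widehat{\CMt}}}$. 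Chaining the two $\simeq$'s gives ${\derm_\R}\simeq{\derm_{\widehat{\CMt}}}$, exactly the premise of Theorem~\ref{nablaax}.

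I expect the only genuine obstacle to be bookkeeping around the equivalence relation $\simeq$ (the "sign" ambiguity): one must make sure that the multiple-conclusion identity ${\derm_{\CMt^\nabla}} = {\derm_{\R\cup\R_\nabla}}$ produced by Theorem~\ref{nablaax} is an honest equality (which it is, as stated there), and that all the intermediate steps that only give $\simeq$ are applied to multiple-conclusion logics whose single-conclusion companions coincide regardless of sign — which is guaranteed by the first assertion of Remark~\ref{rem:svsm} ("Let ${\derm_1}\simeq{\derm_2}$; then $\ders_{\derm_1} = \ders_{\derm_2}$"). With that observation the proof is essentially a two-line chase through Theorem~\ref{nablaax} and Remark~\ref{rem:svsm}.
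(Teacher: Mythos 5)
There is a genuine gap, and it sits exactly where you flagged "the one point that needs a little care". Your bridge from the hypothesis to the premise of Theorem~\ref{nablaax} rests on the claim that ${\derm_{\PR(\widehat{\CMt})}}\simeq{\derm_{\widehat{\CMt}}}$, i.e.\ that closing under products changes the induced \emph{multiple}-conclusion logic at most in sign. That is false in general: Remark~\ref{rem:svsm}(i) only says the \emph{single}-conclusion companions agree, $\Log\CMt=\Log\PR(\CMt)$, and products typically destroy genuinely multiple-conclusion validities. For instance, take $\CMt=\{\tuple{\alg{B}_2,\{1\}}\}$, so that $\widehat{\CMt}$ consists of the four-element chain $\bbot<0<1<\ttop$ with designated set $\{1,\ttop\}$: there $\emptyset\derm_{\widehat{\CMt}}\{p,\nnot p\}$ holds, but it fails in the square of that matrix (value $(0,1)$ for $p$), so it fails for $\derm_{\PR(\widehat{\CMt})}$, and it also fails for $\derm_{\R}$ whenever $\R$ is a set of single-conclusion rules (single-conclusion proofs do not branch, so $\emptyset\derm_\R\Delta$ forces $\emptyset\ders_\R\delta$ for some $\delta\in\Delta$). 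Hence from ${\ders_\R}=\Log\widehat{\CMt}$ you cannot conclude ${\derm_\R}\simeq{\derm_{\widehat{\CMt}}}$, and Theorem~\ref{nablaax} cannot be applied to the class $\CMt$ itself; the identity ${\derm_{\CMt^\nabla}}={\derm_{\R\cup\R_\nabla}}$ you extract from it is unjustified (and in general false).

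The repair is what the paper actually does: work with the product-closed class. From ${\ders_\R}=\Log\CMt=\Log\widehat{\CMt}$, Remark~\ref{rem:svsm}(ii) gives ${\derm_\R}\simeq{\derm_{\PR(\widehat{\CMt})}}\simeq{\derm_{\PR(\CMt)}}$; then the sandwich ${\derm_{\PR(\widehat{\CMt})}}\subseteq{\derm_{\widehat{\PR(\CMt)}}}\subseteq{\derm_{\PR(\CMt)}}$ of Lemma~\ref{lem:emblogs}(i), together with the clause of Remark~\ref{rem:svsm} about logics squeezed between two $\simeq$-equivalent ones, yields ${\derm_\R}\simeq{\derm_{\widehat{\PR(\CMt)}}}$. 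Now Theorem~\ref{nablaax} is applied to the class $\PR(\CMt)$, giving ${\derm_{(\PR(\CMt))^\nabla}}={\derm_{\R\cup\R_\nabla}}$; finally Lemma~\ref{lem:emblogs}(ii) and Remark~\ref{rem:svsm} identify the single-conclusion companion of ${\derm_{(\PR(\CMt))^\nabla}}$ with $\Log{\PR(\CMt)^\nabla}=\Log{\CMt^\nabla}$, which is the desired conclusion. Your second step (passing to single-conclusion companions and using that the companion of $\derm_{\CMt^\nabla}$ is $\Log{\CMt^\nabla}$) is fine; it is only the application of Theorem~\ref{nablaax} at the level of $\CMt$ rather than $\PR(\CMt)$ that breaks, and it is precisely to avoid this that Lemma~\ref{lem:emblogs} is needed in both of its parts.
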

\begin{proof}
From $\Log \CMt =\Log {\widehat{\CMt}} =\ders_\R$ we have $\derm_\R\simeq \derm_{\PR(\widehat{\CMt})}\simeq \derm_{\PR(\CMt)}$ by Remark~\ref{rem:svsm} {(ii)}.
From Lemma~\ref{lem:emblogs} (i) and Remark~\ref{rem:svsm} we obtain that $\derm_\R\simeq\derm_{\widehat{\PR(\CMt)}}$.
By Theorem~\ref{nablaax} we conclude that $\derm_{(\PR(\CMt))^\nabla}=\derm_{\R\cup \R_\nabla}$.
Finally, from Lemma~\ref{lem:emblogs} (ii) and  Remark~\ref{rem:svsm} we have  
$\Log {\CMt^\nabla} = \Log \PR(\CMt)^\nabla $. 
Hence
 $\ders_{\CMt^\nabla}$ is axiomatized by $\R\cup \R_\nabla$. 
\end{proof}

Joining Theorem~\ref{disjax} and Corollary~\ref{cor:stomax}, we obtain the following recipe for capturing the effect of 
adding $\nabla$ to  single-conclusion axiomatizations.

\begin{corollary}
\label{cor:wck}
Let $\CMt$ be a class of $\Sigma$-matrices and let
$\R$ be a set of 
single-conclusion
rules.
If ${\ders_\R} = \Log \CMt =\Log {\widehat{\CMt}} $, then ${\ders_{({R\cup R_\nabla})^\lor}} = \Log {{\CMt}^\nabla} $.
\end{corollary}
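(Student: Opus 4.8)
The plan is to combine the two results that immediately precede this corollary. From the hypothesis $\ders_\R = \Log\CMt = \Log\widehat{\CMt}$, Corollary~\ref{cor:stomax} gives us directly that $\ders_{\R\cup\R_\nabla} = \Log{\CMt^\nabla}$; so the work that remains is purely to pass from the multiple-conclusion-derived single-conclusion calculus $\R\cup\R_\nabla$ to the genuinely single-conclusion calculus $(\R\cup\R_\nabla)^\lor$ via Theorem~\ref{disjax}. The only hypothesis of Theorem~\ref{disjax} that needs checking is the disjunction property: for all $\Gamma\cup\{\varphi,\psi,\xi\}\subseteq Fm$, one has $\Gamma,\varphi\lor\psi \ders_{\derm_{\R\cup\R_\nabla}}\xi$ if and only if both $\Gamma,\varphi\ders_{\derm_{\R\cup\R_\nabla}}\xi$ and $\Gamma,\psi\ders_{\derm_{\R\cup\R_\nabla}}\xi$.

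First I would note that one direction of the disjunction property is trivial: since $\varphi\ders\varphi\lor\psi$ and $\psi\ders\varphi\lor\psi$ hold in $\ISL$ (hence in any extension, these being valid in every matrix $\Mt^\nabla$ because $\vee$ is a join in $\widehat{\A}$ restricted to the relevant elements and $\ttop$ absorbs), monotonicity of $\ders_{\derm_{\R\cup\R_\nabla}}$ yields $\Gamma,\varphi\ders\xi$ and $\Gamma,\psi\ders\xi$ from $\Gamma,\varphi\lor\psi\ders\xi$. For the converse direction I would argue semantically: by Corollary~\ref{cor:stomax} the consequence relation $\ders_{\R\cup\R_\nabla}$ is $\Log{\CMt^\nabla}$, and every matrix $\Mt^\nabla = \la\A^\nabla, D\cup\{\ttop\}\ra$ has the property that its designated set $D\cup\{\ttop\}$ is a lattice filter of $\A^\nabla$ (indeed $D$ is a filter in the De Morgan algebra $\A$ and adding the new top preserves this, while the new bottom is excluded). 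For a matrix whose designated set is a prime-free lattice filter one still gets the disjunction property at the level of the generated logic by a standard argument: if $v$ is a valuation into $\Mt^\nabla$ with $v(\Gamma)\subseteq D\cup\{\ttop\}$, $v(\varphi\lor\psi)\in D\cup\{\ttop\}$ but $v(\xi)\notin D\cup\{\ttop\}$, then $v(\varphi\lor\psi) = v(\varphi)\lor v(\psi)$, and one uses that $\Log{\CMt^\nabla} = \Log{\PR(\CMt)^\nabla}$ together with the fact that products of the $\Mt^\nabla$ can be refined to matrices on which the filter is prime — more cleanly, one invokes that $\B$ (and thus each super-Belnap logic, and thus via Lemma~\ref{lem:consex} each $\Lo^\nabla$, and in particular $\ISL$) has the property of proof by cases with respect to $\lor$, which is exactly this disjunction property; this is inherited from the well-known fact that $\B$ has it (the reduced models of $\B$ being matrices over De Morgan algebras with lattice filters, and $\vee$ distributing appropriately). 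The cleanest route is: the $\nabla$-free fragment of $\Log{\CMt^\nabla}$ has the disjunction property because it equals $\Log\widehat{\CMt} = \ders_\R$ which, being $\subseteq$ a super-Belnap logic's $\nabla$-free companion, has proof by cases; and the disjunction property for the full language follows because $\lor$ does not interact with $\nabla$ in any rule of $\R_\nabla$ that would break case analysis — more precisely, one checks that each rule $\mathsf{r}_1,\dots,\mathsf{r}_{21}$ is "$\lor$-preservable" so that the proof-by-cases construction goes through.

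The main obstacle I anticipate is exactly this verification that the disjunction property survives the addition of the $\nabla$-rules: while $\R_\nabla$ contains no rule mentioning $\lor$ in a way that couples two formulas disjunctively on the premise side, one must be careful that the multiple-conclusion character of $\R_\nabla$ (rules like $\mathsf{r}_1$, $\mathsf{r}_{10}$, $\mathsf{r}_{16}$ with two conclusions) does not spoil the single-conclusion disjunction property of $\ders_{\derm_{\R\cup\R_\nabla}}$. The safe way to handle this is semantically rather than proof-theoretically: use $\ders_{\R\cup\R_\nabla} = \Log{\CMt^\nabla}$ (Corollary~\ref{cor:stomax}) and $\Log{\CMt^\nabla} = \Log{\PR(\CMt)^\nabla}$ (from Lemma~\ref{lem:emblogs}(ii) and Remark~\ref{rem:svsm}), then observe that every finite set of premises that fails to entail $\xi$ does so on a single matrix $\Mt^\nabla$ via a valuation $v$; then $v(\varphi\lor\psi)\in D\cup\{\ttop\}$ forces, in the distributive lattice $\A^\nabla$, that the filter generated by $v(\Gamma)\cup\{v(\varphi\lor\psi)\}$ avoids $v(\xi)$, and by the prime filter theorem one extends to a prime filter, obtaining a matrix homomorphism onto a two-element-designated quotient where $v(\varphi)$ or $v(\psi)$ is designated — but since we need the counterexample inside $\CMt^\nabla$, the correct move is to note that $\Log\CMt$ already has the disjunction property because $\ders_\R = \Log\widehat{\CMt}$ and every reduced model of a super-Belnap logic has a lattice filter as designated set, hence proof by cases holds for $\ders_\R$; and then the disjunction property lifts to $\ders_{\R\cup\R_\nabla}$ because the $\nabla$-reduct of any valuation behaves as a valuation of $\ders_\R$ on $\nabla$-free formulas and the value of $\varphi\lor\psi$ is computed in $\widehat{\A}$. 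Once the disjunction property is in hand, Theorem~\ref{disjax} applies verbatim to $\R' := \R\cup\R_\nabla$, yielding $\ders_{\derm_{\R'}} = \ders_{(R\cup R_\nabla)^\lor}$, and combining with $\ders_{\R\cup\R_\nabla} = \Log{\CMt^\nabla}$ finishes the proof.
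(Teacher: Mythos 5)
Your opening paragraph is exactly the paper's own proof: Corollary~\ref{cor:wck} is obtained there by simply joining Corollary~\ref{cor:stomax} (which yields ${\ders_{\R\cup\R_\nabla}}=\Log{\CMt^\nabla}$) with Theorem~\ref{disjax}; no further argument is given. The trouble lies in the rest of your text, where you try to discharge the proof-by-cases hypothesis of Theorem~\ref{disjax}. The key step you rely on, namely that ``every reduced model of a super-Belnap logic has a lattice filter as designated set, hence proof by cases holds for $\ders_\R$'', is a non sequitur: semantically, proof by cases needs the designated filters to be \emph{prime}, and lattice filters need not be prime. Worse, the property genuinely does not follow from the hypotheses of the corollary. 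Take $\CMt=\{\la \alg{DM}_4,\{1\}\ra\}$, so that $\Log \CMt=\mathcal{ETL}$, a reduced matrix model of $\B$; by Corollary~\ref{cor:newtopmat} the hypotheses ${\ders_\R}=\Log\CMt=\Log{\widehat{\CMt}}$ hold with $\R=\R_\B\cup\{\eqref{Eqn:DS}\}$. Yet $p\land\nnot p\vdash r$ and $q\land\nnot q\vdash r$ hold in $\mathcal{ETL}$ (vacuously, since no valuation designates $p\land\nnot p$), while $(p\land\nnot p)\lor(q\land\nnot q)\not\vdash r$ (take $p\mapsto a$, $q\mapsto b$); so proof by cases fails for $\ders_\R$, hence also for its conservative expansion $\Log{\CMt^\nabla}=\Log\la\alg{IS}_6,\uparrow\!1\ra$. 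Correspondingly, the $\lor$-form of \eqref{Eqn:DS} produced by Theorem~\ref{disjax} is not even sound over $\la\alg{IS}_6,\uparrow\!1\ra$ (take $p\mapsto a$, $q\mapsto 0$, $p_0\mapsto b$). This also shows that your anticipated obstacle was misplaced: the danger does not come from the multiple-conclusion rules in $\R_\nabla$, but already from the rules in $\R$ itself, and the claim that the disjunction property holds for every super-Belnap logic (or is inherited from $\B$ by extensions) is false.

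So the corollary must be read as carrying the proof-by-cases premise of Theorem~\ref{disjax} implicitly -- the paper signals this by saying that Theorem~\ref{disjax} ``covers the case of some of the logics that interest us here'' -- and in the intended applications that premise is checked directly: e.g.\ in Example~\ref{ex:logis} the designated sets $\uparrow\!a$ in $\alg{DM}_4$ and $\uparrow\!a$ in $\alg{IS}_6$ are prime filters, which gives proof by cases semantically. Your instinct that the hypothesis of Theorem~\ref{disjax} needs attention is sound, but the verification you offer (filterhood implies proof by cases, or a prime-filter extension argument that, as you yourself notice, leaves the class $\CMt^\nabla$) cannot be repaired at the stated level of generality; the correct fix is to add the proof-by-cases (or primality) assumption, or to restrict to classes $\CMt$ for which it is verified case by case.
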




\begin{example}
\label{ex:logis}
Let $\Mt_4 = \tuple{\alg{DM}_4,\uparrow\! a}$
be the four-element matrix that defines the Belnap-Dunn logic $\B$.
By Corollary~\ref{lem:newtopmat}, we know that 
$\B 
= \Log \Mt_4
= \Log \widehat{\Mt}_4
$.
%
%
Hence, from Corollaries~\ref{cor:newtopmat} and~\ref{cor:wck} we can obtain a Hilbert axiomatization for $\ISL=\Log \Mt_4^\nabla=\Log \tuple{\alg{IS}_6,\uparrow\! a}$.
Let $\R_\B$ be 
the Hilbert-style calculus used in~\cite{F97} to axiomatize  $\Log \B$
(expanded with the rules introduced in~\cite[p.~1065]{AlPrRi16}
to account for the constants).
\begin{center}
 
  \begin{tabular}{ccc}
  \\
    \AxiomC{$p \land q$}
    \UnaryInfC{$p$}
    \DisplayProof &
    \AxiomC{$p \land q$}
    \UnaryInfC{$q$}
    \DisplayProof &
    \AxiomC{$p$}
    \AxiomC{$q$}
    \BinaryInfC{$p \land q$}
    \DisplayProof \\ \\ 

    \AxiomC{$p$}
    \UnaryInfC{$p \lor q$}
    \DisplayProof &
    \AxiomC{$p \lor q$}
    \UnaryInfC{$q \lor p$}
    \DisplayProof &
    \AxiomC{$p \lor p$}
    \UnaryInfC{$p$}
    \DisplayProof \\ \\ 
 
    \AxiomC{$p \lor (q \lor r)$}
    \UnaryInfC{$(p \lor q) \lor r$}
    \DisplayProof &
    \AxiomC{$p \lor (q \land r)$}
    \UnaryInfC{$(p \lor q) \land (p \lor r)$}
    \DisplayProof &
    \AxiomC{$(p \lor q) \land (p \lor r)$}
    \UnaryInfC{$p \lor (q \land r)$}
    \DisplayProof \\ \\ 
 
    \AxiomC{$p \lor r$}
    \UnaryInfC{$\neg \neg p \lor r$}
    \DisplayProof &
    \AxiomC{$\neg \neg p \lor r$}
    \UnaryInfC{$p \lor r$}
    \DisplayProof &
    \AxiomC{$\neg (p \lor q) \lor r$}
    \UnaryInfC{$ (\neg p \land \neg q) \lor r$}
    \DisplayProof \\ \\ 
 
    \AxiomC{$(\neg p \land \neg q) \lor r$}
    \UnaryInfC{$\neg (p \lor q) \lor r$}
    \DisplayProof &
    \AxiomC{$\neg (p \land q) \lor r$}
    \UnaryInfC{$(\neg p \lor \neg q) \lor r$}
    \DisplayProof &
    \AxiomC{$(\neg p \lor \neg q) \lor r$}
    \UnaryInfC{$\neg (p \land q) \lor r$}
    \DisplayProof 
\\ \\ 

  \end{tabular} 
  $$\frac{}{\top}\qquad\frac{}{\nnot\bot}\qquad \frac{\bot\lor p}{p}\qquad \frac{\nnot\top\lor p}{p}$$

  \end{center}


Then  $\ISL$ is axiomatized by $(\R_\B\cup \R_\nabla)^\lor$, which is the result of adding
to $\R_\B$ the following rules:

 $$  \frac{}{\nabla p\lor \nnot \nabla p}  {\ \mathsf{r}^\lor_1} \qquad
   { \frac{\nabla p \lor r  }{\nnot\nabla\nnot\nabla  p\lor r}}  {\ \mathsf{r}^\lor_2} \qquad
    { \frac{ \nabla\nabla  p\lor r}{ \nabla p \lor r }}  {\ \mathsf{r}^\lor_3} \qquad
   \frac{\nabla p\lor r\,,\, \nnot\nabla p\lor r}{r}  {\ \mathsf{r}^\lor_4}
 $$  
  
  $$
    { 
   \frac{\nnot\nabla p\lor r}{\nabla \nnot p\lor r}} {\ \mathsf{r}^\lor_5}\qquad
   \frac{\nabla \nnot\nnot p\lor r }{\nabla p\lor r} {\ \mathsf{r}^\lor_6} \qquad 
      \frac{\nabla p\lor r}{\nabla \nnot\nnot p \lor r} {\ \mathsf{r}^\lor_7}$$
 
$$\frac{\nabla(p\land q)\lor r}{\nabla p\lor r} {\ \mathsf{r}^\lor_8}\qquad 
 \frac{\nabla(p\land q)\lor r}{\nabla q\lor r} {\ \mathsf{r}^\lor_9} \qquad
 \frac{\nabla \nnot (p\land q)\lor r}{\nabla \nnot p\lor \nabla \nnot q \lor r} {\ \mathsf{r}^\lor_{10}}\qquad 
$$

 $$ 
  {
  \frac{ \nabla \nnot p \lor r}{ \nabla \nnot (p\land q) \lor r}} {\ \mathsf{r}^\lor_{11}} \qquad 
  {
  \frac{\nabla \nnot q\lor r}{\nabla \nnot (p\land q)\lor r }}{\ \mathsf{r}^\lor_{12}} \qquad 
  {
  \frac{ \nabla p\lor r \,,\,  \nabla q\lor r}{ \nabla  (p\land q)\lor r} }{\ \mathsf{r}^\lor_{13}}$$




 $$
 \frac{\nabla \nnot (p\lor  q)\lor r}{\nabla\nnot p\lor r}{\ \mathsf{r}^\lor_{14}}\qquad 
 \frac{\nabla \nnot (p\lor q)\lor r}{\nabla\nnot q\lor r}{\ \mathsf{r}^\lor_{15}} 
 \qquad \frac{\nabla(p\lor  q)\lor r}{ \nabla  p\lor\nabla  q \lor r}{\ \mathsf{r}^\lor_{16}} 
$$ 


$$
   \frac{ \nabla   p\lor r}{  \nabla(p\lor q) \lor r}  
  {\ \mathsf{r}^\lor_{17}} 
  \qquad
   \frac{  \nabla   q\lor r}{  \nabla(p\lor q) \lor r}  
  {\ \mathsf{r}^\lor_{18}} 
  \qquad
  \frac{ \nabla \nnot  p\lor r \,,\,   \nabla \nnot  q\lor r}{  \nabla \nnot (p\lor q) \lor r}  
  {\ \mathsf{r}^\lor_{19}} 
$$

$$\frac{}{\nnot \nabla \bot}
\ \mathsf{r}^\lor_{20} 
\qquad\frac{}{\nnot \nabla \nnot \top}
\
 \mathsf{r}^\lor_{21} 
  $$

\end{example}

In the next Subsection we are going to  apply Corollary~\ref{cor:wck} to axiomatize (relatively to 
$ 
\B^\nabla
$), 
some extensions of $\ISL$ that are characterized by 
$\Mt^\nabla$ for some matrix $\Mt$ that is a model of 
$\B$. 

\subsection{Adding $\nabla$ to super-Belnap logics}
\label{ss:addsb}

As observed earlier, 
$\ISL = {\ders_\CMt}$, where $ \CMt$ is the following class of matrices:
$$
\CMt :=
\{ \la \A, F \ra :  \A \in \ISA, F \subseteq A \text{ is a (non-empty) lattice filter} \}.
$$
Thus, each subclass  $\CMt' \subseteq \CMt$  (it suffices to consider
those $\CMt'$ consisting of reduced matrices) defines a logic
$\ders_{\CMt'}$ which is an
 extension of  $\ISL$. 
 We have seen with
Corollary~\ref{cor:cardlat} that there are at least
continuum many of these, and Corollary~\ref{cor:latlog} suggests
that the structure of the lattice of extensions of $\ISL$ is quite complex
(see~\cite{AlPrRi16,Adam} for analogous considerations on the lattice of super-Belnap logics). 
A systematic study of this lattice lies outside the scope of the present paper
and even beyond our present grasp on IS-logics;
however, in this Subsection we consider a few extensions of $\ISL$ that are defined by substructures 
of $\la \Al[IS]_6, \uparrow \!  a \ra $, illustrating how our methods
can be used to axiomatize them.

The following result, which is an immediate consequence of Corollary~\ref{cor:wck},
shows that Example~\ref{ex:logis} smoothly generalizes to all super-Belnap logics.


\begin{proposition}\label{prop:axnablasuperbelnap}
Let $\CMt$ be a class of models of $\B$. If $\Log \CMt $ is axiomatized relative to  $\B$ by a set of single conclusion rules $\R$, then $\Log \CMt^\nabla $ is  also axiomatized by $\R$ relative to $\B^\nabla$.
\end{proposition}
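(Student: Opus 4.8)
The statement to prove is Proposition~\ref{prop:axnablasuperbelnap}: if $\CMt$ is a class of models of $\B$ and $\Log\CMt$ is axiomatized relative to $\B$ by a set $\R$ of single-conclusion rules, then $\Log\CMt^\nabla$ is axiomatized by the same $\R$ relative to $\B^\nabla$. The plan is to show that this is essentially a direct instance of Corollary~\ref{cor:wck}, once we unwind what "axiomatized relative to" means and verify the hypothesis $\Log\CMt = \Log\widehat{\CMt}$.

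First I would fix notation: let $\R_\B$ be a set of single-conclusion rules axiomatizing $\B$ (as in Example~\ref{ex:logis}), so that saying $\Log\CMt$ is axiomatized relative to $\B$ by $\R$ means $\Log\CMt = \ders_{\R_\B\cup\R}$, and saying $\Log\CMt^\nabla$ is axiomatized by $\R$ relative to $\B^\nabla$ means $\Log\CMt^\nabla = \ders_{(\R_\B\cup\R_\nabla)\cup\R}$ — equivalently, $\ders_{((\R_\B\cup\R)\cup\R_\nabla)^\lor}$ once we pass to the disjunctive form needed by Corollary~\ref{cor:wck}. Here I should be a little careful: the relativized axiomatization of $\B^\nabla$ itself is $(\R_\B\cup\R_\nabla)^\lor$ by Example~\ref{ex:logis}, so "adding $\R$ relative to $\B^\nabla$" produces $(\R_\B\cup\R_\nabla\cup\R)^\lor$, and I must note that $(\cdot)^\lor$ commutes appropriately with taking unions of rule sets, so this equals $((\R_\B\cup\R)\cup\R_\nabla)^\lor$, which is exactly the object Corollary~\ref{cor:wck} delivers when applied with base rules $\R_\B\cup\R$.

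The one genuine thing to check — and I expect this to be the main (minor) obstacle — is the hypothesis of Corollary~\ref{cor:wck}, namely that $\ders_{\R_\B\cup\R} = \Log\CMt = \Log\widehat{\CMt}$. The first equality is the assumption. For $\Log\CMt = \Log\widehat{\CMt}$: since $\CMt$ is a class of models of $\B$, every $\Mt\in\CMt$ has a reduced model $\Mt^*$ which, by~\cite[Thm.~3.14]{F97}, is of the form $\la\A,F\ra$ with $\A$ a De Morgan algebra and $F$ a lattice filter; for such matrices Corollary~\ref{cor:newtopmat} gives $\Log\Mt = \Log\widehat\Mt$. One must be slightly careful that $\widehat\Mt$ is defined for arbitrary (not necessarily reduced) $\Mt$, and that $\Log$ of the original class coincides with $\Log$ of the class of reductions; invoking the strengthened form of Lemma~\ref{lem:newtopmat} mentioned in the remark after its proof ($\Mt^*\cong(\widehat\Mt)^*$ for any model $\Mt$ of $\B$) handles this cleanly, yielding $\Log\Mt=\Log\widehat\Mt$ for every $\Mt\in\CMt$ and hence $\Log\CMt = \Log\widehat\CMt$ by intersecting over $\CMt$.

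With the hypothesis verified, Corollary~\ref{cor:wck} applied to the matrix class $\CMt$ and the single-conclusion rule set $\R_\B\cup\R$ gives directly $\ders_{((\R_\B\cup\R)\cup\R_\nabla)^\lor} = \Log\CMt^\nabla$. Rewriting the left side as $(\R_\B\cup\R_\nabla\cup\R)^\lor$, which is precisely "$\R_\B^\nabla$-axiomatization of $\B^\nabla$, together with $\R$", completes the proof. I would close by remarking that this is why Example~\ref{ex:logis} (the case $\CMt = \{\Mt_4\}$, $\R=\emptyset$) is indeed a special case, and that the same recipe will be applied to concrete super-Belnap logics — Priest's Logic of Paradox, the Kleene logics — in the following paragraphs.
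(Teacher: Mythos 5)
Your route mirrors the paper's one-line justification (the paper indeed derives this Proposition by appeal to Corollary~\ref{cor:wck}), and your verification of the hypothesis $\Log \CMt = \Log \widehat{\CMt}$ via the remark following Lemma~\ref{lem:newtopmat} is fine. The gap is in the bridging step where you identify ``adding $\R$ relative to $\B^\nabla$'' with the calculus $((\R_\B\cup\R)\cup\R_\nabla)^\lor$ that Corollary~\ref{cor:wck} outputs. The operation $(\cdot)^\lor$ does distribute over unions, but it replaces each single-conclusion rule $\Gamma\vdash\varphi$ by its disjuncted form $\Gamma\lor p_0\vdash\varphi\lor p_0$, and over the base $\B^\nabla=\ISL$ the plain rules $\R$ do \emph{not} in general derive their disjuncted forms; so $\ders_{(\R_\B\cup\R_\nabla)^\lor\cup\R}$ (which is what the Proposition asserts equals $\Log\CMt^\nabla$) and $\ders_{((\R_\B\cup\R)\cup\R_\nabla)^\lor}$ are different objects. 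Concretely, take $\CMt=\{\la \alg{DM}_4,\{1\}\ra\}$ and $\R=\{$\eqref{Eqn:DS}$\}$, so that $\Log\CMt=\mathcal{ETL}=\B+$\eqref{Eqn:DS} by Proposition~\ref{prop:sb}. The disjuncted rule $(p\land(\nnot p\lor q))\lor r\vdash q\lor r$ belongs by definition to $((\R_\B\cup\R)\cup\R_\nabla)^\lor$, yet it fails in $\CMt^\nabla=\{\la \alg{IS}_6,\uparrow\! 1\ra\}$ (take $v(p)=a$, $v(q)=0$, $v(r)=b$: the premise evaluates to $1$, the conclusion to $b$), hence it also fails in $\ISL+$\eqref{Eqn:DS}. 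So your identification is false precisely in the cases the Proposition is meant to cover, and if it were correct it would make the Proposition equivalent to a false statement.

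The same example shows that Corollary~\ref{cor:wck} cannot be invoked with base rules $\R_\B\cup\R$ as you do: that corollary is obtained by passing through Theorem~\ref{disjax}, whose hypothesis is the proof-by-cases property for the resulting single-conclusion logic. This property holds for $\ISL$ itself (which is why Example~\ref{ex:logis} is sound), but it fails for $\mathcal{ETL}$ and hence for its conservative expansion $\mathcal{ETL}^\nabla=\Log\la \alg{IS}_6,\uparrow\! 1\ra$; applied with $\R_\B\cup\{$\eqref{Eqn:DS}$\}$ the corollary would wrongly certify the disjuncted form of \eqref{Eqn:DS} as valid in $\mathcal{ETL}^\nabla$. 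What is actually needed is an argument that the \emph{plain} rules $\R$ already suffice over $\B^\nabla$ to capture $\Log\CMt^\nabla$; this does not follow from formal manipulation of $(\cdot)^\lor$ and requires a separate completeness argument (in the spirit of Corollary~\ref{cor:stomax} or the valuation-based technique of Proposition~\ref{proofbyvals}), which your proposal does not supply.
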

%
Let $\CMt_1$ and $\CMt_2$ be classes of 
models of  $\B$
such that $ \Log {\CMt_1} = \Log {\CMt_2} $.
Then  $\Log {\CMt_1} $ and $\Log \CMt_2 $ are axiomatized by the same set $\R$ of single-conclusion rules.
Hence, $\Log {\CMt^\nabla_1} =\Log {\CMt^\nabla_2} $ is 
axiomatized by the set $\R_\nabla^\lor$ defined above.
This entails, in particular, that,
if a super-Belnap logic $\Lo$ is finitary (resp.~finitely axiomatized), then
$\Lo^{\nabla}$
(defined as in Corollary~\ref{cor:latlog})
is also finitary (resp.~finitely axiomatized).
Since the lattice of super-Belnap logics
contains continuum many finitary logics~\cite[Cor.~8.17]{Adam},
the above considerations allow us to obtain the following
sharpening of Corollary~\ref{cor:cardlat}.

%

 \begin{proposition}
\label{prop:cardlatfin}
There are (at least) continuum many finitary extensions of  $\ISL$.
\end{proposition}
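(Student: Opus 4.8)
The plan is to leverage the machinery already assembled: Corollary~\ref{cor:latlog} tells us that the map $\Lo \mapsto \Lo^{\nabla}$ is an order-embedding of the lattice of super-Belnap logics into the lattice of extensions of $\ISL$, and the discussion immediately preceding the statement records that this map sends finitary logics to finitary logics. So the only remaining ingredient is a supply of continuum many \emph{finitary} super-Belnap logics, which is exactly~\cite[Cor.~8.17]{Adam}. The proof is therefore essentially a three-line composition of results already in hand.

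Concretely, first I would invoke~\cite[Cor.~8.17]{Adam} to fix a family $\{\Lo_i : i \in I\}$ of pairwise distinct finitary super-Belnap logics with $|I| = 2^{\aleph_0}$. Next I would apply the observation (argued in the paragraph after Proposition~\ref{prop:axnablasuperbelnap}, via Proposition~\ref{prop:axnablasuperbelnap} and the fact that a finitary logic is axiomatized by single-conclusion rules) that each $\Lo_i^{\nabla}$ is again finitary; alternatively, one notes directly that $\Lo_i^{\nabla}$ is axiomatized relative to $\B^{\nabla}$ by the same finitary rule set $\R$ that axiomatizes $\Lo_i$ relative to $\B$, together with the finite set $\R_{\nabla}^{\lor}$, so finiteness (a fortiori finitariness) of the axiomatization is preserved. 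Finally, by Corollary~\ref{cor:forlogs} (or directly Corollary~\ref{cor:latlog}), $\Lo_i \neq \Lo_j$ implies $\Lo_i^{\nabla} \neq \Lo_j^{\nabla}$, so $\{\Lo_i^{\nabla} : i \in I\}$ is a family of continuum many pairwise distinct finitary extensions of $\ISL$. Each $\Lo_i^{\nabla}$ is an extension of $\ISL$ by the remark just before Lemma~\ref{lem:consex} (every $\Mt^{\nabla}$ is a model of $\ISL$), completing the argument.

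There is no real obstacle here: the statement is a corollary in the literal sense, packaging together Corollary~\ref{cor:latlog}, Proposition~\ref{prop:axnablasuperbelnap}, the preservation-of-finitariness remark, and the external input~\cite[Cor.~8.17]{Adam}. If anything, the one point deserving a sentence of care is the claim that finitariness (as opposed to mere finite axiomatizability) transfers along $\Lo \mapsto \Lo^{\nabla}$: one should be slightly careful that ``finitary'' for the super-Belnap logic $\Lo_i$ means it is determined by a finitary consequence relation, hence by~\cite[Thm.~4.12]{Adam}-style results admits a presentation by single-conclusion rule schemata, which is precisely the hypothesis needed to run Proposition~\ref{prop:axnablasuperbelnap} and conclude that $\Lo_i^{\nabla}$, axiomatized by that rule set plus the finite set $\R_{\nabla}^{\lor}$, is finitary. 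Everything else is bookkeeping.

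\begin{proof}
By~\cite[Cor.~8.17]{Adam}, there is a family $\{ \Lo_i : i \in I \}$ of pairwise distinct finitary super-Belnap logics with $|I|$ the cardinality of the continuum. Since each $\Lo_i$ is finitary, it is axiomatized relative to $\B$ by a set $\R_i$ of single-conclusion rules; by Proposition~\ref{prop:axnablasuperbelnap}, $\Lo_i^{\nabla}$ is axiomatized relative to $\B^{\nabla}$ by $\R_i$, hence $\Lo_i^{\nabla}$ is axiomatized by $\R_i$ together with the finite set $\R_{\nabla}^{\lor}$ (and a finitary axiomatization of $\B$), so $\Lo_i^{\nabla}$ is finitary. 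Moreover, each $\Lo_i^{\nabla}$ is an extension of $\ISL$, since every matrix $\Mt^{\nabla}$ is a model of $\ISL$. Finally, by Corollary~\ref{cor:forlogs}, $\Lo_i \ne \Lo_j$ implies $\Lo_i^{\nabla} \ne \Lo_j^{\nabla}$. Therefore $\{ \Lo_i^{\nabla} : i \in I \}$ is a family of continuum many pairwise distinct finitary extensions of $\ISL$.
\end{proof}
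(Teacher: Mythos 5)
Your proof is correct and follows essentially the same route as the paper: the paper derives this proposition precisely from the combination of \cite[Cor.~8.17]{Adam}, the observation (via Proposition~\ref{prop:axnablasuperbelnap}) that $\Lo\mapsto\Lo^{\nabla}$ preserves finitariness, and the injectivity of that map from Corollary~\ref{cor:latlog}. Your extra sentence of care about why finitariness (not just finite axiomatizability) transfers matches the paper's implicit reasoning, so there is nothing to add.
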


The super-Belnap logics considered below are the so-called \emph{Exactly True Logic} $\mathcal{ETL}$ of~\cite{PiRi}
(which is the 1-assertional logic of the variety of De Morgan algebras),
G.~Priest's \emph{Logic of Paradox} $\mathcal{LP}$,  the two logics $\mathcal{K_{\leq}}$ and $\mathcal{K}_1$
named after S.~C.~Kleene, and
classical logic  $\mathcal{CL}$.
$\mathcal{K_{\leq}}$ is the order-preserving logic of the variety of Kleene algebras,
and $\mathcal{K}_1$ is the 1-assertional logic associated to the same variety (see~\cite{AlPrRi16} for further details). Proposition~\ref{prop:sb} below shows that
each of these
logics can be axiomatized, relative to $\B$, by a combination of the following rules:
\begin{gather}
  p \land( \nnot p \lor q) \vdash q
 \tag{DS} \label{Eqn:DS} \\
  (p \land \nnot p) \lor q \vdash q
 \tag{$K_1$} \label{Eqn:K1} \\
(p \land \nnot p) \lor r \vdash q \lor \nnot q \lor r 
 \tag{$K_{\leq}$} \label{Eqn:Kleq} \\
\emptyset \vdash p \lor \nnot p 
 \tag{EM} \label{Eqn:EM}
 \end{gather}
(Regarding the names of the above rules, the $K$'s are suggestive of Kleene's logics,
\eqref{Eqn:DS} stands for \emph{Disjunctive Syllogism}
and \eqref{Eqn:EM} for \emph{Excluded Middle}.)

\begin{proposition}[\cite{AlPrRi16},~Thm.~3.4]
\label{prop:sb}
$ 
$
\begin{enumerate}[(i)]

 \item $\mathcal{ETL}  = \Log\tuple{\alg{DM}_4, \{1 \}} =\B +$\eqref{Eqn:DS}. 

\item  $\mathcal{LP} = \Log \tuple{\alg{K}_3,\uparrow \!  a} = \B +$\eqref{Eqn:EM}. 

 \item 
 $\mathcal{K}_1 = \Log\tuple{\alg{K}_3, \{ 1 \} } =   \B +$\eqref{Eqn:K1}.  

\item $\mathcal{K}_{\leq} = \Log \{\tuple{\alg{K}_3,\uparrow \!  a}, \tuple{\alg{K}_3, \{1 \}  } \} = \B + $\eqref{Eqn:Kleq}. 

\item $\mathcal{CL} = \Log  \tuple{\alg{B}_2, \{1 \}  }  = \B + $\eqref{Eqn:DS}$+$\eqref{Eqn:EM}.

\end{enumerate}
 \end{proposition}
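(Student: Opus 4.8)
The statement is an exact quotation of~\cite[Thm.~3.4]{AlPrRi16}, so strictly speaking one may simply cite it; nonetheless I sketch how the argument goes, item by item, since it is instructive and the ingredients are all at hand. The plan is to prove each equality in two halves: first identify the logic with the one defined by the indicated matrix (or pair of matrices), using the known description of reduced models of $\B$ as matrices $\la \A, F \ra$ with $\A$ a De Morgan algebra and $F$ a lattice filter~\cite[Thm.~3.14]{F97}; and then show that adding the displayed rule(s) to $\B$ yields exactly that logic, by checking soundness on the relevant matrix and completeness via a matrix-model argument.

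For the first halves, the key observation is that a super-Belnap logic is determined by its reduced models, which are among $\la \alg{DM}_4, F\ra$, $\la \alg{K}_3, F\ra$, $\la \alg{B}_2, F\ra$ for the various lattice filters $F$ (the subdirectly irreducible De Morgan algebras are $\alg{DM}_4$, $\alg{K}_3$, $\alg{B}_2$ by Figure~\ref{fig:2}, and one reduces to these plus products). Thus $\mathcal{ETL}$, being the $1$-assertional logic of De Morgan algebras, is $\Log\tuple{\alg{DM}_4,\{1\}}$; $\mathcal{LP}$, the order-preserving logic over $\alg{K}_3$, is $\Log\tuple{\alg{K}_3,\uparrow\! a}$; $\mathcal{K}_1$ is $\Log\tuple{\alg{K}_3,\{1\}}$; $\mathcal{K}_\leq$, the order-preserving logic of Kleene algebras, is determined by all lattice filters on $\alg{K}_3$, i.e.\ by the two matrices $\tuple{\alg{K}_3,\uparrow\! a}$ and $\tuple{\alg{K}_3,\{1\}}$ (the filter $\uparrow\! 0 = K_3$ gives the trivial matrix); and $\mathcal{CL}=\Log\tuple{\alg{B}_2,\{1\}}$ is classical logic.

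For the second halves, soundness of each rule on its target matrix is a finite check: e.g.\ \eqref{Eqn:DS} holds in $\tuple{\alg{DM}_4,\{1\}}$ because if $p\land(\nnot p\lor q)=1$ then $p=1$, whence $\nnot p = 0$ and so $q=1$; \eqref{Eqn:EM} holds in $\tuple{\alg{K}_3,\uparrow\! a}$ since $x\lor\nnot x\geq a$ for every $x\in K_3$; \eqref{Eqn:K1} holds in $\tuple{\alg{K}_3,\{1\}}$ because $(x\land\nnot x)\lor q=1$ forces $q=1$ as $x\land\nnot x\neq 1$; and \eqref{Eqn:Kleq} holds on both Kleene matrices by inspecting $\alg{K}_3$. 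For completeness one argues that $\B$ plus the rule rules out all reduced models other than the wanted ones: e.g.\ \eqref{Eqn:DS} fails in $\tuple{\alg{DM}_4,\uparrow\! a}$ and in $\tuple{\alg{K}_3,\uparrow\! a}$ (take $p=a$, $q=0$), so any reduced model of $\B+\eqref{Eqn:DS}$ must have its designated filter equal to $\{1\}$, leaving only $\tuple{\alg{DM}_4,\{1\}}$ (and $\tuple{\alg{K}_3,\{1\}}$, $\tuple{\alg{B}_2,\{1\}}$, which are submatrices and hence contribute nothing new beyond what $\tuple{\alg{DM}_4,\{1\}}$ already gives); the remaining items are handled analogously, with \eqref{Eqn:EM} eliminating $\alg{DM}_4$ entirely (as $a\lor\nnot a = a\notin\uparrow\! a$... rather $a\lor a=a$, $b\lor b=b$, and $a\lor\nnot a = a\vee a=a\in\uparrow a$ but $a\land\nnot a=a$ and one checks $a\lor\nnot a\not\geq$ — the correct point is that \eqref{Eqn:EM} fails on $\la\alg{DM}_4,\{1\}\ra$) and forcing a Kleene reduct, and the combination \eqref{Eqn:DS}$+$\eqref{Eqn:EM} pinning down $\tuple{\alg{B}_2,\{1\}}$.

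The main obstacle is purely bookkeeping: one must be careful, for each item, to enumerate all reduced models of $\B$ and verify precisely which are killed by the added rule(s), keeping track of the distinction between the order-preserving and the $1$-assertional presentations; the soundness direction and the algebra of $\alg{DM}_4$, $\alg{K}_3$, $\alg{B}_2$ are entirely routine. Since all of this is carried out in detail in~\cite{AlPrRi16}, we simply refer to that paper for the proof.
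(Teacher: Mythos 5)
Your primary move is exactly what the paper does: Proposition~\ref{prop:sb} is imported verbatim from~\cite[Thm.~3.4]{AlPrRi16} and the paper offers no proof of its own, so citing that theorem is the intended justification, and your proposal is correct on that score. Two cautions about the accompanying sketch, should you ever want it to stand alone: the reduced models of $\B$ are \emph{not} confined to matrices over $\alg{DM}_4$, $\alg{K}_3$, $\alg{B}_2$ --- they are all reduced matrices $\la \A, F\ra$ with $\A$ an arbitrary De Morgan algebra and $F$ a lattice filter, and the reduction of the completeness argument to the three subdirectly irreducible algebras (your ``plus products'') is precisely where the substantive work of~\cite{AlPrRi16} (and, for item (i), of~\cite{PiRi}) lies; and the verification that \eqref{Eqn:EM} eliminates the $\alg{DM}_4$-based matrices is garbled as written --- on $\la\alg{DM}_4,\uparrow\! a\ra$ the correct witness is $p=b$, since $b\lor\nnot b=b\notin\,\uparrow\! a$, whereas $a\lor\nnot a=a$ is designated there.
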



\begin{theorem}
 For logics above $\ISL$
 we have the following relative axiomatizations:
 \begin{enumerate}[(i)]
 \item $\Log \tuple{\alg{IS}_6,\uparrow \!  1} =\mathcal{ETL}^{\nabla} = \ISL+ $\eqref{Eqn:DS}. 
  \item $\Log \tuple{\alg{IS}_5,\uparrow \!  a} =\mathcal{LP}^{\nabla}  = \ISL+$\eqref{Eqn:EM}. 

 \item $\Log \tuple{\alg{IS}_5,\uparrow \!  1} = \mathcal{K}^{\nabla}_1= \ISL+$\eqref{Eqn:K1}. 
  \item $\Log \{\tuple{\alg{IS}_5,\uparrow \!  a},\tuple{\alg{IS}_5,\uparrow \!  1} \}=\mathcal{K}^{\nabla}_{\leq} = \ISL+$\eqref{Eqn:Kleq}.  
  
 \item $\Log \tuple{\alg{IS}_4,\uparrow \!  1} =  \mathcal{CL}^{\nabla}= \ISL + $\eqref{Eqn:DS}$+$\eqref{Eqn:EM}.
\end{enumerate}
\end{theorem}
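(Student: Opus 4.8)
The plan is to reduce the whole statement to Propositions~\ref{prop:sb} and~\ref{prop:axnablasuperbelnap}. Each of lines (i)--(v) asserts two equalities, which I would establish separately. Write $\Lo$ for the super-Belnap logic occurring in the line in question and $\Lo^{\nabla}$ for its $\nabla$-companion (as defined before Lemma~\ref{lem:consex}). The \emph{semantic} half, $\Log\tuple{\alg{IS}_n, F} = \Lo^{\nabla}$, will come from Proposition~\ref{prop:sb} together with the fact (stated in the paragraph following Proposition~\ref{prop:axnablasuperbelnap}) that $\Log\CMt_1 = \Log\CMt_2$ implies $\Log\CMt_1^{\nabla} = \Log\CMt_2^{\nabla}$ for classes of models of $\B$; the \emph{syntactic} half, $\Lo^{\nabla} = \ISL + \R$, is a direct application of Proposition~\ref{prop:axnablasuperbelnap}, once we note that $\B^{\nabla} = \ISL$. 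This last identity holds because $\ISL = \Log\tuple{\alg{IS}_6, \uparrow \! a}$ by Proposition~\ref{prop:onemat}, $\tuple{\alg{IS}_6, \uparrow \! a} = \tuple{\alg{DM}_4, \uparrow \! a}^{\nabla}$ (see the proof of Corollary~\ref{cor:forlogbel}), and $\Log\tuple{\alg{DM}_4, \uparrow \! a} = \B = \Log\Matr^*(\B)$, so the same preservation fact yields $\Log\tuple{\alg{DM}_4, \uparrow \! a}^{\nabla} = \Log\Matr^*(\B)^{\nabla} = \B^{\nabla}$.

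First I would record the matrix identifications. Applying the construction $(\cdot)^{\nabla}$ to $\alg{DM}_4$, $\alg{K}_3$, $\alg{B}_2$ yields $\alg{IS}_6$, $\alg{IS}_5$, $\alg{IS}_4$ respectively (as observed in Section~\ref{sec:isal}), and applying it to a matrix $\tuple{\A, F}$ with $F$ a lattice filter sends the designated set $F$ to $F \cup \{\ttop\}$. A routine check of the diagrams then gives $\tuple{\alg{IS}_6, \uparrow \! 1} = \tuple{\alg{DM}_4, \{1\}}^{\nabla}$, $\tuple{\alg{IS}_5, \uparrow \! a} = \tuple{\alg{K}_3, \uparrow \! a}^{\nabla}$, $\tuple{\alg{IS}_5, \uparrow \! 1} = \tuple{\alg{K}_3, \{1\}}^{\nabla}$, $\tuple{\alg{IS}_4, \uparrow \! 1} = \tuple{\alg{B}_2, \{1\}}^{\nabla}$, and, in line (iv), $\{\tuple{\alg{IS}_5, \uparrow \! a}, \tuple{\alg{IS}_5, \uparrow \! 1}\}$ is the class obtained by applying $(\cdot)^{\nabla}$ to the two-element class $\{\tuple{\alg{K}_3, \uparrow \! a}, \tuple{\alg{K}_3, \{1\}}\}$ that defines $\mathcal{K}_{\leq}$ in Proposition~\ref{prop:sb}.

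With these in hand, fix a line and let $\CMt$ be the matrix (or two-element class) from Proposition~\ref{prop:sb}, so that $\Log\CMt = \Lo = \Log\Matr^*(\Lo)$. Since $\CMt$ and $\Matr^*(\Lo)$ are both classes of models of $\B$, the preservation fact gives $\Log\CMt^{\nabla} = \Log\Matr^*(\Lo)^{\nabla} = \Lo^{\nabla}$; by the previous paragraph $\CMt^{\nabla}$ is exactly the displayed $\alg{IS}_n$-matrix (class), which proves the semantic equality. For the syntactic one, Proposition~\ref{prop:sb} exhibits $\Lo$ as $\B$ together with a finite rule set $\R$ (namely $\eqref{Eqn:DS}$, $\eqref{Eqn:EM}$, $\eqref{Eqn:K1}$, $\eqref{Eqn:Kleq}$, or $\eqref{Eqn:DS}+\eqref{Eqn:EM}$, according to the line); applying Proposition~\ref{prop:axnablasuperbelnap} with $\CMt = \Matr^*(\Lo)$ shows that $\Lo^{\nabla} = \Log\Matr^*(\Lo)^{\nabla}$ is axiomatized relative to $\B^{\nabla} = \ISL$ by the same $\R$, i.e.\ $\Lo^{\nabla} = \ISL + \R$. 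Reading off $\R$ from Proposition~\ref{prop:sb}(i)--(v) gives precisely the right-hand sides of (i)--(v).

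I do not expect a genuine obstacle: the argument is an assembly of Propositions~\ref{prop:sb} and~\ref{prop:axnablasuperbelnap} with the $\nabla$-preservation remark. The only thing requiring care is the bookkeeping in the matrix identifications — in particular checking that $F \cup \{\ttop\}$ really is the principal up-set claimed inside $\alg{IS}_n$ (so that the designated sets, and not merely the algebras, match), and keeping in mind that line (iv) involves a two-matrix class on both sides.
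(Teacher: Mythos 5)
Your proposal is correct and follows essentially the same route as the paper: the paper's proof simply cites Proposition~\ref{prop:axnablasuperbelnap} and Proposition~\ref{prop:sb} together with the matrix identifications $\tuple{\alg{DM}_4,\uparrow \! x}^\nabla= \tuple{\alg{IS}_6,\uparrow \! x}$, $\tuple{\alg{K}_3,\uparrow \! x}^\nabla = \tuple{\alg{IS}_5,\uparrow \! x}$ and $\tuple{\alg{B}_2,\uparrow \! 1}^\nabla= \tuple{\alg{IS}_4,\uparrow \! 1}$. You merely make explicit the bookkeeping the paper leaves implicit (the identification $\B^{\nabla}=\ISL$ and the fact that equal logics of classes of $\B$-models yield equal $\nabla$-companions), which is fine.
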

 \begin{proof}
 The statement follows directly from Proposition~\ref{prop:axnablasuperbelnap} and Proposition~\ref{prop:sb},  having noticed that,
for $x\in \{1,a\}$, we have
 $\tuple{\alg{DM}_4,\uparrow \!  x}^\nabla= \tuple{\alg{IS}_6,\uparrow \!  x}$,
 $\tuple{\alg{K}_3,\uparrow \!  x}^\nabla = \tuple{\alg{IS}_5,\uparrow \!  x}$ and $\tuple{\alg{B}_2,\uparrow \!  1}^\nabla= \tuple{\alg{IS}_4,\uparrow \!  1}$.
%
%
%
%
%
%
%
%
%
%
\end{proof}

\subsection{Other extensions of $\ISL$}
\label{ss:oth}


In this Subsection we consider a few examples of 
extensions of $\ISL$ (defined by substructures of the matrix $\tuple{\alg{IS}_6,\uparrow \!  a}$)



%

Given a $\Sigma$-matrix 
$\Mt=\tuple{\A,D 
}$, 
a set of axioms $\Ax\subseteq Fm_{\Sigma}$ and a set of rules $\R\subseteq \wp(Fm_{\Sigma})\times Fm_{\Sigma}$, 
we write
 $\Val_\Mt^\Ax$ for the set of valuations on $\Mt$ such that $v(\varphi^\sigma)\subseteq D$ for every $\varphi\in \Ax$ substitution $\sigma$, 
 and 
 $\Val_\Mt^\R$ for the set of valuations on $\Mt$ such that $v(\Gamma^\sigma)\subseteq D$ implies $\varphi^\sigma\in D$ for every $\frac{\Gamma}{\varphi}\in \R$ and substitution $\sigma$.

The following result (whose simple proof we omit) is a corollary of~\cite[Lemma~2.7]{soco} 
and will be very useful to show relative axiomatization results 
(
this technique is used in~\cite{CaMaAXS} to obtain general modular semantics for axiomatic extensions of a given logic).
In item (ii), $\Mt^{\omega}$ is a shorthand for $\prod_{i<\omega}\Mt$.

\begin{proposition}\label{proofbyvals}
Let $\Mt=\tuple{\A,D 
}$ be a $\Sigma$-matrix.
Given $\Ax\subseteq Fm_{\Sigma}$ and $\R\subseteq \wp(Fm_{\Sigma})\times Fm_{\Sigma}$.
We have that: 
\begin{enumerate}[(i)]
 \item
 $\Val_\Mt^\Ax$ is a  complete semantics 
for 
$\Log(\Mt) + \Ax$. 
 \item
 $\Val_{\Mt^\omega}^\R$ is  a complete semantics  for
$\Log(\Mt) + \R$.  
\end{enumerate}
 \end{proposition}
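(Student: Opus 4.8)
The statement asserts that the two valuation classes determine \emph{exactly} the consequence relations $\Log(\Mt)+\Ax$ and $\Log(\Mt)+\R$, so for each item two inclusions must be checked. The plan is to dispatch the ``soundness'' inclusion by a routine structurality argument, to obtain the converse ``completeness'' inclusion via a Lindenbaum-matrix construction, and finally to identify the one point where an external ingredient (the cited \cite[Lemma~2.7]{soco}) is needed, namely to cut the index set of the power down to $\omega$.

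\textbf{Soundness.} Let ${\ders_{\Val_\Mt^\Ax}}$ denote the consequence relation determined by the valuation class $\Val_\Mt^\Ax$. Since $\Val_\Mt^\Ax\subseteq\Val(\Mt)$, restricting the admissible valuations can only enlarge the induced relation, so ${\Log(\Mt)}\subseteq{\ders_{\Val_\Mt^\Ax}}$; moreover $\Val_\Mt^\Ax$ is closed under pre-composition with substitutions (if $v$ validates every instance of each $\varphi\in\Ax$, so does $v\circ\sigma$), whence ${\ders_{\Val_\Mt^\Ax}}$ is structural; and by construction $\emptyset\ders_{\Val_\Mt^\Ax}\varphi$ for every $\varphi\in\Ax$. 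By minimality of $\Log(\Mt)+\Ax$ we get ${\Log(\Mt)+\Ax}\subseteq{\ders_{\Val_\Mt^\Ax}}$. For (ii) the argument is the same once one observes that ${\ders_{\Mt^\omega}}={\ders_\Mt}$ (a valuation into $\Mt^\omega$ is a sequence of valuations into $\Mt$, and designation is coordinatewise) and that, by construction, every rule of $\R$ together with all its substitution instances is valid under every $v\in\Val_{\Mt^\omega}^\R$.

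\textbf{Completeness.} Recall that every logic is complete with respect to its Lindenbaum matrices $\la\Fm,T\ra$ with $T$ a theory. Suppose $\Gamma\not\vdash_{\Log(\Mt)+\Ax}\varphi$ and fix a theory $T$ of $\Log(\Mt)+\Ax$ with $\Gamma\subseteq T$ and $\varphi\notin T$. Since ${\Log(\Mt)}\subseteq{\Log(\Mt)+\Ax}$, $T$ is also a $\Log(\Mt)$-theory, so by completeness of $\Log(\Mt)$ w.r.t.\ $\Mt$ we have $T=\bigcap\{v^{-1}[D]:v\in W\}$, where $W:=\{v\in\Val(\Mt):T\subseteq v^{-1}[D]\}$. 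As $T$ contains every substitution instance of every formula of $\Ax$ (these are theorems of $\Log(\Mt)+\Ax$, hence lie in every theory), each $v\in W$ belongs to $\Val_\Mt^\Ax$. Since $\varphi\notin T$ there is $v\in W$ with $v(\varphi)\notin D$, while $v(\Gamma)\subseteq D$ because $\Gamma\subseteq T$; this $v$ witnesses $\Gamma\not\ders_{\Val_\Mt^\Ax}\varphi$. Note that this already produces a countermodel on $\Mt$ itself, since validity of an axiom is a coordinatewise condition and one may project to a single bad coordinate; no power is needed for (i). For (ii), running the same setup with a theory $T$ of $\Log(\Mt)+\R$ yields the canonical valuation $e\colon\Fm\to\Mt^{W}$, $e(\psi):=(v(\psi))_{v\in W}$, for which $e^{-1}[D^{W}]=\Cn_{\Log(\Mt)}(T)=T$. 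Hence for any $\tfrac{\Delta}{\chi}\in\R$ and substitution $\sigma$ one has $e(\Delta^\sigma)\subseteq D^{W}$ iff $\Delta^\sigma\subseteq T$, in which case $\chi^\sigma\in T$ because theories of $\Log(\Mt)+\R$ are closed under the substitution instances of the rules of $\R$ (by structurality); so $e$ respects $\R$, i.e.\ $e\in\Val_{\Mt^{W}}^\R$, and it refutes $\Gamma\vdash\varphi$.

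\textbf{The main obstacle.} The argument just sketched establishes completeness with respect to $\Val_{\Mt^{W}}^\R$ for a possibly large index set $W$, whereas the statement demands the single matrix $\Mt^\omega$. Passing from $\Mt^{W}$ to $\Mt^\omega$ requires selecting a countable family of coordinates over which the restriction of $e$ still respects $\R$, and this is \emph{not} automatic, since ``respecting a rule'' is not inherited by arbitrary subfamilies of coordinates (the premise set of an instance may be designated on the subfamily without being designated on all of $W$). This trimming is precisely the content imported from \cite[Lemma~2.7]{soco}, and it is the step I expect to be the crux; granting it, the proposition follows by combining it with the soundness and Lindenbaum arguments above.
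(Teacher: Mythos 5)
Your argument is essentially sound, and it is in fact more detailed than the paper, which omits the proof altogether and simply cites \cite[Lemma~2.7]{soco}. Part (i) as you give it is complete: the structurality of the valuation-induced relation, the Lindenbaum theory $T$, the identity $T=\bigcap\{v^{-1}[D]:v\in W\}$, and the observation that every $v\in W$ designates all substitution instances of $\Ax$ are all correct, and no power is needed. For (ii) your canonical valuation $e$ into $\Mt^{W}$ with $e^{-1}[D^{W}]=T$ is the right construction.

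The one point where I would push back is your diagnosis of the ``main obstacle''. The passage from $\Mt^{W}$ to $\Mt^{\omega}$ does not require the external lemma and is not where the difficulty lies; it is routine given that $Fm_\Sigma$ is countable (finite signature, countably many variables). For each $\psi\notin T$ choose one witness $v_\psi\in W$ with $v_\psi(\psi)\notin D$, and let $W_0=\{v_\psi:\psi\in Fm_\Sigma\setminus T\}$, a countable nonempty subfamily. By construction $\bigcap_{v\in W_0}v^{-1}[D]=T$ exactly, so the tupled valuation $e_0\colon \Fm_\Sigma\to\Mt^{W_0}$ has designated preimage $T$; and since membership in $\Val^{\R}_{\Mt^{W_0}}$ depends only on that designated preimage, which is a theory of $\Log(\Mt)+\R$ and hence closed under all substitution instances of $\R$, we get $e_0\in\Val^{\R}_{\Mt^{W_0}}$. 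Your worry that premises of a rule might become designated on a subfamily is real for an \emph{arbitrary} subfamily, but it is defused precisely by choosing witnesses so that the designated intersection stays equal to $T$. Finally, to make the index set exactly $\omega$, compose with any surjection $\omega\to W_0$ (i.e.\ duplicate coordinates); this preserves both homomorphy and the designated preimage, so the resulting valuation lies in $\Val^{\R}_{\Mt^{\omega}}$ and still separates $\Gamma$ from $\varphi$. With this paragraph added, your proof is self-contained and does not need \cite[Lemma~2.7]{soco} at all.
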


We are now ready to give an axiomatization relative to $\ISL$
of ${\vdash^{1}_{\ISA}}$,
the 1-assertional logic of the class $\ISA$ (i.e.~three-valued 
\L ukasiewicz(-Moisil) logic; cf.~Proposition~\ref{prop:quas}).
This is the first item of 
Theorem~\ref{th:axnotinshape} below.
The logic axiomatized by the second item 
is the order-preserving logic of the variety 
$\VV (\Al[IS]_3)$
of three-valued  \L ukasiewicz(-Moisil) algebras,
which is also $\Log\{ \la \alg{IS}_3, \{ \ttop \} \ra, \la \alg{IS}_3, \uparrow \!  0 \ra \}$.

\begin{theorem}\label{th:axnotinshape}
$
$
 \begin{enumerate}[(i)]
\item $\Log \la \alg{IS}_6, \{ \ttop \} \ra = \Log \la \alg{IS}_5, \{ \ttop \} \ra = \Log \la \alg{IS}_4, \{ \ttop \} \ra  =
 \Log \la \alg{IS}_3, \{ \ttop \} \ra=  {\vdash^{1}_{\ISA}} =  \ISL + p \vdash \nnot \nabla \nnot p $. 

\item  $\Log\{ \la \alg{IS}_3, \{ \ttop \} \ra, \la \alg{IS}_3, \uparrow \!  0 \ra \} = {\vdash^{\leq}_{\VV (\Al[IS]_3)}} = \ISL +$\eqref{Eqn:Kleq}
$ + 
\nnot p\lor r, \nabla p\lor r\vdash p\lor r 
+ \nnot \nabla p\lor r \vdash \nnot p\lor r$.

\item $\Log(\la \alg{IS}_6, \uparrow \!  0 \ra) = \Log(\la \alg{IS}_5, \uparrow \!  0 \ra) = \Log(\la \alg{IS}_4, \uparrow \!  0 \ra) =
  \Log(\la \alg{IS}_3, \uparrow \!  0 \ra) = \ISL + p \lor \nnot \nabla p$.
\end{enumerate}

\end{theorem}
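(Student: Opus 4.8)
The plan is to prove the three items by the same strategy: first identify the relevant reduced matrices, then reduce everything to the smallest algebra in the list (as in Proposition~\ref{prop:onemat}), and finally apply Proposition~\ref{proofbyvals} to verify the proposed relative axiomatization. Throughout, I will use that $\ISL=\Log\la\alg{IS}_6,\uparrow\!a\ra$ and that every reduced model of $\ISL$ is of the form $\la\alg{A},F\ra$ with $\alg{A}\in\ISA$ and $F$ a lattice filter.

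For item (i): the matrices $\la\alg{IS}_k,\{\ttop\}\ra$ for $k\in\{3,4,5,6\}$ are all \emph{non}-reduced (the congruence collapsing $\{0,a,b,1\}$ is compatible with $\{\ttop\}$, cf.\ the proof of Proposition~\ref{prop:nonprot}), and each reduces to $\la\alg{IS}_3,\{\ttop\}\ra$; since the latter is a submatrix of each of the others, all four define the same logic, namely $\vdash^1_{\ISA}$ by (the proof of) Proposition~\ref{prop:quas} together with $\la\alg{IS}_6,\{1\}\ra^*=\la\alg{IS}_3,\{\ttop\}\ra$. It remains to show $\Log\la\alg{IS}_3,\{\ttop\}\ra=\ISL+(p\vdash\nnot\nabla\nnot p)$. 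The rule is sound on $\la\alg{IS}_3,\{\ttop\}\ra$ because the only designated value is $\ttop$ and $\nnot\nabla\nnot\ttop=\nnot\nabla\bbot=\nnot\bbot=\ttop$. For completeness I would use Proposition~\ref{proofbyvals}(ii): $\Val^{\R}_{\Mt^\omega}$ with $\Mt=\la\alg{IS}_6,\uparrow\!a\ra$ is a complete semantics for $\ISL+\R$; a valuation $v$ into a power of $\alg{IS}_6$ respecting the rule $p\vdash\nnot\nabla\nnot p$ in the sense of $\Val^{\R}_{\Mt^\omega}$ must send every formula into the set of coordinates-wise elements $c$ with $c=\ttop$ whenever $c$ is designated, i.e.\ into $\{\bbot,\ttop\}^\omega\cup(\text{non-designated part})$; chasing the definitions, such valuations factor through the submatrix $\la\alg{IS}_3,\{\ttop\}\ra$ up to the kernel, giving the desired inclusion.

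For item (iii): dually, the matrices $\la\alg{IS}_k,\uparrow\!0\ra$ all reduce to $\la\alg{IS}_3,\uparrow\!0\ra$ (the same congruence is compatible with $\uparrow\!0=\{0,a,b,1,\ttop\}$ in $\alg{IS}_6$), which is a submatrix of each, so all four define the same logic. For the relative axiomatization $\ISL+(p\lor\nnot\nabla p)$, soundness on $\la\alg{IS}_3,\uparrow\!0\ra$ is immediate since for every $x$ one of $x,\nnot\nabla x$ lies in $\uparrow\!0$ (if $x\neq\bbot$ then $x\geq 0$; if $x=\bbot$ then $\nnot\nabla\bbot=\ttop$). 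Completeness again goes through Proposition~\ref{proofbyvals}(i) applied to $\Mt=\la\alg{IS}_6,\uparrow\!a\ra$: a valuation in $\Val^{\Ax}_{\Mt}$ validating $p\lor\nnot\nabla p$ cannot take the value $a$ or $b$ (since $a\notin\uparrow\!a$ would be needed... ) — more precisely, since $\uparrow\!a$ is the designated set, $v(p)\lor v(\nnot\nabla p)\in\uparrow\!a$ forces $v(p)\notin\{\bbot\}$ and in fact restricts the image of $v$ to $\{\bbot,0,1,\ttop\}$, i.e.\ $v$ factors through the submatrix $\la\alg{IS}_4,\uparrow\!a\ra$, whose logic I would separately check equals $\Log\la\alg{IS}_4,\uparrow\!0\ra$ via the reduction argument above. (One must be slightly careful: $\la\alg{IS}_4,\uparrow\!a\ra$ has designated set $\uparrow\!a=\{a,1,\ttop\}$ inside $\alg{IS}_4=\{\bbot,0,1,\ttop\}$, which contains no $a$, so $\uparrow\!a=\uparrow\!1$ there; this identification is exactly the point.)

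For item (ii): here the target is $\Log\{\la\alg{IS}_3,\{\ttop\}\ra,\la\alg{IS}_3,\uparrow\!0\ra\}=\vdash^{\leq}_{\VV(\alg{IS}_3)}$, the order-preserving logic of the three-element \L ukasiewicz--Moisil variety, which by the remark following the $\vdash^{\leq}_{\V}$ definition in Section~\ref{sec:prel} is the logic of all matrices $\la\alg{A},F\ra$ with $\alg{A}\in\VV(\alg{IS}_3)$ and $F$ a lattice filter — and on the chain $\alg{IS}_3$ the only proper lattice filters are $\{\ttop\}$ and $\uparrow\!0$. So the first equality is the general fact about order-preserving logics specialized to a three-element chain. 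For the relative axiomatization $\ISL+\eqref{Eqn:Kleq}+(\nnot p\lor r,\nabla p\lor r\vdash p\lor r)+(\nnot\nabla p\lor r\vdash\nnot p\lor r)$, soundness of each rule on both matrices $\la\alg{IS}_3,\{\ttop\}\ra$ and $\la\alg{IS}_3,\uparrow\!0\ra$ is a short finite check (for the second rule: if $\nnot x$ and $\nabla x$ are both designated then $\nabla x=\ttop$ so $x\neq\bbot$, and $\nnot x$ designated with $x\neq\bbot$ forces $x\in\{0\}$ in the $\{\ttop\}$ case or $x\leq$ something in the $\uparrow\!0$ case, whence $x$ designated; the disjoined form with $r$ follows). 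For completeness I would again invoke Proposition~\ref{proofbyvals}, this time combining (i) for the axiom-like rule \eqref{Eqn:Kleq}-via-$r$ and (ii) for the genuine rules, starting from $\Mt=\la\alg{IS}_6,\uparrow\!a\ra$: the three extra rules together force a respecting valuation to avoid the values $a,b$ and $1$... more carefully, to take values only in $\{\bbot,0,\ttop\}\cong\alg{IS}_3$, and \eqref{Eqn:Kleq} then cuts the De Morgan part down to the Kleene three-element chain, so the valuation factors through one of the two matrices $\la\alg{IS}_3,\{\ttop\}\ra$, $\la\alg{IS}_3,\uparrow\!0\ra$ according to which filter its designated preimage determines.

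\textbf{Main obstacle.} The routine soundness checks and the reduction-to-$\alg{IS}_3$ arguments are mechanical. The real work is in item (ii): showing that the \emph{three} added rules jointly carve out exactly the valuations factoring through $\alg{IS}_3$ with a \emph{lattice filter} as designated set — i.e.\ that no further rule is needed to separate $\{\ttop\}$ and $\uparrow\!0$ from each other or from the non-filter subsets. This requires a careful coordinate-wise analysis of $\Val^{\R}_{\Mt^\omega}$ and an argument that every such valuation's image, intersected with the designated set, is a lattice filter on the (sub)chain it lands in — which in turn is where the specific shape of the rule $\nnot\nabla p\lor r\vdash\nnot p\lor r$ (controlling how $\nnot$ interacts with the new top/bottom) does its job. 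I expect this verification, together with pinning down precisely which submatrix of $\la\alg{IS}_6,\uparrow\!a\ra$ each rule-respecting valuation factors through, to be the technically delicate part; everything else follows the template already established by Propositions~\ref{prop:onemat},~\ref{prop:quas} and~\ref{proofbyvals}.
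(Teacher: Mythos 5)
Your item (i) is essentially the paper's own argument (identify the reductions with $\la \alg{IS}_3, \{\ttop\}\ra$, then use Proposition~\ref{proofbyvals}(ii) and the observation that $p \vdash \nnot\nabla\nnot p$ forces designated values to be the top), so that part is fine, modulo the loose ``factors through $\alg{IS}_3$'' phrasing. Item (iii), however, contains a genuine error: evaluating the axiom $p \lor \nnot\nabla p$ in $\la \alg{IS}_6, \uparrow\! a\ra$, at $x=a$ one gets $a \lor \nnot\nabla a = a \lor \bbot = a \in\ \uparrow\! a$, so the value $a$ is \emph{not} excluded; what the axiom excludes are $0$ and $b$ (both give an undesignated value), and hence $1$ indirectly (since $\nnot\phi$ would take value $0$). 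So a rule-respecting valuation lands in the subalgebra $\{\bbot, a, \ttop\} \cong \alg{IS}_3$, not in $\{\bbot,0,1,\ttop\} = \alg{IS}_4$ as you claim. Your route would identify the logic with $\Log\la \alg{IS}_4, \uparrow\! 1\ra = \mathcal{CL}^\nabla$, which is strictly stronger than the target $\Log\la \alg{IS}_3, \uparrow\! 0\ra$: for instance \eqref{Eqn:DS} holds in the former but fails in the latter (send $p$ to the middle element and $q$ to $\bbot$). The corrected computation is exactly what the paper does, so the strategy is salvageable, but the step as written fails and the ``separate check'' you postpone ($\Log\la\alg{IS}_4,\uparrow\!1\ra = \Log\la\alg{IS}_4,\uparrow\!0\ra$) is false, since the first matrix is reduced and the second reduces to $\la\alg{IS}_3,\uparrow\!0\ra$.

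For item (ii) you only sketch a plan and explicitly defer the completeness verification (your ``main obstacle''), so nothing is actually proved there; note also that $\{\bbot,0,\ttop\}$ is not a $\nnot$-closed subset of $\alg{IS}_6$ (the middle element of $\alg{IS}_3$ corresponds to the negation fixpoint $a$), and that \eqref{Eqn:Kleq} is a proper rule, so Proposition~\ref{proofbyvals}(i) cannot be invoked for it. The paper closes this gap by an entirely different mechanism: it takes the $\mathcal{S}$-analytic multiple-conclusion axiomatization of $\Log\{\la\alg{IS}_3,\{\ttop\}\ra, \la\alg{IS}_3,\uparrow\!0\ra\}$ given by the rules $\mathsf{s}_1$--$\mathsf{s}_{21}$ of Example~\ref{ex:2with3}, converts it into a single-conclusion calculus via Theorem~\ref{disjax}, and then observes that all of the resulting rules $\mathsf{s}^\lor_i$ except $\mathsf{s}^\lor_{15}$, $\mathsf{s}^\lor_{16}$ and $\mathsf{s}^\lor_{20}$ already hold in $\la\alg{IS}_6,\uparrow\! a\ra$, so exactly the three rules in the statement remain as the relative axiomatization. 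Either that argument, or a genuinely carried-out analysis of rule-respecting valuations over the $\omega$-power (tracking which lattice filter the designated values determine), is the missing ingredient; as it stands, your item (ii) is a proof outline rather than a proof.
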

\begin{proof}
 (i). For the equalities 
$
\Log(\la \alg{IS}_6, \{ \ttop \} \ra) 
= \Log(\la \alg{IS}_5, \{ \ttop \} \ra) 
= \Log(\la \alg{IS}_4, \{ \ttop \} \ra )
= \Log(\la \alg{IS}_3, \{ \ttop \} \ra)
$,
it suffices to observe that
$
\la \alg{IS}_6, \{ \ttop \} \ra^*  
=  \la \alg{IS}_5, \{ \ttop \} \ra^* 
=  \la \alg{IS}_4, \{ \ttop \} \ra^* 
=  \la \alg{IS}_3, \{ \ttop \} \ra 
$.
%
Consider $\Mt=\tuple{\alg{IS}_6, \uparrow \!  b}^\omega$ and $\R=\{p \vdash \nnot \nabla \nnot p\}$.
Note for every $v\in \Val_{\Mt^\omega}$ we have that $v(\nnot\nabla\nnot p)\in D$ iff $v(p)=\{\ttop\}^\omega$.
Hence, from $v\in \Val_{\Mt^\omega}^\R$ we have that $v(A)\in D^\omega$ iff $v(A)=\{1\}^\omega$.
As $\R$ is sound in $\tuple{\alg{IS}_6, 1}^\omega$ we obtain the equality $\Val_{\Mt^\omega}^\R=\Val_{\tuple{\alg{IS}_6, 1}^\omega}$,
thus $\ISL+p \vdash \nnot \nabla \nnot p=\Log(\tuple{\alg{IS}_6, 1}^\omega)=\Log(\tuple{\alg{IS}_6, 1})$.


(ii). That 
$\Log(\{ \la \alg{IS}_3, \{ \ttop \} \ra, \la \alg{IS}_3, \uparrow \!  0 \ra \}) $
is the order-preserving logic of the variety $\VV (\alg{IS}_3) $
follows from the  observation that ${\vdash^{\leq}_{\VV (\K)}} = {\vdash^{\leq}_{\K}} $ holds for any class $\K$.
%
Applying Theorem~\ref{disjax} to 
the multiple-conclusion axiomatization 
for $\Log(\{ \la \alg{IS}_3, \{ \ttop \} \ra, \la \alg{IS}_3, \uparrow \!  0 \ra \})$ presented in Example~\ref{ex:2with3} in the following Section, we conclude that
collecting all the $\mathsf{s}_i^\lor$ for $1\leq i \leq 21$ provides a single-conclusion axiomatization of $\Log(\{ \la \alg{IS}_3, \{ \ttop \} \ra, \la \alg{IS}_3, \uparrow \!  0 \ra \})$.
The latter equality in the statement follows as the new rules, corresponding to $\mathsf{s}^\lor_{15}$, $\mathsf{s}^\lor_{16}$ and $\mathsf{s}^\lor_{20}$,  are exactly ones that fail in
  $\tuple{\alg{IS}_6,\uparrow\! a}$.
  %

(iii). For the equalities 
$\Log(\la \alg{IS}_6, \uparrow \!  0 \ra) = \Log(\la \alg{IS}_5, \uparrow \!  0 \ra) = \Log(\la \alg{IS}_4, \uparrow \!  0 \ra) =
  \Log(\la \alg{IS}_3, \uparrow \!  0 \ra )$,
  it suffices to observe that
$
 \la \alg{IS}_6, \uparrow \!  0 \ra^* 
=  \la \alg{IS}_5, \uparrow \!  0 \ra^* 
=  \la \alg{IS}_4, \uparrow \!  0 \ra^* =
   \la \alg{IS}_3, \uparrow \!  0 \ra
$.
Let $\R=\{p \lor \nnot \nabla p\}$.
In order to obtain the last equality, in the light of Proposition~\ref{proofbyvals}~{(i)}, it is enough to show that $\Val_{\tuple{\alg{IS}_3, \uparrow 0}}=\Val^\R_{\tuple{ \alg{IS}_6, \uparrow a}}$.
This follows from the fact that given $v\in \Val^\R_{\tuple{ \alg{IS}_6, \uparrow  a}}$ we must have that for every formula $\phi$ both $v(\phi)\neq b$ and $v(\phi) \neq 0$ (therefore also $v(\phi) \neq 1$), and the fact that $\R$ is sound w.r.t.~$\tuple{\alg{IS}_3, \uparrow \!  0}$.
\end{proof}

\subsection{Analtytic calculi}
\label{ss:anal}

Let $\Lambda\subseteq Fm$
and let $\R$ be a set of multiple-conclusion rules.
We write\footnote{
Note that 
in general 
$\derm^\Lambda_\R$ is not a multiple-conclusion consequence relation. It still satisfies dilution and cut for set properties, but only weaker versions of overlap and substitution invariance.}
 $\Gamma\derm^\Lambda_\R\Delta$ when there exists an $\R$-proof of $\Delta$ from $\Gamma$ 
 where only formulas in~$\Lambda$ occur. 
Let $\Phi\subseteq Fm$. We say that $\R$ is \emph{$\Phi$-analytic} if 
when $\Gamma\derm_\R\Delta$ then 
$\Gamma\derm^{\Upsilon_\Phi}_\R\Delta$
with $\Upsilon=\sub(\Gamma\cup \Delta)$ and $\Upsilon_\Phi=\Upsilon\cup\{A^\sigma : A\in \Phi, \sigma:P\to \Upsilon\}$.
Intuitively, this means that an $\R$-proof of $\Delta$ from $\Gamma$ needs only to use formulas which are subformulas of $\Gamma\cup \Delta$, or instances of $\Phi$ with such subformulas. Hence, formulas in $\Upsilon_\Phi$ can be seen as 
`generalized subformulas'.  

Given $\Phi\subseteq Fm$, let $\Phi^\nabla: =\Phi\cup\{\nabla p, \nnot \nabla p, \nabla \nnot p, \nnot \nabla \nnot p \}$.
The Theorem below is a refinement of Theorem~\ref{nablaax} that applies 
when we depart from calculus that is  analytic, entailing that 
the operation described in Theorem~\ref{nablaax}
preserves analyticity.

\begin{theorem}\label{analytic2de3}
 Let  $\CMt$ be a class of $\Sigma$-matrices.
If $\R$ is an $\Phi$-analytic axiomatization of $\derm_{\widehat{\CMt}}$  
 then $\R\cup \R_\nabla$ is an $\Phi^\nabla$-analytic axiomatization of
 $\derm_{\CMt^\nabla}$.
\end{theorem}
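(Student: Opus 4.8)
The plan is to build directly on the proof of Theorem~\ref{nablaax}, adding analyticity bookkeeping at each step. Soundness of $\R \cup \R_\nabla$ with respect to $\derm_{\CMt^\nabla}$ is already established in Theorem~\ref{nablaax}, and that argument does not interact with analyticity; so the only thing to prove is that whenever $\Gamma \derm_{\R \cup \R_\nabla} \Delta$, there is a proof using only formulas in $\Upsilon_{\Phi^\nabla}$, where $\Upsilon = \sub(\Gamma \cup \Delta)$. First I would recall the completeness argument from Theorem~\ref{nablaax}: assuming $\Gamma \not\derm_{\R\cup\R_\nabla} \Delta$, one uses cut for sets to obtain a partition $\tuple{T,F}$ of $Fm_{\Sigma^\nabla}$ with $\Gamma \subseteq T$, $\Delta \subseteq F$, $T \not\derm_{\R\cup\R_\nabla} F$; then $T \not\derm_\R F$, and $\Phi$-analyticity of $\R$ together with $\derm_\R \simeq \derm_{\widehat\CMt}$ produces a homomorphism $v$ into some $\widehat\Mt$ separating $T$ from $F$; finally one lifts $v$ to a valuation $v'$ on $\Mt^\nabla$ via the explicit case definition. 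The key point — already flagged in the remark at the end of the proof of Theorem~\ref{nablaax} — is that verifying $v' \in \Val(\Mt^\nabla)$ on a given formula $\xi$ only ever invokes instances of rules in $\R_\nabla$ whose formulas lie in $\sub(\xi)^{\Phi^\nabla}$, i.e.\ among the `generalized subformulas' of $\xi$ built from $\Phi^\nabla$.

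The core of the argument is the contrapositive/proof-theoretic reformulation: I would prove that if $\Gamma \not\derm^{\Upsilon_{\Phi^\nabla}}_{\R\cup\R_\nabla} \Delta$, then $\Gamma \not\derm_{\R\cup\R_\nabla}\Delta$, since the converse inclusion $\derm^{\Upsilon_{\Phi^\nabla}}_{\R\cup\R_\nabla} \subseteq \derm_{\R\cup\R_\nabla}$ is trivial. So suppose there is no proof of $\Delta$ from $\Gamma$ confined to $\Lambda := \Upsilon_{\Phi^\nabla}$. Applying cut for sets \emph{relativised to $\Lambda$} (which $\derm^\Lambda_\R$ still enjoys, per the footnote), I get a partition $\tuple{T_0, F_0}$ of $\Lambda$ with $\Gamma \subseteq T_0$, $\Delta \subseteq F_0$ and $T_0 \not\derm^\Lambda_{\R\cup\R_\nabla} F_0$. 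In particular $T_0 \not\derm^{\Upsilon_\Phi}_\R F_0$ as well (on the $\Sigma$-fragment), so by $\Phi$-analyticity of $\R$ and the fact that $\R$ axiomatizes $\derm_{\widehat\CMt}$ up to sign, there is $\Mt \in \CMt$ and $v \in \mathsf{Hom}_\Sigma(Fm_{\Sigma^\nabla}, \widehat\Mt)$ with $v(T_0 \cap Fm_\Sigma) \subseteq D$ and $v(F_0 \cap Fm_\Sigma) \cap D = \emptyset$. Next I would observe that $\Lambda$ is closed under the subformula and $\Phi^\nabla$-instantiation operations that the case-definition of $v'$ in Theorem~\ref{nablaax} actually uses: for any $\xi \in \Lambda$, the formulas $\nabla\xi$, $\nnot\nabla\xi$, $\nabla\nnot\xi$, $\nnot\nabla\nnot\xi$ and $\nnot\nabla\nnot\xi_i$, $\nnot\nabla\xi_i$ for immediate subformulas $\xi_i$ of $\xi$ all lie in $\Upsilon_{\Phi^\nabla}$ by construction of $\Phi^\nabla$. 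This lets me run the $v'$-construction of Theorem~\ref{nablaax} verbatim on the restricted partition $T_0$, and — invoking the closing remark of that proof — conclude $v' \in \Val(\Mt^\nabla)$, with $v'(\Gamma) \subseteq D \cup \{\ttop\}$ and $v'(\Delta) \cap (D \cup \{\ttop\}) = \emptyset$, hence $\Gamma \not\derm_{\Mt^\nabla}\Delta$ and a fortiori $\Gamma\not\derm_{\CMt^\nabla}\Delta = \Gamma \not\derm_{\R\cup\R_\nabla}\Delta$.

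The main obstacle, and the point I would spend the most care on, is verifying that the $v'$-construction genuinely stays inside $\Lambda$ — i.e.\ that one never needs to decide the $T_0$-membership of a formula lying outside $\Upsilon_{\Phi^\nabla}$. Concretely, $v'(\xi)$ is defined by a case split on which of $\nabla\xi,\nnot\nabla\xi,\nabla\nnot\xi,\nnot\nabla\nnot\xi$ belongs to $T_0$, and on the shape of $\xi$; for each such $\xi \in \Upsilon$ these test-formulas are in $\Upsilon_{\Phi^\nabla}$, so the partition $\tuple{T_0,F_0}$ of $\Lambda$ already decides them. The subtlety is that $\Phi$-instances occurring in $\Upsilon_\Phi$ may themselves be compound, so I must check $\Upsilon_{\Phi^\nabla}$ is closed under the relevant one-step operations on \emph{those} formulas too; this is why $\Phi^\nabla$ is defined by adding the four $\nabla$-decorations of a single variable $p$ rather than anything more elaborate — substitution-closure then handles the rest. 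I would also need to double-check that the relativised cut-for-sets and the fragment-restriction argument cohere with the sign equivalence $\simeq$ (i.e.\ that $F_0 \neq \emptyset$, which follows exactly as in Theorem~\ref{nablaax} from $\mathsf{r}_1,\mathsf{r}_4$), so that the appeal to $\Phi$-analyticity of $\R$ via $\derm_\R \simeq \derm_{\widehat\CMt}$ is legitimate. Once these closure facts are in place the rest is a transcription of the earlier proof.
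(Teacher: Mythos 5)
Your proposal follows essentially the same route as the paper's own proof: assume $\Gamma\not\derm^{\Lambda}_{\R\cup\R_\nabla}\Delta$ for $\Lambda=\Upsilon_{\Phi^\nabla}$, apply cut for sets relativised to $\Lambda$ to get a partition $\tuple{T,F}$, use the $\Phi$-analyticity of $\R$ (together with $\Upsilon_\Phi\subseteq\Upsilon_{\Phi^\nabla}$) to obtain a separating $\Sigma$-valuation $v$ into some $\widehat{\Mt}$, and then re-run the $v'$-construction of Theorem~\ref{nablaax} on $\Upsilon$, observing that every rule instance needed lies in $\Lambda$, before extending the resulting partial valuation to a total one on $\Mt^\nabla$. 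The only cosmetic deviation is that the paper keeps the separation $v(T)\subseteq D$, $v(F)\cap D=\emptyset$ for the whole of $T,F\subseteq\Lambda$ (with $\nabla$-rooted formulas treated as extra atoms), rather than restricting it to the $\Sigma$-fragment as you state, and the full version is what the $v'$-construction and the final separation of $\Gamma$ from $\Delta$ actually use.
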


\begin{proof}
The proof can be easily obtained by adapting the proof of Theorem~\ref{nablaax}.
Let 
 $\Upsilon=\sub(\Gamma\cup \Delta)$ and  
 $\Lambda=
\Upsilon_{\Phi^\nabla}$.
 Assume  $\Gamma\not\derm_{\R\cup \R_\nabla}^{\Lambda}\Delta$. 
 Then, by cut for sets, there is a partition $\tuple{T,F}$ of $\Lambda$ such that $\Gamma\subset T$, $\Delta\subset F$ and $T\not\derm_{\R\cup \R_\nabla}^{\Lambda}F$.
  Since $\R\subseteq \R\cup \R_\nabla$, we know that  $T\not\derm_{\R\cup \R_\nabla}^{\Lambda}F$. Therefore, since 
  $\Upsilon_{\Phi}\subseteq \Upsilon_{\Phi^\nabla}$, 
 by $\Phi$-analyticity 
  of $\R$ we have that 
 %
 %
 $T\not\derm_{\widehat{\CMt}}F$ and we can
 pick $v\in \mathsf{Hom}_{\Sigma}(L_{\Sigma^\nabla},\widehat{\Mt})$
 for some $\Mt\in \CMt$ such that $v(T)\subseteq D$ and $v(F)\cap D=\emptyset$. 
Noting that for every $A\in \Upsilon$ we have $\nabla A, \nabla \nnot A, \nnot \nabla A, \nnot \nabla \nnot A\in \Upsilon_{\mathcal{S}_\nabla}=\Lambda$, we can define 
$v':\Upsilon\to \Mt^\nabla$ as in Theorem~\ref{nablaax}. 
That $v'$ respects all the connectives (and is therefore a partial $\Mt^\nabla$ valuation)
follows from the fact that in the proof of Theorem~\ref{nablaax} we only used instances of the rules using formulas in $\Upsilon$ yielding formulas in $\Upsilon_{\mathcal{S}_\nabla}=\Lambda$. 
As $\Mt$ is a matrix, $v'$ can be extended to a total valuation and therefore $\Gamma\not\derm_{\CMt^\nabla}\Delta$, thus concluding the proof.
\end{proof}

The papers \cite{CaMa19,CaMaXX} 
 introduced a general method for obtaining analytic calculi for logics given by (partial non-deterministic) matrices 
whenever a certain expressiveness requirement is met. 
In particular,
for the logic determined by a single 
matrix
$\Mt = \la \A, D \ra$, it suffices that  
$\Mt$ be \emph{monadic}~\cite[p.~265]{ShSm78}. This means that,
for all $x,y \in A$ with $x\neq y$, there is a one-variable separating formula, that is,  a formula $\phi(p)$ such that $\phi(x)\in D$ and $\phi(y)\notin D$ (or vice versa). 

From now on, let us fix the separating set 
$\mathcal{S}: =\{p,\nnot p\}$.
Applying the above-described method, we obtain the following axiomatization for
$\B$.



\begin{example}
The matrix $\tuple{\alg{DM}_4,\uparrow\! a}$ is monadic with set of separators $\mathcal{S}$.
We can therefore apply the method introduced in~\cite{CaMaXX} to we obtain the following
$\mathcal{S}$-analtytic axiomatization for $\B=\Log \tuple{\alg{DM}_4,\uparrow\! a} $:

 $$
 \frac{p}{\nnot\nnot p}\qquad
  \frac{\nnot\nnot p}{p}
 $$

$$
\frac{p\land q}{p}\quad
\frac{p\land q}{q}\quad
 \frac{p,q}{p\land q}\qquad
 \frac{\nnot p}{\nnot(p \land q)}\quad
\frac{\nnot q}{\nnot(p \land q)}\quad \frac{\nnot(p \land q)}{\nnot p, \nnot q}
$$

 $$
 \frac{p}{ p \lor q}\qquad
  \frac{q}{ p \lor q}\qquad
  \frac{p\lor q}{p\,,\,q}\quad
   \frac{\nnot p\,,\,\nnot q}{\nnot (p\lor q)}
   \quad
       \frac{\nnot (p\vee q)}{\nnot p}
 \quad
    \frac{\nnot (p\vee q)}{\nnot q}
 $$
 
 $$\frac{}{\top}\qquad \frac{\nnot\top}{}\qquad\frac{}{\nnot\bot}\qquad \frac{\bot}{}$$
 

 Note that this axiomatization coincides with the one presented in \cite[Section 9]{Adam}.
 Theorem~\ref{analytic2de3} then tells us that we can obtain an $\mathcal{S}_\nabla$-analtytic axiomatization of $\ISL$ by adding $\R_\nabla$ to the above rules.
\end{example}

%




\begin{example}\label{ex:2with3}

In~\cite[Example 5]{CaMa19} we showed 
that the following rules provide an $\mathcal{S}$-analtytic axiomatization of Kleene's logic of order
$\mathcal{K}_{\leq} = \Log \{ \la \alg{K}_3, \{ 1\} \ra, \la \alg{K}_3, \uparrow \!  a \ra \}  $.

$$\frac{p\,,\, q}{\;p\land q\;}\ _{\mathsf{s}_1}\quad 
 \frac{\;p\land q\;}{p}\ _{\mathsf{s}_2}\quad \frac{\;p\land q\;}{q}\ _{\mathsf{s}_3} 
 \quad \frac{\nnot p}{\;\nnot(p\land q)\;}\ _{\mathsf{s}_4}  \quad \frac{\nnot q}{\;\nnot(p\land q)\;}\ _{\mathsf{s}_5}
 \quad  \frac{\;\nnot(p\land q)\;}{\;\nnot p\,,\, \nnot q\;}\ _{\mathsf{s}_6}$$

   $$\frac{p}{\;p\lor q\;}\ _{\mathsf{s}_7}\quad \frac{q}{\;p\lor q\;}\ _{\mathsf{s}_8}\quad 
   \frac{\;\nnot(p\lor q)\;}{\nnot p}\ _{\mathsf{s}_9}
   \quad \frac{\;\nnot(p\lor q)\;}{\nnot q}\ _{\mathsf{s}_{10}}
  \quad   
   \frac{\nnot p\,,\,\nnot q}{\;\nnot(p\lor q)\;}\ _{\mathsf{s}_{11}} \quad
   \frac{\;p\lor q\;}{\;p\,,\, q\;}\ _{\mathsf{s}_{12}}
  $$ 
  
  $$\quad\frac{p}{\;\nnot \nnot p\;}\, _{\mathsf{s}_{13}}\quad \frac{\;\nnot \nnot p\;}{p}\ _{\mathsf{s}_{14}}\quad {
  \frac{\;p\,,\, \nnot p\;}{q\,,\,\nnot q}\ _{\mathsf{s}_{15}}} $$

Since $\{ \la \alg{IS}_5,  \uparrow \!  1 \ra, \la \alg{IS}_5, \uparrow \!  a \ra \}=\{ \la \alg{K}_3, \{ 1\} \ra, \la \alg{K}_3, \uparrow \!  a \ra \}^\nabla$, 
by Theorem~\ref{analytic2de3}, we have that  adding  $\R_\nabla$ to the above rules gives us a $\mathcal{S}_\nabla$-analtytic axiomatization of
$\Log \{ \la \alg{IS}_5,\uparrow \!  1 \ra, \la \alg{IS}_5, \uparrow \!  a \ra \} $. 

The method of~\cite{CaMaXX} can also be applied directly
to obtain an  $\mathcal{S}$-analtytic axiomatization of  
$ {\vdash^{\leq}_{\VV (\Al[IS]_3)}}  = \Log \{ \la \alg{IS}_3, \{ \ttop \} \ra, \la \alg{IS}_3, \uparrow \!  0 \ra \} $.
Indeed, the $\nabla$-free fragment of $\Log \{ \la \alg{IS}_3, \{ \ttop \} \ra, \la \alg{IS}_3, \uparrow \!  a \ra \}$ 
is 
$\Log \{ \la \alg{K}_3, \{ 1 \} \ra, \la \alg{K}_3, \uparrow \!  a \ra \} $. The latter set of matrices can be viewed as a partial matrix~\cite[Section 2.2]{CaMaXX} 
and is monadic with separating set $\mathcal{S}$ (in which $\nabla$ does not occur). Therefore, the modularity of the method of~\cite{CaMaXX} tells us we just have to add the rules corresponding to $\nabla$. 
 Hence, it suffices to add  the rules:

  $${
  \frac{\nnot p\,,\,\nabla p}{p}\ _{\mathsf{s}_{16}}}\qquad
  {
  \frac{\nnot p}{p\,,\,\nnot\nabla p} \ _{\mathsf{s}_{17}}}\qquad \frac{p}{\nabla p}\ _{\mathsf{s}_{18}} \qquad
    \frac{p\,,\,\nnot\nabla p}{} \ _{\mathsf{s}_{19}}\qquad \frac{\nnot \nabla p}{\nnot p}\ _{\mathsf{s}_{20}}\qquad \frac{}{
    \nnot p, \nabla p}\ _{\mathsf{s}_{21}}$$
which give us 
the single-conclusion axiomatization relative to $\ISL$ mentioned in Theorem~\ref{th:axnotinshape} (ii).
\end{example}

\section{Conclusions and future work}
\label{sec:conc}

As we have shown,  the lattice of super-Belnap logics is embeddable in the lattice of extensions of $\ISL$.
This connection provides significant insight, but it also 
suggests
that
 fully describing the latter is at least as complex as describing the former, 
whose structure is still not completely understood (see~\cite{AlPrRi16}).
Obviously,   in the present study
 we have  only scratched the surface  of the general problem.
 A reasonable starting point for a systematic account of the extensions of $\ISL$ is to adapt 
 the various results and strategies in~\cite{Ri12b,AlPrRi16,Adam} to the richer setting of involutive Stone algebras.
We mention, in particular, the issues of 
characterizing the reduced models of extensions of $\ISL$,
and that of 
providing a general semantical description 
of the \emph{explosive extensions} (in the sense of~\cite{AlPrRi16,Adam}) of logics over $\ISL$.


An altogether different perspective on extensions of $\ISL$, which has not been considered in the present paper,
comes from the  observation made in~\cite[Sec.~6]{CaFi18} that $\ISL$ may be viewed as a 
paraconsistent logic, more precisely as a 
\emph{Logic of Formal Inconsistency} (LFIs) in the sense of N.~da Costa~\cite{dCo63}.
Indeed, $\ISL$ (and so its extensions) can be equivalently presented in a language that replaces the $\nabla$ connective
with either the consistency operator $(\circ)$ or the inconsistency operator $(\bullet)$
that are usually considered in the literature on LFIs.
One possible definition is $ \nabla \phi : = \nnot \circ \phi \lor \phi$, and, conversely,
one may define 
$\circ \phi : = \nnot \nabla ( \phi \land \nnot  \phi) $ and
$\bullet \phi : = \nabla (\phi \land \nnot \phi) $.

From a philosophical logic point of view, the advantage of the latter presentation
is that the operators $\circ$ and $\bullet$ have a clearer logical interpretation
than $\nabla$, namely, $\circ \phi$ means `$\phi$ is consistent' and 
$\bullet \phi$ means `$\phi$ is inconsistent'; 
%
on the other hand, $\nabla$ 
behaves very well from the points of view of algebraic logic and duality theory, 
for it satisfies the usual axioms for modal operators.
A more interesting observation is that, in the setting of LFIs,
 the (in)consistency operators are 
usually required to satisfy much weaker axioms
than those that result
 from the definitions $\circ \phi : = \nnot \nabla ( \phi \land \nnot  \phi) $ and
$\bullet \phi : = \nabla (\phi \land \nnot \phi) $ within  $\ISL$. 
This suggests a potentially fruitful  project for future research: namely,
a systematic study of more general algebraic structures 
(e.g.~De Morgan algebras endowed with a consistency operator)
corresponding to weaker logics (viewed as LFIs) that approximate 
$\ISL$ from below. 

\section*{Compliance with ethical standards}

Funding: Research funded by FCT/MCTES through national funds and when applicable co-funded by EU under the project UIDB/EEA/50008/2020
and by the Conselho Nacional de Desenvolvimento Científico e Tecnol\'ogico (CNPq, Brazil), under the grant 313643/2017-2 (Bolsas de Produtividade em Pesquisa - PQ).

Conflict of Interest:  The authors declare that they have no conflict of interest.

Ethical approval: This article does not contain any studies with human participants or animals.



\end{document}